\DeclareSymbolFont{calletters}{OMS}{cmsy}{m}{n}
\DeclareSymbolFontAlphabet{\mathcal}{calletters}
\def\be{\begin{eqnarray}}
\def\ee{\end{eqnarray}}
\def\b*{\begin{eqnarray*}}
\def\e*{\end{eqnarray*}}
\newtheorem{Theorem}{Theorem}[section]
\newtheorem{Definition}[Theorem]{Definition}
\newtheorem{Proposition}[Theorem]{Proposition}
\newtheorem{Assumption}[Theorem]{Assumption}
\newtheorem{Lemma}[Theorem]{Lemma}
\newtheorem{Remark}[Theorem]{Remark}
\newtheorem{Example}[Theorem]{Example}
\makeatletter \@addtoreset{equation}{section}
\newcommand{\bea}{\begin{eqnarray}}
\newcommand{\bes}{\begin{subequations}}
\newcommand{\ees}{\end{subequations}}
\newcommand{\bgt}{\begin{gather}}
\newcommand{\egt}{\begin{gather}}
\newcommand{\eea}{\end{eqnarray}}
\newcommand{\beaa}{\begin{eqnarray*}}
\newcommand{\eeaa}{\end{eqnarray*}}
\def \E{\mathbb{E}}
\def \P{\mathbb{P}}
\def \Q{\mathbb{Q}}
\def \R{\mathbb{R}}
\def \S{\mathbb{S}}
\def \M{\mathbb{M}}
\def \N{\mathbb{N}}
\def\Ac{{\cal A}}
\def\Bc{{\cal B}}
\def\Cc{{\cal C}}
\def\Lc{{\cal L}}
\def\Cb{\overline{C}}
\def \Om{\Omega}
\def \eps{\varepsilon}
\def \0{\mathbf{0}}
\def \vp{\varphi}
\def \x{\times}
\def\1{\mathbf{1}}
\def\xr{{\rm x}}
\def\vr{{\rm v}}
\def\CT{C([0,T])}
\def\DT{D([0,T])}
 \def\Ninfty#1{\|#1\|}    
 \def\Cb{{\mathbb C}}
  \def\vs#1{\vspace{2mm}}
 \def\Tr{{\rm Tr}}
 \def\bp{{\footnotesize{\boxplus}}}
\title{Approximate viscosity solutions   of path-dependent PDEs and Dupire's vertical differentiability}
\author{
Bruno Bouchard
\footnote{CEREMADE, Université Paris-Dauphine,  PSL, CNRS, 75016 Paris, France.  bouchard@ceremade.dauphine.fr. } 
\and
Gr\'egoire Loeper
\footnote{Monash University, School of Mathematical Sciences \& Centre for Quantitative Finance and Investment Strategies (CQFIS),  gregoire.loeper@monash.edu. 
The CQFIS has been supported by BNP Paribas.}
\footnote{CMAP, Ecole Polytechnique, Palaiseau, France.}
\footnote{BNP Paribas Global Markets.}
\and 
Xiaolu Tan
\footnote{Department of Mathematics, The Chinese University of Hong Kong. xiaolu.tan@cuhk.edu.hk. The research of Xiaolu Tan is supported by CUHK startup grant and CUHK Faculty of Science Direct Grant 2020- 2021.}
}
\begin{document}
\maketitle

\begin{abstract} 
	We introduce a notion of approximate viscosity solution for a class of nonlinear path-dependent PDEs (PPDEs),
	including the Hamilton-Jacobi-Bellman type equations.
	Existence, comparaison and stability results have been established  under fairly general conditions.
	It is also consistent with the notion of smooth solution when the dimension is less or equal to two, or the non-linearity is concave in the second order space derivative.
	We finally investigate the regularity (in the sense of Dupire) of the solution to the PPDE.
\end{abstract}

\section{Introduction}
	  
	The main objective of the paper is to study nonlinear path-dependent PDEs (PPDEs) of the form
	\begin{equation}\label{eq:PPDE_intro}
		-\partial_{t} \vr(t,\xr)
		-
		F\big(t,\xr,\vr(t,\xr),\nabla_{\xr}\vr(t,\xr),\nabla^{2}_{\xr}\vr(t,\xr) \big)
		=
		0,
		~~(t, \xr) \in \Theta,
	\end{equation}
	where $\Theta=[0,T] \x \CT$ or $\Theta = [0,T] \x \DT$,  $\CT$ being the space of all $\R^d$--valued continuous paths on $[0,T]$,
	and $\DT$ being the Skorokhod space of all $\R^d$--valued c\`adl\`ag paths on $[0,T]$. In the above, 
	$\partial_t \vr$ (resp.~$\nabla_{\xr} \vr$ and $\nabla_{\xr}^2 \vr$) represents the horizontal derivative (resp. first and second order vertical derivatives) of $\vr$ 
	in the sense of Dupire \cite{dupireito}, see also Section \ref{subsec:Comparisons} below.
	
	\vspace{0.5em}

	Extending classical PDEs,   PPDEs have various applications in the study of   dynamic stochastic systems  in non-Markovian settings (hedging/super-hedging   of path-dependent options \cite{dupireito},
	  path-dependent optimal control problems and stochastic differential games \cite{pham2014two}, etc.).
	Pioneering works, such as  \cite{dupireito, cont2013functional, peng2016bsde}, investigated  the existence of classical solutions,
	but it is generally very hard to obtain the required regularity of solutions, even for the most simple path-dependent heat equation.
	For this reason, much effort has been devoted to introduce appropriate weak notions of solution.

	\vspace{0.5em}

	In a first stream of literature, see e.g. \cite{ekren2014viscosity, ekren2016viscosity, ekren2016viscosity2, ren2017comparison}, the authors investigate a  relaxed notion of viscosity solutions for PPDEs.
	 As for the classical notion of viscosity solutions for PDEs on $\R^{d}$, see e.g. \cite{CrandallIshiiLions}, they consider families of smooth test functions and use the derivatives of the test functions to define the viscosity supersolution and subsolution properties.
	However, in contrast to the pointwise tangent property used in the classical setting, 
	their definition appeals to an optimality property with respect to a stopping problem under a nonlinear expectation. 
	For fully nonlinear equations, they require $F$ to be non-degenerate  as well as a strong uniform continuity conditions (which for instance are not satisfied in the case of linear equations with path-dependent coefficients).
	In \cite{ren2017comparison}, the non-degeneracy condition has been removed, but more restrictive continuity conditions on $F$ are needed.
	 More recently, \cite{cosso2019crandall}, followed by \cite{zhou2020viscosity, cosso2021path}, introduced a notion of Crandall-Lions viscosity solutions, and investigated the use of Gauge functions for HJB equations.

	\vspace{0.5em}
	
	Another approach consists in approximating the generator $F$ of the PPDE by a sequence $(F^{n})_{n \ge 1}$, for which strong existence holds, and then  define the limit of the corresponding sequence of solutions $(v^{n})_{n\ge 1}$ as a   solution of the original PPDE.
	This is the case of the notion of strong viscosity solution studied in \cite{cossorusso14}, according to which a function $\vr$ is called a strong viscosity solution of the PPDE with generator $F$ 
	if there exists a sequence $(v^{n}, F^{n})_{n \ge 1}$ that converges to  $(\vr, F)$ pointwise, 
	and each $v^{n}$ is a smooth solution to the PPDE with generator $F^{n}$.
	Existence and uniqueness of   strong viscosity solutions was proved for a class of parabolic semilinear PPDEs, which can be expected because of their link with BSDEs and the associated stability results.
	Finally, in \cite{ekren2016pseudo}, the authors introduced a notion of pseudo-Markovian viscosity solutions for fully nonlinear PPDEs by considering a sequence of PPDEs  with generators $F^{n}$ associated to paths frozen up to the exit time of certain domains. 
	In their case, existence in the fully nonlinear setting relies on the convergence of a suitable sequence of functions, which is a-priori not trivial.
	
	\vspace{0.5em}	
	
	Let us also mention the notion of Sobolev solutions investigated in \cite{peng2015g} for a class of PPDEs, which uses global properties to define a solution, in contrast to the local property required in the definition of classical solutions or viscosity solutions.	
	
	\vspace{0.5em}	
	
	In this paper, we   introduce a novel weak notion of solutions in the same vein as the strong viscosity solutions of \cite{cossorusso14} and the pseudo-Markovian viscosity solutions of \cite{ekren2016pseudo}, in the sense that it is defined by an approximation argument.
	Similar to \cite{ekren2016pseudo}, we   consider   PPDEs with fully nonlinear,  and possibly degenerate, generator $F$, which can exhibit a non-uniformly continuous linear part.
	As in \cite{ekren2016pseudo}, our construction is based on a finite dimensional approximation of paths, that are frozen on some time intervals.	
	The difference is that we do not  consider  a sequence of (possible infinitely many) hitting times to define the frozen-path PPDE, 
	but use a simple sequence 
	$\pi = (\pi^n)_{n\ge 1}$ of  discrete time grids on $[0, T]$, with $\pi^{n} = (0 = t^{n}_0 < \cdots < t^{n}_n = T)$,
	and then freeze the path argument on each time interval $[t^{n}_i, t^{n}_{i+1})$ to obtain our sequence of generators $F^{n}$, see \eqref{eq:def_Fn} below. 	Importantly, the path argument is not only frozen, it is also approximated by a piecewise constant or a piecewise linear approximation. For PPDEs having a probabilistic interpretation in terms of BSDEs or optimal control problems, this amounts to replacing  in the coefficients the original diffusion process by the corresponding piecewise constant or   piecewise linear approximation. From this point of view, this definition is very natural and simple. The fact of freezing the path allows us to reduce to a finite dimensional setting, on which standard viscosity solution techniques can be applied. The fact of making an additional approximation of the original path allows us to view the corresponding PDEs as PDEs with a finite number of parameters on which a-priori regularity estimates can be obtained.  
	
	\vspace{0.5em}	
	
	  In this setting, we say that a function $\vr$ is a $\pi$-approximate viscosity solution of the PPDE if it is the limit of the (classical) viscosity solutions $(v^n)_{n \ge 1}$
	associated to the sequence $(F^{n})_{n\ge 1}$, see Definition \ref{def: approximate solution} for a precise statement.
	Without defining explicitly a notion of solution, this approximation approach was already used successfully   in \cite{bouchard2021c} to obtain $\Cb^{1+\alpha}$-type estimates on the candidate solution of a specific class of fully nonlinear PPDEs. We show in this paper that it can actually allow one to consider fairly general situations. 
	
	\vspace{0.5em}	
	
	Using this definition, the uniqueness and comparison are direct consequences of the comparison principle of classical PDEs. 
	As for the existence, it is easy to obtain for a class of semilinear PPDE or HJB type equations, where the solution has a probabilistic representation by the path-dependent BSDEs or optimal control problems.
	In fact, it is generally enough to assume some uniform continuity conditions on the coefficient functions,
	and then to apply the standard stability results of the BSDEs and SDEs, c.f.  Proposition \ref{prop : stoch representation}. 
	However, it becomes more challenging to prove existence in the general nonlinear case, without the help of a probabilistic representation.
	This is our main result, Theorem \ref{thm: existence + Lipschitz cas differentiable}. 
	It is proved in a rather general setting ensuring that comparison and existence holds at the level of the path-frozen finite dimensional PDEs, and only uses viscosity solution techniques.
	In particular, the generator can be degenerate and does not need to be concave  when $d > 2$, an assumption made in  \cite{ekren2016viscosity2}. {Notice also that existence of Crandall-Lions solutions is proved in \cite{cosso2019crandall,cosso2021path,zhou2020viscosity} only for path-dependent HJB equations.}

	\vspace{0.5em}	
		
	 At the same time, we are able to show that a classical solution of the PPDE is an approximate viscosity solution, when $d \le  2$ or the non-linearity is concave (or convex) in the second order spatial derivative. 	In particular, our result could induce natural numerical schemes for the class of nonlinear PPDEs, with an explicit convergence rate, see Remark \ref{rem: vitesse conv vn to v}.

	\vspace{0.5em}	

	Finally, another advantage of our approximation technique is that it allows one to obtain $\Cb^{\beta,1+\alpha}$-type regularity estimates (in the sense of Dupire) for solutions to   PPDEs, a subject which does not seem to have been investigated so far.  In particular, this enables   to  use the $\Cb^{0,1}$-functional It\^o's formula of \cite{bouchard2021c}, see this reference for a specific application in mathematical finance. In this paper, we show that the solution inherits the $C^{1+\alpha}$-regularity of the coefficients and the terminal condition. 
	 For semi-linear equations, the results can actually be proved by purely probabilistic arguments. 
	For fully nonlinear equations, the approximation approach is fully exploited. 
	In this case, we need to assume additional structure conditions, and stay in the context where the generator is concave with respect to the Hessian matrix or the dimension does not exceed $2$. 
	The (very difficult) question of whether a uniform ellipticity condition can regularize the solution is left for future research.

%
%
%

	\vspace{0.5em}	

	 The rest of the paper is organized as follows.
	In Section \ref{sec:ApproxViscoSol}, we introduce our notion of $\pi$-approximate viscosity solutions to the PPDE, and provide some comparison, existence and stability results, in a fairly general fully nonlinear setting. 
	Then, in Section \ref{sec:Regularity}, we provide some first regularity results on the solutions to the PPDE.

\section{Approximate viscosity solution of path-dependent PDEs}
\label{sec:ApproxViscoSol}

	In this section, we first define our notion of approximate viscosity solution, after introducing some general notations. 
	Its very definition implies that comparison holds under standard assumptions. 
	We next show how it is naturally connected to optimal control problems,
	and then provide an existence result as well as a first stability result in a fully nonlinear setting.
	Finally, we show that the notion of approximate viscosity solution is consistent with the notion of classical solution.

\subsection{Preliminaries}

	By the parabolic and local nature of the PPDE  \eqref{eq:PPDE_intro}, it is natural to consider the case where its coefficients are defined on the space $C([0,T])$ of $\R^d$--valued continuous paths on $[0,T]$. A solution   should then be defined on  $[0,T]\x C([0,T])$. On the other hand, when one investigates the vertical regularity of this solution in the sense of Dupire, one needs to consider paths with jumps,	and hence it is more convenient to define it on the space   $\DT$ of $\R^d$--valued c\`adl\`ag paths on $[0,T]$.
 	For this reason, we will consider two cases $\Om = \CT$ or $\Om = \DT$ equipped with two different metrics.

	\vspace{0.5em}
	For both cases of $\Om$, given $\xr \in \Om$, we define its uniform norm by $\Ninfty{\xr} := \sup_{s \in [0,T]}|\xr_s|$,
	and
	$$
		{\xr_{t\wedge}:=(\xr_{t\wedge s})_{s\le T}}\;\mbox{ for }t\le T\;,\;[\xr]^n_{i}:=(\xr_{t^{n}_{j}})_{j = 0, \cdots, i}
		~\mbox{for}~
		i \le n,
	$$
	in which the later is defined with respect to a given increasing sequence of discrete time grids $\pi=(\pi^{n})_{n \ge 1}$, i.e.
	\begin{itemize}	
 		\item $\pi^{n}=(t^{n}_{i})_{0\le i\le n}$
		with $0 = t^{n}_{0} < \cdots < t^{n}_{n} = T$, for each $n \ge 1$,
		\item $\pi^n \subset \pi^{n+1}$, for each $n \ge 1$,
		\item  $|\pi^n| := \max_{i<n} |t^{n}_{i+1}-t^{n}_{i}| \longrightarrow 0$ as $n\longrightarrow \infty$.
	\end{itemize}	
	
	For each $n \ge 1$, we introduce $\eta_n: [0,T] \longrightarrow [0,T]$ and $\eta^+_n : [0,T] \longrightarrow [0,T]$ defined by
	\begin{equation} \label{eq:def_etan}
		\eta_n(t) := \sum_{i=0}^{n-1}  t^n_i  \1_{\{[t^n_i, t^n_{i+1})\}}(t)+ T\1_{\{T\}}(t)
		~~\mbox{and}~~
		\eta^+_n(t) := \sum_{i=0}^{n-1} t^n_{i+1} \1_{\{(t^n_i, t^n_{i+1}]\}}(t).
	\end{equation}
	We next define different metrics $\rho: \Om \x \Om \longrightarrow \R_+ $ as well as different projection operators $\Pi^n: \Om \longrightarrow \Om$, depending whether $\Om = \CT$ or $\Om = \DT$.
	
	\paragraph{Case I: $\Om = \CT$.}
		Let us define the metric $\rho$ and a family of pseudometrics $(\rho_t)_{t \in [0,T]}$ on $\Om$ by
		\begin{equation} \label{eq:def_rhoC}
			\rho := \rho_T,
			~~~
			\rho_t( \xr, \xr') :=  \Ninfty{\xr_{t\wedge} - \xr'_{t \wedge }}.
		\end{equation}
		Further, for each $n \ge 1$, $t \in (t^n_k, t^n_{k+1}]$, $k = 0, \cdots, n-1$, we define $\Pi^n_t : \Om \longrightarrow \Om$ by
		\begin{eqnarray*}
			\Pi^n_t [\xr]_s
			&:=& 
			\sum_{i=0}^{k-1} 
				\1_{\{s \in [t^n_i, t^n_{i+1})\}} \Big( \frac{t^n_{i+1} - s}{t^n_{i+1} - t^n_i} \xr_{t^n_i} + \frac{s - t^n_i }{t^n_{i+1} - t^n_i} \xr_{t^n_{i+1}} \Big) \\
			&&+~
				\1_{\{s \in [t^n_k, t)\}}\Big( \frac{t -  s}{t - t^n_k} \xr_{t^n_k} + \frac{ s - t^n_k }{t - t^n_k} \xr_{t} \Big)
				+ \1_{\{s \in [t,T]\}} \xr_t,
				~~s \in [0,T].
		\end{eqnarray*}
		Namely, $\Pi^n_t[\xr]$ is the linear interpolation of the points $(\xr_{t^n_0}, \cdots, \xr_{t^n_k}, \xr_t)$.
		By convention, let $\Pi^n [\cdot] := \Pi^n_T[ \cdot]$.

	\paragraph{Case II: $\Om = \DT$.}
		Let $\Lambda_t$ denote the collection of all strictly increasing and continuous bijections from $[0,t]$ to $[0,t]$,
		$I_t: [0,t] \longrightarrow [0,t]$ be the identity function,
		and $\mu$ be a finite positive measure on $[0,T]$ with atoms at finitely number of points in $\pi^n$ for some $n \ge 1$.
		We next define the metric $\rho$ and pseudometrics $(\rho_t)_{t \in [0,T]}$ on $\Om$ by
		\begin{equation} \label{eq:def_rhoD}
			\rho := \rho_T,
			~~~
			\rho_t(\xr, \xr') 
			:=
			\inf_{\lambda \in \Lambda_t} \big( \| \lambda - I_t \| + \| \xr_{t\wedge}-  (\xr' \circ \lambda)_{t\wedge} \| \big)
			+\!
			\int_0^t\! \big| \xr_s - \xr'_s \big| \mu(ds).
		\end{equation}
		For each $n \ge 1$, $t \in (t^n_k, t^n_{k+1}]$, $k = 0, \cdots, n-1$, we define $\Pi^n_t : \xr\in \Om \longmapsto \Pi^n_t [\xr]\in  \Om$ as the piecewise constant approximation of $\xr$:
	$$
		\Pi^{n}_t[\xr]_s
			:=
			\sum_{i=0}^{k-1} \1_{\{s \in [t^{n}_{i},t^{n}_{i+1})\}} \xr_{t^{n}_{i}} 
			+
			\1_{\{s \in [t^{n}_{k},t ) \}} \xr_{t^{n}_{k}} 
			+
			\1_{\{s \in [t,T]\}} \xr_{t},
		~\mbox{for all}~ s \in [0,T],
	$$
	and by convention $\Pi^n[\cdot] := \Pi^n_T[\cdot]$.

	\vspace{0.5em}

	Hereafter, we shall consider the two cases simultaneously and only write $(\Omega,\rho)$, 
	unless we need to specialize to $\Omega=D([0,T])$ as in  Section \ref{subsec:Comparisons} and Section \ref{sec:Regularity} below.

	\begin{Remark} \label{remark:separable_Om}
		$\mathrm{(i)}$ In the case that $\Om = D([0,T])$ and without the integral term w.r.t. $\mu$ in \eqref{eq:def_rhoD},
		the metric $\rho$ is exactly the Skorokhod metric.
		Since the map $\xr \longmapsto \int_0^T \xr_t dt$ is not uniformly continuous under the Skorokhod metric, 
		we add the integral term in \eqref{eq:def_rhoD} to make it more general for later uses. 
		At the same time, to ensure that $\Om$ is a separable space under $\rho$, the measure $\mu$ is assumed to have finitely many atoms.
		Moreover, the atoms are assumed to be in $\pi^n$, for some $n \ge 1$, because of our approximation procedure.
		This is  a constraint on the choice of $\mu$, or on the choice of the discrete time grids $\pi$.
		We also notice that, in this case, 
		$$
			\rho(\xr_{t\wedge}, \xr'_{t\wedge}) 
			~\le~
			\rho_t(\xr_{t\wedge}, \xr'_{t\wedge}) 
			~=~
			\rho_t(\xr, \xr') 
			~\le~
			\big(1+\mu([0,t]) \big) ~ \|\xr_{t\wedge} - \xr'_{t \wedge}\|.
		$$
	
		\noindent $\mathrm{(ii)}$ In both cases of the definitions of $(\Om, \rho, \Pi[\cdot])$, it is easy to see that
		$$
			\rho_t \big(\Pi^{n}_t[\xr], \xr_{t\wedge} \big) \longrightarrow 0
			~\mbox{as}~ n \longrightarrow \infty,
			~~\mbox{for all}~
			\xr \in \Om_t := \{ \xr \in \Om ~: \xr = \xr_{t \wedge } \}.
		$$
		In particular, the space $(\Om_t, \rho_t)$ is separable.
	\end{Remark}

\subsection{Definition of  approximate viscosity solutions}

	Let $\S^d$ denote the space of all symmetric $d \x d$-dimensional matrices, and $\Theta := [0,T] \x \Om$. 
	We say that a functional $u: \Theta \longrightarrow \R$ is non-anticipative if $u(t, \xr)=u(t, \xr_{t \wedge})$ for all $(t,\xr)\in \Theta$.
	We are given a generator function $F: [0,T] \x \Om \x \R \x \R^d \x \S^d \longrightarrow \R$, which is   non-anticipative in the sense that
	$F(t, \xr, y, z, \gamma) = F(t, \xr_{t \wedge}, y, z, \gamma)$ for all $(t, \xr, y, z, \gamma) \in [0,T] \x \Om \x \R \x \R^d \x \S^d$,
	and {a terminal data} $g: \Om \longrightarrow \R$.
	Throughout the paper, we   study the following PPDE:
	\begin{equation}\label{eq:PPDE}
		-\partial_{t} \vr(t,\xr)-F \big( t,\xr,\vr(t,\xr),\nabla_{\xr}\vr(t,\xr),\nabla^{2}_{\xr}\vr(t,\xr) \big) = 0,
		~~(t, \xr) \in [0,T) \x \Om,
	\end{equation}
	with terminal condition
	\begin{equation}\label{eq: cond bord}
		\vr(T,\xr)=g(\xr),~~ \xr\in \Om.
	\end{equation}

	Our definition of approximate viscosity solutions is based on the approximations of the path $\xr$, 
	through the operator $\Pi^{n}$ defined above, which allows us to reduce to a finite dimensional setting.
	Let us first introduce some functional spaces.
	Given $L>0$, let 
	$$
		\Cc_{L}
		:=
		\Big\{
			g: \Om \longrightarrow \R ~:
			g~\mbox{ is continuous w.r.t. $\| \cdot\|$ and satisfies}~
			\sup_{\xr \in \Om} \frac{|g(\xr)|}{1 + \|\xr\| } \le L
		\Big\},
	$$
	and let ${\cal LSC}^{n}_{L}$ (resp.~${\cal USC}^{n}_{L}$) denote the collection of non-anticipative functions $f : \Theta \x \R^{d} \longrightarrow \R$
	satisfying, for all $(t,\xr,x)\in [0, T] \x \Om \x \R^{d}$,
	$$
		|f(t,\xr,x)| 
		~\le~ 
		L \big(1+ \|\xr\|+|x| \big)
		~~\mbox{and}~
		f(t, \xr, x) = f \big( t,  \Pi^n_{\eta_n(t)} [\xr], x \big),
	$$
	and such that $(t, x) \longmapsto f(t, \xr, x)$ is  lower-semicontinuous (resp.~upper-semicontinuous) on  each domain $[t^n_i, t^n_{i+1}) \x \R^d$, $i = 0, \cdots, n-1$.
	Then, it is clear that a functional  $f \in {\cal LSC}^{n}_{L}\cup {\cal USC}^{n}_{L}$ can be represented in the form 
		\begin{equation}\label{eq: form f}
		f (t,\xr,x) 
		~=~
		\sum_{i=0}^{n-1}  f_{i} \big( t,[\xr]^{n}_{i},x \big) \1_{[t^{n}_{i},t^{n}_{i+1})}(t) 
		+
		f_n \big([\xr]^n_n, x \big) \1_{\{t^n_n\}}(t) .
	\end{equation}
	 for a sequence of functions $f_i: [t^n_i, t^n_{i+1}) \x (\R^{d})^{i+1}\x \R^{d} \longrightarrow \R$,  $i=0, \cdots, n-1$, together with a map $f_n: (\R^d)^{n+1} \x \R^d \longrightarrow \R$.
	Let
	$$
		\Cc^{n}_{L}
		~:=~
		{\cal LSC}^{n}_{L}\cap {\cal USC}^{n}_{L}.
	$$
	Finally,  we define
	\begin{equation} \label{eq:def_Fn}
		F^n (t, \xr, y, z, \gamma) 
		:=
		F \big(t, \Pi^{n}_{\eta_n(t)}[\xr], y, z, \gamma \big),
		~\mbox{for all}~
		(t, \xr, y, z, \gamma) \in [0,T) \x \Omega  \x \R \x \R^d \x \S^d.
	\end{equation}

	\begin{Definition}[$\pi^n$-viscosity solution] \label{def:pi_n_solution}
		We say that $ u^n \in {\cal LSC}^{n}_{L}$ (resp.~${\cal USC}^{n}_{L}$) of the form \eqref{eq: form f} is a $\pi^{n}$-viscosity supersolution (resp.~subsolution) of \eqref{eq:PPDE} if, 
		for all $\xr \in \Om$ and $i =0, \cdots, n-1$, 
		the map $(t,x)  \longmapsto u^n(t,  \xr,x) $ is a viscosity supersolution (resp.~subsolution) of 
		\begin{align}\label{eq: PDE sur ti ti+1} 
			-\partial_{t} \vp(t,x)-  F^n \big( t, \xr, \vp(t,x),D\vp(t,x),D^{2}\vp(t,x) \big) =0,
			~~\mbox{on}~
			[t^{n}_{i},t^{n}_{i+1})\x  \R^{d},
		\end{align}
		satisfying the boundary condition
		\begin{equation*}
			\liminf_{t \nearrow t^{n}_{i+1},x'\to x} 
			\delta
			\Big(\vp(t, x')-   u^n \big( t^{n}_{i+1}, \Pi^n[\xr] \bp_{t^{n}_{i+1}} x, x \big) \Big)
			\ge
			0,
			~~\mbox{for all}~
			x\in \R^{d}, 
		\end{equation*}
		with $\delta=1$ (resp.~$\delta=-1$) 
		and\footnote{Hereafter, we identify a constant $x\in \R^{d}$ to the constant process $x\1_{[0,T]}$.} 
		$$
		\xr\bp_{t}\xr':=\xr\1_{[0,t)}+\xr' \1_{[t,T]},\; \mbox{ for all } t\le T\mbox{ and } \xr,\xr'\in \Omega.
		$$
		Moreover, we say that $u^n$ is a $\pi^{n}$-viscosity solution of \eqref{eq:PPDE} if it is both a $\pi^{n}$-viscosity supersolution and subsolution of \eqref{eq:PPDE}. 
	\end{Definition}

	In order to ensure the existence and uniqueness of a $\pi^{n}$-viscosity solution in  $\cup_{L'>0}\Cc^{n}_{L'}$, 
	we make the following  standard assumption throughout the paper,
	in which the continuity or Lipschitz continuity property of a functional $\xr \in \Om \longmapsto f(\xr)$ is defined w.r.t. the
	metric $\rho$.

	\begin{Assumption}\label{ass: F}   
		Let $L > 0$ be a fixed positive constant. 

		\vspace{0.5em}

		\noindent $\mathrm{(i)}$ 
		The functional $F$ is non-anticipative, continuous in time, and for all $K>0$
		and $(t,\xr)\in  \Theta$ such that $\|\xr\|\le K$ there exists $C_{K}>0$ such that the map $(y,z, \gamma)\in \R\x \R^{d}\x \S^{d} \longmapsto F(t, \xr, y,z, \gamma)$ is $C_{K}$-Lipschitz continuous. Moreover,   
		$|F(t,\xr,0,0,0)|\le L(1+\|\xr\|)$.

		\vspace{0.5em}

		\noindent $\mathrm{(ii)}$ For all $K>0$, there exists a continuous map $\varpi_{K}:\R_{+} \longrightarrow \R_{+}$ 
		such that $\varpi_{K}(0)=0$ and 
		\begin{equation} \label{eq:cond_Ishii}
			- 
			F\big(t,\xr,y,\alpha(z-z'), \Gamma_1 \big)
			+
			F\big(t,\xr,y,\alpha(z-z'), \Gamma_2 \big)
			~\le~
			 \varpi_{K} \big(\alpha |z-z'|^{2}+|z-z'| \big),
		\end{equation}
		for all $(t,\xr,y,z,z')\in \Theta\x \R\x \R^{d}\x\R^{d}$, $\Gamma_1, \Gamma_2 \in \S^{d}$ and $\alpha>0$ such that $\| \xr\| \le K$ and
		$$
			-3 \alpha \left(\begin{array}{cc} I_{d} & 0\\0 & I_{d} \end{array}\right)
			\le \left(\begin{array}{cc} \Gamma_1 & 0\\0 & -\Gamma_2 \end{array} \right)
			\le
			3 \alpha \left(\begin{array}{cc} I _{d}& -I_{d}\\-I_{d} & I_{d} \end{array}\right),
		$$
		in which $I_{d}$ denotes the $d\x d$-dimensional identity matrix.

		\vspace{0.5em}

		\noindent $\mathrm{(iii)}$ The map $\gamma \longmapsto F(\cdot, \gamma)$ is non-decreasing (for the natural partial order on $\S^{d}$).

		\vspace{0.5em}

		\noindent $\mathrm{(iv)}$ There exists a compact subset $A$ of  $\R^{d'}$, $d'\ge 1$, 
		non-anticipative continuous maps  $\underline \sigma, \overline \sigma: \Theta \x A \longrightarrow \M^{d}$ 
		and $L$-Lipschitz non-anticipative continuous maps $\underline F, \overline F: \Theta\x \R\x \R^{d} \longmapsto \R$ such that
		\begin{itemize}
			\item[\rm (a)] $\xr\in \Om \longmapsto (\underline \sigma, \overline \sigma)(t,\xr,a)$ is $L$-Lipschitz continuous, uniformly in $(t,a)\in [0,T]\x A$.
			\item[\rm (b)]  For all $\gamma \in \S^{d}$, 
			$$
				\underline F+\inf_{a\in A} \frac12 {\rm Tr} \big[\underline \sigma~ \underline \sigma^{\top} (\cdot,a) ~\gamma \big]
				~\le~
				F(\cdot, \gamma)
				~\le~
				\overline F+\sup_{a\in A} \frac12 {\rm Tr} \big[\overline \sigma ~ \overline \sigma^{\top} (\cdot,a) ~\gamma \big].
			$$
		\end{itemize}
	\end{Assumption}

	\begin{Remark}
	{\rm 
		Notice that for PDE \eqref{eq: PDE sur ti ti+1} on $[t^n_i, t^n_{i+1})$, the path $\xr$ is frozen up to $t^n_i$,
		so that $F^n$ does not depend on the space variable $x$.

		\vspace{0.5em}
	
		Item $\mathrm{(i)}$ of Assumption \ref{ass: F} enables us to work on an unbounded domain by exhibiting a suitable penalty function to obtain a comparison principle, see the proof of  Proposition \ref{prop: comparison n-visco} below. The Lipschitz continuity in $(y,z,\gamma)$ is local in $\xr$, which is not an issue here as the path is frozen in $F^n$.

		\vspace{0.5em}
		
		Items $\mathrm{(ii)}$-$\mathrm{(iii)}$ of Assumption \ref{ass: F} are standard to ensure the well-posedness of the PDE as well as the comparison principle (see e.g. \cite{CrandallIshiiLions}).
		In particular, Item $\mathrm{(ii)}$ is essentially used to verify conditions in Ishii's Lemma.
		Nevertheless, since $\xr$ is frozen up to  time $t^n_i$ for PDE \eqref{eq: PDE sur ti ti+1} on $[t^n_i, t^n_{i+1})$, 
		our condition in \eqref{eq:cond_Ishii} is slightly simpler than that in \cite{CrandallIshiiLions}, and can be trivially made   local in $\xr$ with constants $C_K$.
		We also refer to \cite[Corollaries 4.11 and 4.14]{hairer2015loss} for a relaxation of this condition.

		\vspace{0.5em}

		Item $\mathrm{(iv)}$ of Assumption \ref{ass: F} will be used to exhibit a $\pi^{n}$-viscosity supersolution and a $\pi^{n}$-viscosity  subsolution of \eqref{eq:PPDE} with terminal condition $g\in \Cc_{L}$, having linear growth, uniformly in $n\ge 1$, see  \eqref{eq: linear growth unif n approximate sol} below. 
		It is essentially induced by the Lipschitz continuity of $F$, and could be replaced by  other conditions leading to similar a-priori estimates.
	 }
	 \end{Remark}
	
	\begin{Proposition}\label{prop: comparison n-visco} 
		Let Assumption \ref{ass: F} hold true.
		
		\vspace{0.5em}
		
		\noindent $\mathrm{(i)}$ 		
		Fix $n\ge 1$, 
		let $u^n \in  {\cal LSC}^{n}_{L}$ and $v^n \in  {\cal USC}^{n}_{L}$ be respectively $\pi^{n}$-viscosity supersolution and subsolution
		of \eqref{eq:PPDE}	such that $u^n(t^{n}_{n},\cdot)\ge v^n(t^{n}_{n},\cdot)$. 
		Then, $u^n \ge v^n$ on $\Theta\x \R^{d}$. 
		
		\vspace{0.5em}
		
		\noindent $\mathrm{(ii)}$ Fix $g\in \Cc_{L}$ and $n\ge 1$. Then,    \eqref{eq:PPDE}  has a unique $\pi^{n}$-viscosity solution $v^{n}\in \cup_{L'>0}\Cc^{n}_{L'}$ satisfying the terminal condition
		\begin{align}\label{eq: cond teminale vn}
			v^{n} \big(t^{n}_{n},\xr, \xr_{t^n_n} \big)
			=
			v^{n} \big(t^{n}_{n}, \Pi^n[\xr], \xr_{t^n_n} \big)
			=
			g \big( \Pi^n[\xr] \big),
			~~\mbox{for all}~
			\xr\in \Omega.
		\end{align}
		\noindent $\mathrm{(iii)}$ Fix $g\in \Cc_{L}$ and let   $(v^{n})_{n\ge 1}$ be defined as in $\mathrm{(ii)}$ above. Then, there exists $C >0$, that depends only on $L$,  such that 
		\begin{align}\label{eq: linear growth unif n approximate sol}
			\sup_{n\ge 1} |v^{n}(t,\xr, x)|\le  C (1+\|\xr\|+|x|),
			~\mbox{for all}~
			(t,\xr,x) \in \Theta \x \R^{d}.
		\end{align}
	\end{Proposition}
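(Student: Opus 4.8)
The three statements are essentially consequences of classical viscosity solution theory applied piecewise on each slab $[t^n_i,t^n_{i+1})\times\R^d$, combined with a backward induction over $i=n-1,\dots,0$.

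\emph{Part (i) (comparison).} I would argue by backward induction on the slabs. On the last slab $[t^n_{n-1},t^n_n)\times\R^d$, for each fixed $\xr$ the functions $(t,x)\mapsto u^n(t,\xr,x)$ and $(t,x)\mapsto v^n(t,\xr,x)$ are, respectively, a viscosity supersolution and subsolution of the \emph{finite-dimensional} PDE \eqref{eq: PDE sur ti ti+1}, whose generator $F^n(t,\xr,\cdot,\cdot,\cdot)$ does not depend on $x$ (the path is frozen), is continuous in $t$, locally Lipschitz in $(y,z,\gamma)$, degenerate elliptic by Assumption \ref{ass: F}(iii), and satisfies the Ishii-type structure condition \eqref{eq:cond_Ishii}. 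The terminal data at $t^n_n$ satisfy $u^n\ge v^n$ by hypothesis, and by the one-sided boundary conditions in Definition \ref{def:pi_n_solution} one can pass to the limit $t\nearrow t^n_n$. The only non-standard feature is the unbounded domain $\R^d$: here I would use the linear growth bounds from the definitions of ${\cal LSC}^n_L$, ${\cal USC}^n_L$ together with a penalization function of the form $\varepsilon(1+|x|^2)^{1/2}$ (or $\varepsilon e^{\lambda t}(1+|x|^2)$), exactly as indicated in the remark following Assumption \ref{ass: F}; the Lipschitz bound $|F(t,\xr,0,0,0)|\le L(1+\|\xr\|)$ and the local Lipschitz constant $C_K$ control the error terms so that the perturbed subsolution is strict and a maximum is attained. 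This gives $u^n\ge v^n$ on $[t^n_{n-1},t^n_n)\times\R^d$ for every $\xr$. Now I feed the resulting inequality at time $t^n_{n-1}$ into the boundary condition on the previous slab: from $u^n(t^n_{n-1},\cdot)\ge v^n(t^n_{n-1},\cdot)$ and the $\liminf$ conditions of Definition \ref{def:pi_n_solution}, $u^n$ (resp. $v^n$) is a supersolution (resp. subsolution) on $[t^n_{n-2},t^n_{n-1})\times\R^d$ with ordered boundary data, and the same comparison argument applies. Iterating downward to $i=0$ yields $u^n\ge v^n$ on all of $\Theta\times\R^d$.

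\emph{Part (ii) (existence and uniqueness).} Uniqueness is immediate from (i): two solutions in $\cup_{L'}\Cc^n_{L'}$ with the same terminal data \eqref{eq: cond teminale vn} satisfy $u^n\ge v^n$ and $v^n\ge u^n$. For existence I construct $v^n$ slab-by-slab, again backward. On $[t^n_{n-1},t^n_n)$ and for each frozen history $[\xr]^n_{n-1}\in(\R^d)^n$, I solve the Cauchy problem for \eqref{eq: PDE sur ti ti+1} with terminal condition $x\mapsto g(\Pi^n[\xr]\bp_{t^n_n}x)$ — i.e. the interpolated/constant path extended by the value $x$ at time $t^n_n$. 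By Assumption \ref{ass: F}(i)--(iii) this finite-dimensional fully nonlinear parabolic equation has a unique (continuous, linear-growth) viscosity solution, by Perron's method plus the comparison principle just established; the dependence on the parameter $[\xr]^n_{n-1}$ is continuous because $F^n$ and the terminal data depend continuously on it and comparison gives stability. This produces $v^n_{n-1}$; evaluating at $t\to t^n_{n-1}$ gives the new terminal data $v^n_{n-1}(t^n_{n-1},\cdot)$ on the previous slab, and I repeat. After $n$ steps I obtain a function of the form \eqref{eq: form f} lying in some $\Cc^n_{L'}$, which by construction is a $\pi^n$-viscosity solution satisfying \eqref{eq: cond teminale vn}.

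\emph{Part (iii) (uniform linear growth).} This is where Assumption \ref{ass: F}(iv) enters. The idea is to build, independently of $n$, an explicit supersolution and subsolution of \eqref{eq:PPDE} of linear growth. Using $\overline F$, $\overline\sigma$ I would look for a smooth (in the Dupire sense) functional of the form $\overline\Phi(t,\xr)=e^{\lambda(T-t)}\big(C_0+C_1\Ninfty{\xr}\big)$ or, more robustly, $\overline\Phi(t,\xr)=e^{\lambda(T-t)}\big(C_0 + C_1\psi(\xr_t) + C_1'\!\int_0^t|\xr_s|\,ds\big)$ with $\psi$ a smooth linear-growth majorant of $|\cdot|$, and choose $\lambda, C_0, C_1$ (depending only on $L$ and the compact set $A$) large enough that, after freezing the path through $\Pi^n_{\eta_n(t)}$, the function $(t,x)\mapsto\overline\Phi$ is a classical (hence $\pi^n$-viscosity) supersolution of \eqref{eq: PDE sur ti ti+1} on every slab and dominates $g(\Pi^n[\xr])$ at $t^n_n$ — this uses the growth bound on $g\in\Cc_L$, the bound $|F(t,\xr,0,0,0)|\le L(1+\|\xr\|)$, the $L$-Lipschitz/linear-growth bounds on $\overline F$ and $\overline\sigma$ in (iv)(a)-(b), and the fact that $\Ninfty{\Pi^n_{\eta_n(t)}[\xr]}\le\Ninfty{\xr}$ uniformly in $n$ (the interpolation/piecewise-constant operators are non-expansive). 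A symmetric construction with $\underline F,\underline\sigma$ gives a subsolution $\underline\Phi$. Applying part (i) on each slab then sandwiches $\underline\Phi\le v^n\le\overline\Phi$, and since the constants in $\overline\Phi,\underline\Phi$ depend only on $L$ (and $A$, which is fixed), \eqref{eq: linear growth unif n approximate sol} follows with a constant $C$ independent of $n$.

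\emph{Main obstacle.} The delicate points are not conceptual but technical: (a) in part (i) handling the unbounded spatial domain requires choosing the penalization carefully so that the doubling-of-variables argument still produces an interior maximum despite only local (in $\|\xr\|$, but $\xr$ is frozen so this is harmless) Lipschitz continuity of $F$ — this is standard but must be checked; (b) the boundary conditions in Definition \ref{def:pi_n_solution} are one-sided $\liminf$ conditions rather than continuous matching, so one must verify that the comparison argument correctly propagates the ordered data across slab interfaces $t^n_i$ without losing it; and (c) in part (iii), verifying that the candidate $\overline\Phi$ is genuinely a supersolution of the \emph{frozen-path} equation uniformly in $n$ — the subtlety is that $\partial_t$ of $e^{\lambda(T-t)}$ produces a favorable term of size $\lambda\overline\Phi$ while the generator contributes at most $C(1+\Ninfty{\xr}+|\overline\Phi|+|D\overline\Phi|+\|D^2\overline\Phi\|)$, and one needs $\overline\Phi$ chosen so this is absorbed with constants not depending on $n$; the non-expansiveness of $\Pi^n_{\eta_n(t)}$ is exactly what makes this possible. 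I expect (c) to be the step demanding the most care.
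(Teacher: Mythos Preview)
Your argument for part (i) matches the paper's: penalize the subsolution by $\eps(1+|x|^2)e^{\lambda t}$ on each slab (the paper takes $\lambda=4L$), apply the standard comparison argument via \eqref{eq:cond_Ishii}, and run a backward induction across the interfaces $t^n_i$.

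For parts (ii)--(iii), however, the paper takes a different route from yours. Instead of building explicit smooth barrier functionals, it exploits Assumption~\ref{ass: F}(iv) \emph{probabilistically}: since the lower and upper envelopes of $F$ are HJB-type generators, the paper introduces, on a filtered probability space carrying a Brownian motion $W$ and $A$-valued controls $\mathfrak a$, the controlled forward SDE driven by $\underline\sigma$ (resp.\ $\overline\sigma$) with path argument $\Pi^n_{\eta_n(\cdot)}[X]$, and couples it with a BSDE with driver $\underline F$ (resp.\ $\overline F$) and terminal datum $g(\Pi^n[X])$. Setting $\underline u^n(t,\xr,x):=\inf_{\mathfrak a}Y^{\theta,\mathfrak a}_t$ (resp.\ a supremum) produces a $\pi^n$-viscosity subsolution (resp.\ supersolution). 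Standard BSDE a-priori estimates then give $|Y^{\theta,\mathfrak a}_t|\le C(1+\|\xr\|+|x|)$ with $C$ depending only on $L$, uniformly in $n$ and in the control, so that these barriers have the right linear growth; Perron's method between them yields existence and \eqref{eq: linear growth unif n approximate sol} in one stroke.

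Your explicit-barrier approach runs into an obstruction that you underestimate in point~(c). Assumption~\ref{ass: F}(iv)(a) only says $\overline\sigma(t,\cdot,a)$ is $L$-Lipschitz in $\xr$, so $|\overline\sigma(t,\Pi^n_{\eta_n(t)}[\xr],a)|^2$ can grow like $(1+\|\xr\|)^2$. For your candidate $\overline\Phi(t,\xr)=e^{\lambda(T-t)}\big(C_0+C_1\psi(\xr_t)+\ldots\big)$ with $\psi$ a smooth linear-growth function, $D^2_x\overline\Phi$ is bounded (or decays at best like $1/|x|$), so the second-order contribution $\sup_{a}\tfrac12\Tr[\overline\sigma\,\overline\sigma^{\top}D^2\overline\Phi]$ is of order $(1+\|\xr\|)^2$, while the favourable term $-\partial_t\overline\Phi=\lambda\overline\Phi$ is only linear in $\|\xr\|$. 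No choice of $\lambda,C_0,C_1$ independent of $\|\xr\|$ makes $\overline\Phi$ a supersolution; the non-expansiveness of $\Pi^n_{\eta_n(t)}$ controls $\|\Pi^n[\xr]\|\le\|\xr\|$ but does not cure the quadratic scaling. The probabilistic construction sidesteps this precisely because Lipschitz coefficients in an SDE yield \emph{linear} moment growth via Gronwall, which then propagates through the BSDE estimate. This gap in~(iii) also undermines your (ii), since Perron's method on each slab still needs linear-growth barriers to place $v^n$ in $\cup_{L'}\Cc^n_{L'}$.
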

	\proof $\mathrm{(i)}$ Let us $n \ge 1$, $0 \le i \le n-1$, $\xr \in \Om$.
	If $w$ is a viscosity subsolution of \eqref{eq: PDE sur ti ti+1} on $[t^{n}_{i},t^{n}_{i+1})\x \R^{d}$, then Assumption \ref{ass: F}.$\mathrm{(i)}$ ensures that  $(t,x)\in [t^{n}_{i},t^{n}_{i+1})\x \R^{d} \longmapsto w(t,x)-\eps (1+|x|^{2})e^{4Lt}$ is a viscosity subsolution of \eqref{eq: PDE sur ti ti+1} on $[t^{n}_{i},t^{n}_{i+1})\x \R^{d}$ for all $\eps>0$. 
	Combined with Assumption \ref{ass: F}.$\mathrm{(ii)}$, 
	it follows by standard arguments that \eqref{eq: PDE sur ti ti+1} admits a comparison principle among functions having linear growth (see e.g.~\cite[Section 8]{CrandallIshiiLions}).
	Item (i) of Proposition \ref{prop: comparison n-visco} is then proved by backward induction.
	
	\vspace{0.5em}	
	
	\noindent $\mathrm{(ii)-(iii)}$
	For each $n \ge 1$, the uniqueness of the $\pi^{n}$-viscosity solution $v^n$ of \eqref{eq:PPDE} with terminal condition \eqref{eq: cond teminale vn} follows from Item $\mathrm{(i)}$.
	It remains to construct a solution having the (uniform) linear growth property \eqref{eq: linear growth unif n approximate sol}.
	
	Recall that $A$ is a compact subset of $\R^{d'}$ for some $d' \ge 1$, in Assumption \ref{ass: F}.
	Let us consider a probability space equipped with a standard $d$-dimensional Brownian motion $W$ and the corresponding Brownian filtration,
	and denote by $\Ac$ the collection of all predictable processes taking values in $A$. 
	Then, given $\theta:=(t,\xr, x) \in \Theta \x \R^d$ and $\mathfrak{a}\in \Ac$, let $(X^{\theta,\mathfrak{a}},Y^{\theta,\mathfrak{a}},Z^{\theta,\mathfrak{a}})$  be  the unique triplet of $\R^{d}\x \R\x \R^{d}$-adapted processes satisfying 
	$$
		\E \Big[ \big\|X^{\theta,\mathfrak{a}} \big\|^{2}
		+
		\big\|Y^{\theta,\mathfrak{a}} \big\|^{2}
		+
		\int_{t}^{T} \big|Z^{\theta,\mathfrak{a}}_{s} \big|^{2}ds \Big]
		<
		\infty 
	$$
	and that solves the (decoupled) forward-backward SDE
	\begin{align*}
		X^{\theta,\mathfrak{a}}_{s}& 
		= 
		\1_{\{s \in [0,t)\}} \xr_s +
		\1_{\{s \in [t,T]\}} 
		\Big(
			x + \int_{t} ^{  s} \underline\sigma \big(r, \Pi^{n}_{\eta_n(r)}[X^{\theta,\mathfrak{a}}], \mathfrak{a}_{r} \big)dW_{r}
		\Big), 
		~s \in [0,T],\\
		Y^{\theta,\mathfrak{a}}_{s}
		&=
		g \big( \Pi^{n}[X^{\theta,\mathfrak{a}}] \big) 
		+
		\int_{s}^{T} \underline F \big(r, \Pi^{n}_{\eta_n(r)}[X^{\theta,\mathfrak{a}}],Y^{\theta,\mathfrak{a}}_{r},Z^{\theta,\mathfrak{a}}_{r} \big)dr
		-
		\int_{s}^{T}Z^{\theta,\mathfrak{a}}_{r} dW_{r},
		~s \in [t,T],
	\end{align*}
	where we recall that $\eta_n(r) := t^n_i$ for all $r \in [t^n_i, t^n_{i+1})$, $i=0, \cdots, n-1$.
	It follows then from standard arguments (see e.g. \cite{soner2012wellposedness}), combined with Item (iv) of Assumption \ref{ass: F}, that $(t,\xr,x)\in \Theta\x \R^{d} \longmapsto \underline u^{n}(t,\xr,x):=\inf\{Y^{\theta,\mathfrak{a}}_{t}, $ $\mathfrak{a}\in \Ac\}$ is a $\pi^{n}$-viscosity subsolution (actually solution) of \eqref{eq:PPDE} with terminal condition \eqref{eq: cond teminale vn}. 
	At the same time,  using the Lipschitz continuity condition in Item (iv) of Assumption \ref{ass: F},
	it follows by standard estimates on the BSDEs (see e.g. \cite{el1997backward}) that we can find $C >0$, that depends only on $L$, such that 
	$$
		|Y^{\theta,\mathfrak{a}}_{t}|\le C (1+\|\xr\|+|x|),\; \forall\;\mathfrak{a}\in \Ac.
	$$
	Therefore, $\underline u^{n}$ satisfies the linear growth estimate, uniformly in  $n\ge 1$. 
	Similarly, we can exhibit a  $\pi^{n}$-viscosity supersolution  $\overline u^{n}$ of \eqref{eq:PPDE} with terminal condition \eqref{eq: cond teminale vn} that satisfies the same growth estimate. Then, the existence of a $\pi^n$-viscosity solution $v^n$ and the estimate \eqref{eq: linear growth unif n approximate sol} follow from Perron's method, see e.g.~\cite{CrandallIshiiLions}.
	\endproof

	Given the existence result of Proposition \ref{prop: comparison n-visco}, we can now provide our definition of $\pi$-approximate viscosity solutions. 

	\begin{Definition} [$\pi$-approximate viscosity solution] \label{def: approximate solution} 
		Given $g\in \Cc_{L}$, let $v^{n}$ be the unique $\pi^{n}$-viscosity solution of \eqref{eq:PPDE} satisfying the terminal condition \eqref{eq: cond teminale vn}, for each $n \ge 1$. 
		We say that $\vr$ is a $\pi$-approximate viscosity solution of   \eqref{eq:PPDE}-\eqref{eq: cond bord}
		if $(v^{n})_{n\ge 1}$ admits a pointwise limit and
		$$
			\vr(t,\xr)=\lim_{n\to \infty} v^{n} (t,\xr,\xr_{t}),\;\mbox{ for all } (t,\xr)\in \Theta.
		$$
	\end{Definition}

	\begin{Remark}
		For the class of nonlinear PPDE which will be studied below, 
		we will be able to show that the $\pi$-approximate viscosity solution $\vr$ does not depend on the choice of the sequence $\pi = (\pi^n)_{n \ge 1}$ of discrete time grids.
		Nevertheless, we  keep $\pi$ in Definition \ref{def: approximate solution} to leave some flexibility on this notion of solution that could be useful for larger classes of PPDEs.
		
		\vspace{0.5em}
		
		When $\Om = D([0,T])$, the definition of the metric $\rho$ depends on a finite measure $\mu$ which is implicitly related to $\pi$. 
		This is another reason for keeping $\pi$ in the definition of our notion of solution.
	\end{Remark}

\subsection{Comparison principle, existence and stability properties}

	We provide here some results on the comparison, existence and stability of   $\pi$-approximate viscosity solutions.
	For a $\pi^n$-viscosity solution solution $v^n: \Theta \x \R^d \longrightarrow \R$, we will consider it as a functional on $\Theta$ with
	$$
		v^n(t, \xr) ~:=~ v^n(t, \xr, \xr_t),
		~~\mbox{for all}~ (t, \xr) \in \Theta.
	$$
	For later uses, we also consider a modulus of continuity
	$\varpi_{\circ}: \R_+ \longrightarrow \R_+$ (i.e. a non-decreasing function satisfying $\varpi_{\circ}(0)=0$), which is concave
	and satisfies that,  for some constant $c > 0$, $\varpi_{\circ}(x) \ge cx$ for all $x\ge 0$.
	We further define $\varpi_{\circ}': \R_+ \longrightarrow \R_+$ by
	\begin{equation} \label{eq:def_varpi_p}
		\varpi_{\circ}' (x) ~:=~ \sqrt{\varpi_{\circ}(x^2)},~~\mbox{for all}~x \ge 0.
	\end{equation}

\subsubsection{{Comparison of solutions}}

	By the comparison principle results for $\pi^{n}$-viscosity solutions  of Proposition \ref{prop: comparison n-visco},
	it follows immediately a comparison principle for the $\pi$-approximate viscosity solutions, by Definition \ref{def: approximate solution}.
	We state the result below and omit its proof.

	\begin{Proposition}\label{prop: comparison} 
		For $k=1,2$, let $g^{k}\in \Cc_{L}$, $F^{k}: \Theta \x \R\x \R^{d}\x \S^{d} \longrightarrow \R$ satisfy Assumption \ref{ass: F},
		and $\vr^{k}$ be a $\pi$-approximate viscosity solution   of  \eqref{eq:PPDE}-\eqref{eq: cond bord} associated to $(F^{k},g^{k})$ in place of $(F,g)$. 
		Assume that $F^{1} \ge F^{2}$ and that $g^{1}\ge g^{2}$. 
		Then, $\vr^{1}\ge \vr^{2}$ on $\Theta$. 
	\end{Proposition}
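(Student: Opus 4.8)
The plan is to reduce the comparison statement for $\pi$-approximate viscosity solutions entirely to the comparison principle for the $\pi^n$-viscosity solutions already established in Proposition \ref{prop: comparison n-visco}.(i), exploiting the fact that by Definition \ref{def: approximate solution} each $\vr^k$ is a pointwise limit of the canonical sequence $(v^{k,n})_{n\ge 1}$ of $\pi^n$-viscosity solutions. First I would fix $n\ge 1$ and let $v^{1,n}\in\cup_{L'>0}\Cc^n_{L'}$ (resp.~$v^{2,n}$) denote the unique $\pi^n$-viscosity solution of \eqref{eq:PPDE} associated with $(F^1,g^1)$ (resp.~$(F^2,g^2)$) satisfying the terminal condition \eqref{eq: cond teminale vn}, which exist by Proposition \ref{prop: comparison n-visco}.(ii).

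Next I would check that $v^{1,n}$ is a $\pi^n$-viscosity supersolution of the PPDE \emph{with generator $F^2$}. Indeed, on each slab $[t^n_i,t^n_{i+1})\times\R^d$ the map $(t,x)\mapsto v^{1,n}(t,\xr,x)$ is a viscosity supersolution of \eqref{eq: PDE sur ti ti+1} with $F^{1,n}$ in place of $F^n$; since $F^1\ge F^2$ pointwise, we have $F^{1,n}\ge F^{2,n}$ (freezing the path preserves the inequality), and the supersolution property is monotone in the generator in the right direction, so $v^{1,n}$ is also a viscosity supersolution of the $F^{2,n}$-equation on each slab. The boundary conditions at the grid points $t^n_{i+1}$ are inherited from $v^{1,n}$ itself, using $g^1\ge g^2$ at the terminal time $t^n_n$ and propagating backwards. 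Thus $v^{1,n}\in\Cc^n_{L'}$ is a $\pi^n$-viscosity supersolution of \eqref{eq:PPDE} associated with $F^2$, while $v^{2,n}$ is the $\pi^n$-viscosity (sub)solution for $F^2$; since $v^{1,n}(t^n_n,\cdot)=g^1(\Pi^n[\cdot])\ge g^2(\Pi^n[\cdot])=v^{2,n}(t^n_n,\cdot)$, Proposition \ref{prop: comparison n-visco}.(i) applied to the pair $(v^{1,n},v^{2,n})$ (with generator $F^2$) yields $v^{1,n}\ge v^{2,n}$ on $\Theta\times\R^d$.

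Finally, I would pass to the pointwise limit: for every $(t,\xr)\in\Theta$,
\[
\vr^1(t,\xr)=\lim_{n\to\infty}v^{1,n}(t,\xr,\xr_t)\;\ge\;\lim_{n\to\infty}v^{2,n}(t,\xr,\xr_t)=\vr^2(t,\xr),
\]
both limits existing by the definition of $\pi$-approximate viscosity solution. This gives $\vr^1\ge\vr^2$ on $\Theta$, as claimed. The only point requiring a line of care is the monotonicity-in-generator step together with the fact that the auxiliary comparison of Proposition \ref{prop: comparison n-visco}.(i) must be invoked with a \emph{common} generator; both are handled by noting that $v^{1,n}$ solves the $F^1$-equation and is therefore a supersolution of the (smaller) $F^2$-equation, so the genuine obstacle is essentially nil — this is why the authors state the result and omit the proof.
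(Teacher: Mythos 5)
Your proof is correct and is exactly the argument the paper has in mind: the authors state that the result ``follows immediately'' from Proposition \ref{prop: comparison n-visco} and Definition \ref{def: approximate solution} and omit the details, and your write-up simply fills in those details (monotonicity of the supersolution property in the generator via $F^1\ge F^2$, comparison at the level of each $\pi^n$-viscosity solution with the common generator $F^2$, and passage to the pointwise limit). No gaps.
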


\subsubsection{A first existence result for Hamilton-Jacobi-Bellman equations}

	When PPDE \eqref{eq:PPDE} is in form of a path-dependent Hamilton-Jacobi-Bellman (HJB) equation,
	then, for each $n \ge 1$, its $\pi^n$-viscosity solution $v^n$ is the solution to a classical HJB equation,
	which corresponds to the value function of a controlled diffusion processes problem in which the path of the controlled forward process $X$ is replaced by its approximation $\Pi^{n}[X]$ in the coefficients.
	In this case, the convergence of $(v^n)_{n\ge 1}$ follows from the convergence of $\Pi^{n}[X]$ to $X$.
	Let us provide the following example to illustrate the idea,
	where the continuity or Lipschitz continuity of a functional w.r.t.~$\xr$ is under $\rho$.
	Another general case will be studied later in  Theorem \ref{thm: existence + Lipschitz cas differentiable}.

	\begin{Proposition} \label{prop : stoch representation}
		Let $g\in \Cc_{L}$, and for all $(t,\xr,y,z, \gamma)\in \Theta\x \R\x \R^{d}\x \S^{d}$,
		\begin{align*}
			F(t,\xr,y,z,\gamma) ~=~  \sup_{a\in A} \Big( F_{\circ} (t,\xr,y,z, a) + \frac12 {\rm Tr}[ \sigma \sigma^{\top}(t,\xr,a)  \gamma] \Big),
		\end{align*}
		for some  Borel set $A\subset \R^{d'}$, $d'\ge 1$, and continuous functionals $F_{\circ}: \Theta\x \R\x \R^{d}\x A \longrightarrow \R$ and $\sigma: \Theta\x A \longrightarrow \R$
		such that $\sup_{(t,a)\in [0,T]\x A} \big| F(t, \mathbf{0}, 0, 0, a) \big| < \infty$ and
		\begin{align*} 
			&\sup_{(t, \xr, a)\in \Theta \x A} \frac{|\sigma(t, \xr, a) |}{1+ |\xr|} < \infty,
			~~~
			\xr \longmapsto \sigma(t, \xr, a) ~\mbox{is Lipschitz, uniformly in}~(t, a)\in [0,T]\x A.
		\end{align*}
		Moreover, assume that, for all $(t, \xr, \xr', y, y', z, z', a)\in [0,T]\x \Omega^{2}\x \R^{2}\x (\R^{d})^{2}\x A$,
		$$
			\big| F_{\circ}(t, \xr, y, z, a) - F_{\circ}(t, \xr, y', z', a) \big| 
			~\le~
			L \big(|y-y'| + |z - z'| \big),
		$$
		and
		$$
			\big| g(\xr) - g(\xr') \big| 
			+
			\big| F_{\circ}(t, \xr, y, z, a) - F_{\circ}(t, \xr', y, z, a) \big|
			~\le~
			L  \varpi_{\circ}' \big( \rho_t(\xr, \xr') \big).
		$$
		Then,  PPDE \eqref{eq:PPDE}-\eqref{eq: cond bord} has a unique $\pi^n$-viscosity solution $v^n$ for each $n \ge 1$, and $v^n \longrightarrow \vr$ pointwise for some non-anticipative map $\vr: \Theta \longrightarrow \R$.
		In particular, $\vr$ is a $\pi$-approximate viscosity solution of \eqref{eq:PPDE}-\eqref{eq: cond bord}.
	\end{Proposition}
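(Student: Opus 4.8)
The plan is to reduce everything to the classical theory of controlled diffusions on each time slab $[t^n_i,t^n_{i+1})$, and then to let $n\to\infty$ by the standard stability estimates for SDEs and BSDEs; the only genuinely delicate point will be to make the convergence of the path-approximation $\Pi^n$ uniform with respect to the control.

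First I would check that $(F,g)$ falls within the scope of the comparison part of Proposition~\ref{prop: comparison n-visco}: Item~$\mathrm{(i)}$ of Assumption~\ref{ass: F} holds by the linear growth of $\sigma$ and the Lipschitz continuity of $F_{\circ}$ in $(y,z)$; Item~$\mathrm{(iii)}$ because $\sigma\sigma^{\top}\ge 0$; and Item~$\mathrm{(ii)}$ is the classical estimate for Hamilton--Jacobi--Bellman operators written as a supremum of the linear operators $\frac12\Tr[\sigma\sigma^{\top}(t,\xr,a)\cdot]+F_{\circ}(t,\xr,\cdot,\cdot,a)$, each of which satisfies~\eqref{eq:cond_Ishii} with a $\varpi_K$ built from the (local) Lipschitz modulus of $\sigma$. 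This already yields uniqueness of $v^n$. For existence I would not invoke Item~$\mathrm{(iv)}$ (here $A$ is only Borel), but argue exactly as in the proof of Proposition~\ref{prop: comparison n-visco}$\mathrm{(ii)}$--$\mathrm{(iii)}$: by backward induction over the slabs, $v^n$ is the value function of a controlled diffusion problem, so that for $\theta=(t,\xr,x)\in\Theta\x\R^d$,
\[
	v^n(t,\xr,x)~=~\sup_{\mathfrak a\in\Ac}Y^{n,\theta,\mathfrak a}_t,
\]
where, on the same probabilistic set-up as in that proof, $(X^{n,\theta,\mathfrak a},Y^{n,\theta,\mathfrak a},Z^{n,\theta,\mathfrak a})$ solves the decoupled forward--backward system with $X_s=\xr_s$ for $s<t$, $X_t=x$, $dX_s=\sigma(s,\Pi^n_{\eta_n(s)}[X],\mathfrak a_s)\,dW_s$ on $[t,T]$, generator $F_{\circ}(r,\Pi^n_{\eta_n(r)}[X],Y_r,Z_r,\mathfrak a_r)$ and terminal value $g(\Pi^n[X])$; the a priori $L^2$-estimates for this BSDE give the linear growth of $v^n$, uniformly in $n$.

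Next I would introduce the limiting control problem: for the same data let $(X^{\theta,\mathfrak a},Y^{\theta,\mathfrak a},Z^{\theta,\mathfrak a})$ solve the analogous system with $\Pi^n_{\eta_n(\cdot)}[X]$ replaced by the stopped path $X_{\cdot\wedge}$ — well posed since $\sigma(s,\cdot,a)$ is Lipschitz for $\rho_s$, with $\rho_s\le(1+\mu([0,s]))\,\|\cdot_{s\wedge}\|$ in the càdlàg case by Remark~\ref{remark:separable_Om}$\mathrm{(i)}$, and since $F_{\circ}$ is Lipschitz in $(y,z)$ — and set $\vr(t,\xr):=\sup_{\mathfrak a\in\Ac}Y^{\theta,\mathfrak a}_t$ with $\theta=(t,\xr_{t\wedge},\xr_t)$, which is finite and non-anticipative because the system only feels $\xr$ through $\xr_{t\wedge}$. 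The claim then reduces to $\sup_{\mathfrak a\in\Ac}|Y^{n,\theta,\mathfrak a}_t-Y^{\theta,\mathfrak a}_t|\to 0$. Standard SDE stability together with Gronwall's lemma give, uniformly in $\mathfrak a$,
\[
	\E\Big[\sup_{s\le T}\big|X^{n,\theta,\mathfrak a}_s-X^{\theta,\mathfrak a}_s\big|^2\Big]
	~\le~
	C\sup_{\mathfrak a\in\Ac}\E\Big[\sup_{s\le T}\rho_s\big(\Pi^n_{\eta_n(s)}[X^{\theta,\mathfrak a}],X^{\theta,\mathfrak a}_{s\wedge}\big)^2\Big],
\]
and the right-hand side tends to $0$ uniformly in $\mathfrak a$: by Remark~\ref{remark:separable_Om}$\mathrm{(ii)}$ together with $|\eta_n(s)-s|\le|\pi^n|\to 0$, the quantity $\rho_s(\Pi^n_{\eta_n(s)}[X^{\theta,\mathfrak a}],X^{\theta,\mathfrak a}_{s\wedge})$ is controlled by the $|\pi^n|$-scale oscillation of $X^{\theta,\mathfrak a}$, and since $\sigma$ has linear growth uniformly in $(s,a)$ the family $\{X^{\theta,\mathfrak a}\}_{\mathfrak a\in\Ac}$ has uniformly bounded second moments and a uniform $L^2$-modulus of continuity, so dominated convergence applies uniformly in $\mathfrak a$. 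Feeding this into the classical a priori estimate for BSDEs (e.g.~\cite{el1997backward}), using $|g(\xr)-g(\xr')|+|F_{\circ}(t,\xr,y,z,a)-F_{\circ}(t,\xr',y,z,a)|\le L\varpi_{\circ}'(\rho_t(\xr,\xr'))$, the Lipschitz continuity of $F_{\circ}$ in $(y,z)$, the identity $\varpi_{\circ}'(\cdot)^2=\varpi_{\circ}(\cdot^2)$ and Jensen's inequality for the concave $\varpi_{\circ}$, one obtains $\sup_{\mathfrak a\in\Ac}|Y^{n,\theta,\mathfrak a}_t-Y^{\theta,\mathfrak a}_t|^2\to 0$, hence $v^n(t,\xr,\xr_t)\to\vr(t,\xr)$ pointwise; since $\vr$ is non-anticipative, Definition~\ref{def: approximate solution} then shows that $\vr$ is a $\pi$-approximate viscosity solution of~\eqref{eq:PPDE}--\eqref{eq: cond bord}.

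I expect the main obstacle to be precisely this \emph{uniform in $\mathfrak a$} convergence of the path approximations: one cannot simply invoke the pointwise statement of Remark~\ref{remark:separable_Om}$\mathrm{(ii)}$ path by path, but must exhibit a modulus of continuity for the whole family of controlled diffusions $\{X^{\theta,\mathfrak a}\}_{\mathfrak a}$ that is independent of the control, and at the same time handle the feedback of $X^{n,\theta,\mathfrak a}$ into its own dynamics through $\Pi^n_{\eta_n(\cdot)}$, which is absorbed by Gronwall's lemma once the Lipschitz dependence of $\sigma$ on the path is expressed through the sup-norm via the comparison of $\rho$ with $\|\cdot\|$. A secondary technical point to be careful about is that, although $X^{\theta,\mathfrak a}$ has continuous trajectories on $[t,T]$ coming from the Brownian integral, in the càdlàg case the ``initial'' piece $(\xr_s)_{s<t}$ may have jumps, so the modulus-of-continuity argument has to be applied only to the diffusive part of the path.
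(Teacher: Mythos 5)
Your proposal is correct and follows essentially the same route as the paper: represent $v^n$ as the value $\sup_{\mathfrak a}Y^{n,\mathfrak a}_t$ of a controlled forward--backward system with the path replaced by $\Pi^n_{\eta_n(\cdot)}[X^{n,\mathfrak a}]$, introduce the limiting controlled diffusion, and pass to the limit via SDE stability (uniform in the control) followed by BSDE stability using the modulus $\varpi_{\circ}'$ and the concavity of $\varpi_{\circ}$. Your additional care about the uniform-in-$\mathfrak a$ convergence of the path approximation and about deriving existence of $v^n$ directly by backward induction (rather than citing Proposition~\ref{prop: comparison n-visco}, whose Item~(iv) hypothesis with compact $A$ is not literally verified here) only makes explicit what the paper leaves to ``standard arguments''.
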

	\proof Let us fix $(t, \xr) \in \Theta$.
	First, by Proposition \ref{prop: comparison n-visco}, there exists a unique $\pi^{n}$-viscosity solution $v^n$ to  \eqref{eq:PPDE}  with linear growth.
	Moreover, by the representation theorem of Markovian 2BSDE (see e.g. \cite{soner2012wellposedness}), one has $v^n(t, \xr) = \sup_{\mathfrak{a} \in \Ac} Y^{n, \mathfrak{a}}_t$,
	where $(X^{n, \mathfrak{a}}, Y^{n, \mathfrak{a}}, Z^{n, \mathfrak{a}})$ is the unique solution to the FBSDE
	\begin{align*}
		X^{n,\mathfrak{a}}_{s} & = \xr_{s \wedge t} + \int_{t}^{s \vee t}  \sigma\big(r, \Pi^n_{\eta_n(r)}\big[ X^{n,\mathfrak{a}}\big], \mathfrak{a}_{r} \big) dW_{r}, ~s \in [0,T],\\
		Y^{n,\mathfrak{a}}_{s}
		&=
		g \big( \Pi^n\big[ X^{n,\mathfrak{a}} \big] \big) 
		+
		\int_{s}^{T}  F_{\circ} \big(r,  \Pi^n_{\eta_n(r)} \big[ X^{n,\mathfrak{a}} \big] ,Y^{n,\mathfrak{a}}_{r}, Z^{n,\mathfrak{a}}_{r} \big)dr 
		-
		\int_{s}^{T}Z^{n,\mathfrak{a}}_{r} dW_{r}, ~s \in [t,T],
	\end{align*}	
	and $\Ac,$ $W$ are as in  Step (ii)-(iii) of the proof of Proposition \ref{prop: comparison n-visco}.
	Let $X^{\mathfrak{a}}$ be the controlled diffusion process defined by
	$$
		X^{\mathfrak{a}}_{s} ~=~ \xr_{s \wedge t} + \int_{t}^{s \vee t} \sigma\big(r, X^{\mathfrak{a}} ,\mathfrak{a}_{r} \big) dW_{r},
		~s \in [0,T].
	$$
	It follows from standard stability result for SDEs that
	$$
		\sup_{\mathfrak{a} \in \Ac} \E \Big[ \big \| X^{n, \mathfrak{a}} - X^{\mathfrak{a}}  \big\|^2 \Big] 
		~\longrightarrow~0,
		~\mbox{as}~n \longrightarrow \infty.
	$$
	Using the concavity  of the continuity modulus function $\varpi_{\circ}$, one can deduce that
	$$
		\sup_{\mathfrak{a} \in \Ac} 
		\E \Big[ 
			\!\int_t^T\!  \Big| F_{\circ} \big(r, \Pi^n_{\eta_n(r)} [ X^{n,\mathfrak{a}} ], 0,0 \big) - F_{\circ} \big(r, X^{\mathfrak{a}}, 0,0 \big) \Big|^2 dr
			+ \Big| g \big( \Pi^n \big[ X^{n, \mathfrak{a}} \big] \big) - g \big(X^{\mathfrak{a}} \big) \Big|^2
		\Big] 
		\longrightarrow0.
	$$
	Let $(Y^{\mathfrak{a}}, Z^{\mathfrak{a}})$ be the unique solution of the BSDE
	$$
		Y^{\mathfrak{a}}_{s}
		~=~
		g \big( X^{\mathfrak{a}}  \big) 
		+
		\int_{s}^{T}  F_{\circ} \big(r,  X^{\mathfrak{a}}  ,Y^{\mathfrak{a}}_{r}, Z^{\mathfrak{a}}_{r} \big)dr 
		-
		\int_{s}^{T}Z^{\mathfrak{a}}_{r} dW_{r}, ~s \in [t,T].
	$$
	It follows then by standard stability result for BSDEs (see e.g. \cite{el1997backward}) that
	$$
		\sup_{\mathfrak{a} \in \Ac} \big| Y^{n, \mathfrak{a}}_t - Y^{\mathfrak{a}}_t \big| 
		~\longrightarrow~ 0.
	$$
	Therefore, one has
	$$
		v^n(t, \xr) = v^n(t, \xr, \xr_t) = \sup_{\mathfrak{a} \in \Ac} Y^{n, \mathfrak{a}}_t
		~\longrightarrow~
		\vr(t, \xr) := \sup_{\mathfrak{a} \in \Ac} Y^{\mathfrak{a}}_t,
	$$
	and hence $\vr$ is a $\pi$-approximate viscosity solution of   \eqref{eq:PPDE}-\eqref{eq: cond bord} by Definition \ref{def: approximate solution}.
	\qed

	\begin{Remark} \label{rem:existence1}
		$\mathrm{(i)}$ Similarly, one can   consider Hamilton-Jacobi-Bellman-Isaac equations associated to zero-sum games. 
		The critical point is the uniform convergence of the approximation of the controlled diffusion processes as well as the stability of the related BSDEs.
	
		\vspace{0.5em}
	
		\noindent $\mathrm{(ii)}$
		Notice that, in the proof of Proposition \ref{prop : stoch representation},   convergence holds (to the same function) for any sequence $\pi$ of time grids,
		and the limit does not depends on $\pi$. 
		In other words, the exact sequence $\pi$ of time grids entering in Definition  \ref{def: approximate solution} does not play any role.
		This will also be the case for our general existence result in Theorem \ref{thm: existence + Lipschitz cas differentiable} below.
	\end{Remark}

\subsubsection{Existence and stability   for fully nonlinear equations}
\label{subsec existence et stability}

	In this part, we prove  the existence of a unique $\pi$-approximate viscosity solution as well as a stability result  
	for a general class of fully nonlinear equations satisfying the following structure condition. 
	Recall that $\rho$ is defined in \eqref{eq:def_rhoC} and \eqref{eq:def_rhoD} for the two cases of $\Om$,
	the modulus of continuity $\varpi_{\circ}: \R_+ \longrightarrow \R_+$ is concave
	and $\varpi_{\circ}' (x) := \sqrt{\varpi_{\circ}(x^2)}$, for all $x \ge 0$.

	\begin{Assumption} \label{ass:FH}
		There exists a continuous non-anticipative function $H: \Theta\x \R \x \R^{d}\x \S^{d}  \longrightarrow \R$, 
		together with non-anticipative maps $r : \Theta \longrightarrow \R$, $\mu:\Theta \longrightarrow \R$ and $\sigma :\Theta \longrightarrow \S^{d}$,
		such that $\gamma \longmapsto H(\cdot, \gamma)$ is increasing, and for all $(t, \xr, y, z, \gamma)\in [0,T]\x \Om\x \R\x \R^{d}\x \S^{d}$,
		$$
			F(t, \xr, y, z, \gamma) 
			~=~
			H(t, \xr, y, z, \gamma) + r(t,\xr) y + \mu(t,\xr) \cdot z +\frac12 \mathrm{Tr} \big[ \sigma\sigma^{\top} (t,\xr) \gamma \big].
		$$
		Moreover, there is a constant $L>0$ such that for all  $t \in [0,T]$ and $(\xr,y, z, \gamma),(\xr',y', z', \gamma')\in \Om \x \R\x \R^{d}\x \S^{d}$,
		one has $| r(t, \xr) |\le L$ and
		\begin{align}
			&\big| r(t,\xr)-r(t,\xr') \big| + \big| \mu(t,\xr)-\mu(t,\xr') \big| + \big| \sigma(t,\xr)-\sigma(t,\xr') \big| 
				~\le~  L \rho_t(\xr, \xr'),
			\label{eq: hyp lip rho mu sig g}\\
			&\big| H(t,\xr,y, z, \gamma)-H(t,\xr, y', z', \gamma') \big| 
				~\le~ L \big( |y-y'| + |z-z'| + |\gamma- \gamma'| \big),
			\label{eq: hyp lip H}\\
			&\big| g(\xr)-g(\xr') \big|  + \big| H(t,\xr,y, z, \gamma)-H(t,\xr', y, z, \gamma) \big|
				~\le~ L \varpi_{\circ}' \big(\rho(\xr, \xr') \big).
			\label{eq: hyp borne rho}
		\end{align}
	\end{Assumption}

	The above turns out to be enough not only to prove existence and uniqueness but also that the solution does not depend on the particular sequence $\pi = (\pi^n)_{n \ge 1}$ of  time grids used in Definition \ref{def: approximate solution} (see also Remark \ref{rem:existence1}).

	\begin{Theorem}\label{thm: existence + Lipschitz cas differentiable} 
		Let Assumptions \ref{ass: F}  and \ref{ass:FH} hold true.
		Then: 
		
		\vspace{0.5em}
		
		\noindent $\mathrm{(i)}$ PPDE \eqref{eq:PPDE}-\eqref{eq: cond bord} has a  unique $\pi$-approximate viscosity solution $\vr$. 
		
		\vspace{0.5em}

		\noindent $\mathrm{(ii)}$ For all $K > 0$, there exists a constant $C_{K}>0$, that depends only on $K$ and $L$, such that
		\begin{equation}\label{eq: approximate sol is lipschitz cas diff}
			|\vr(t,\xr')-\vr(t,\xr)|\le C_{K}~  \varpi_{\circ}' \big( \rho_t \big(\xr, \xr' \big) \big)
			~~\mbox{and}~~
			|\vr(t',\xr_{ t\wedge})-\vr(t,\xr)|\le  C_{K} \varpi_{\circ}' \big( |t'-t|^{\frac{1}2} \big),
		\end{equation}
		for all $0 \le t\le t' \le T$ and $  \xr,\xr'\in \Om$ satisfying  $\|\xr\|\vee \|\xr'\|\le K$.
		
		\vspace{0.5em}

		\noindent $\mathrm{(iii)}$  If $\pi'$ is  another increasing sequence of discrete time grids and $\vr'$ is the $\pi'$-approximate viscosity solution of  \eqref{eq:PPDE}-\eqref{eq: cond bord}, 
			then $\vr'=\vr$ on $[0,T]\x \Om$.
	\end{Theorem}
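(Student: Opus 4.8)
The plan is to establish a uniform (in $n$) modulus-of-continuity estimate for the classical viscosity solutions $v^n$ of the path-frozen PDEs, and then identify the pointwise limit via a Cauchy/stability argument. More precisely, I would proceed as follows.

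\textbf{Step 1: Uniform regularity of $v^n$.} First I would show that there exists $C_K>0$, depending only on $K$ and $L$ (and not on $n$), such that $|v^n(t,\xr,x)-v^n(t,\xr,x')|\le C_K\varpi_{\circ}'(|x-x'|)$ and $|v^n(t,\xr,x)-v^n(t,\xr',x)|\le C_K\varpi_{\circ}'(\rho_t(\xr,\xr'))$ whenever $\|\xr\|\vee\|\xr'\|\vee|x|\vee|x'|\le K$, together with the $\tfrac12$-Hölder-in-time estimate of \eqref{eq: approximate sol is lipschitz cas diff}. This is the technical heart. Using Assumption \ref{ass:FH}, on each slab $[t^n_i,t^n_{i+1})$ the equation for $(t,x)\mapsto v^n(t,\xr,x)$ has frozen-path coefficients $r,\mu,\sigma$ evaluated at $\Pi^n_{\eta_n(t)}[\xr]$ plus the $L$-Lipschitz nonlinearity $H$; the dependence on $\xr$ enters only through these coefficients and through the terminal data at $t^n_{i+1}$, which is itself $v^n(t^n_{i+1},\cdot)$. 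The standard way to get the modulus is a doubling-of-variables argument in the $x$ (and $\xr$) variables on each slab, using Assumption \ref{ass: F}.(ii) (Ishii's lemma) and the Lipschitz/Hölder estimates \eqref{eq: hyp lip rho mu sig g}--\eqref{eq: hyp borne rho} on $r,\mu,\sigma,H,g$, propagated by backward induction over the slabs: one must check that the modulus $\varpi_{\circ}'$ is preserved under one induction step, which works because $\varpi_{\circ}$ is concave and $\varpi_{\circ}(x)\ge cx$, so that composing with the Lipschitz flow on a time interval of length $|\pi^n|$ only multiplies the constant, and summing over the $n$ slabs gives a bound depending on $T$ rather than $n$. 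The key point making this uniform in $n$ is exactly that the piecewise-constant/linear projection $\Pi^n$ is $1$-Lipschitz: $\rho_t(\Pi^n_{\eta_n(t)}[\xr],\Pi^n_{\eta_n(t)}[\xr'])\le C\rho_t(\xr,\xr')$, so freezing does not degrade the modulus. The time-regularity follows from the standard comparison of $v^n$ with solutions of the frozen PDE shifted by $\pm C(t'-t)\pm C\varpi_{\circ}'(|t'-t|^{1/2})$-type barriers, using the linear growth bound \eqref{eq: linear growth unif n approximate sol} and the structure of $F^n$.

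\textbf{Step 2: Convergence.} Once $(v^n)_{n\ge1}$ is uniformly bounded (by \eqref{eq: linear growth unif n approximate sol}) and equicontinuous on compacts (Step 1), by Arzel\`a--Ascoli every subsequence has a locally uniformly convergent sub-subsequence; denote a limit by $\vr$. By the Barles--Perthame half-relaxed-limits method, $\vr$ is a viscosity solution of the limiting frozen-in-the-limit problem on each slab; but since $|\pi^n|\to0$ the ``slabs'' shrink and $\Pi^n_{\eta_n(t)}[\xr]\to\xr_{t\wedge}$ in $\rho_t$ (Remark \ref{remark:separable_Om}(ii)), so in fact $\vr$ is characterized intrinsically. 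To pin it down uniquely, I would run a doubling-of-variables comparison argument directly between two subsequential limits (or between $v^n$ and $v^m$), again on the time-slab structure, which forces all subsequential limits to coincide; hence the full sequence converges. This also gives (ii), since the uniform estimate passes to the limit.

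\textbf{Step 3: Independence of $\pi$ --- this is the statement (iii).} Given the two sequences $\pi$ and $\pi'$, with limits $\vr$ and $\vr'$, I would consider the common refinement sequence $\pi''$ obtained by merging $\pi^n$ and $\pi'^n$ (legitimate since in the $\DT$ case $\mu$ has atoms only in $\pi^n$, and one can arrange $\pi''$ to still contain all required atoms, or else compare against a third measure dominating both); since $\pi^n,\pi'^n\subset\pi''^n$ up to reindexing and $|\pi''^n|\to0$, the associated solutions $v''^n$ are squeezed between comparison barriers built from $v^n$ and $v'^n$. The cleanest route, though, is simply to observe that the limit $\vr$ was characterized in Step 2 \emph{independently of the grid}: it is the unique function satisfying the uniform regularity \eqref{eq: approximate sol is lipschitz cas diff} that arises as a half-relaxed limit and satisfies the comparison inequalities against the $\pi^n$-sub/supersolutions for \emph{any} sequence. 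Since both $\vr$ and $\vr'$ satisfy the same characterization (the comparison argument of Step 2 applies verbatim with mixed grids $\pi^n$ and $\pi'^m$), we get $\vr\le\vr'$ and $\vr'\le\vr$, hence equality. The main obstacle I anticipate is Step 1: getting the modulus of continuity uniform in $n$, because the backward induction over $O(1/|\pi^n|)$ slabs could a priori blow up the constant; the resolution relies crucially on the concavity of $\varpi_{\circ}$, the lower bound $\varpi_{\circ}(x)\ge cx$, the $1$-Lipschitz property of $\Pi^n$, and a Gronwall-type summation that turns the product of per-slab factors $(1+C|t^n_{i+1}-t^n_i|)$ into $e^{CT}$.
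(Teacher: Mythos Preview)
Your overall architecture (uniform estimates $\Rightarrow$ convergence $\Rightarrow$ grid-independence) matches the paper, but the key technical mechanism you propose is different, and your Step 2 has a genuine gap.

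\textbf{Where the paper differs from you.} The paper does \emph{not} obtain the uniform modulus by per-slab doubling followed by backward induction over $O(1/|\pi^n|)$ slabs. Instead, it compares $v^n$ and $v^m$ (for $m\le n$, using $\pi^m\subset\pi^n$) in one shot: it shows by Ishii's lemma that the difference $w^{m,n}:=v^n-v^m$ is a viscosity subsolution, on each fine-grid slab, of a \emph{linear} HJB equation whose drift and diffusion coefficients are those of $F^n$ perturbed by $\Delta\mu^{m,n},\Delta\sigma^{m,n},\Delta r^{m,n}$, all of size $L\,\rho_t(\Pi^n_{t^n_i}[\xr],\Pi^m_{t^n_i}[\xr'])$. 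It then applies a \emph{global} Feynman--Kac representation on $[t,T]$ with a controlled pair $(X^\beta,X'^\beta)$ of diffusions, and bounds $\E[\|X^\beta-X'^\beta\|^2]$ by Gronwall on the SDE level. This yields directly
\[
|v^m(t,\xr,\xr_t)-v^n(t,\xr',\xr'_t)|\le C_K\,\varpi_\circ'\big(\rho_{\eta_m^+(t)}(\Pi^n[\xr\bp_{\eta_n^+(t)}\xr_t],\Pi^m[\xr'\bp_{\eta_m^+(t)}\xr'_t])+|\pi^m|^{1/4}+|\pi^n|^{1/4}\big),
\]
which is simultaneously the Cauchy estimate ($\xr=\xr'$), the spatial modulus ($m=n$), and the input for grid-independence. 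The time regularity is obtained similarly via the sub/super barriers from Assumption \ref{ass: F}(iv) and Feynman--Kac on $[t,t']$. So the ``Gronwall'' in the paper happens once, at the SDE level, not as a product over slabs; this is what makes the constant depend only on $L,T,K$.

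\textbf{The gap in your Step 2.} You propose Arzel\`a--Ascoli plus half-relaxed limits, then say the limit ``is characterized intrinsically'' and that you would ``run a doubling-of-variables comparison argument directly between two subsequential limits''. This is circular: there is no comparison principle available for the limiting path-dependent equation---indeed, building a workable solution notion for \eqref{eq:PPDE} is precisely the goal of the paper. The half-relaxed limits would give you sub/supersolution properties only of the \emph{path-frozen} finite-dimensional PDEs in the limit $|\pi^n|\to0$, which collapse and do not yield a usable equation on $\Theta$. Your parenthetical ``(or between $v^n$ and $v^m$)'' is the correct escape route and is exactly what the paper does, but you have not said how to compare solutions living on \emph{different} grids; the answer is the cross-grid subsolution argument above, embedding $v^m$ into the fine grid via $v^{m,n}_i(t,[\xr]^n_i,x):=v^m_{I^n_m(i)}(t,[\xr]^m_{I^n_m(i)},x)$ and then using that $F^n-F^{m,n}$ is controlled by $\rho$-distances of projections. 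Once you have that Cauchy estimate, Arzel\`a--Ascoli and any characterization of the limit are unnecessary: the full sequence converges pointwise. Your Step 3 via merged grids is essentially what the paper does (it sets $\tilde\pi^k=\pi^k$ for $k\le n$ and $\tilde\pi^k=\pi^n\cup\pi'^{k-n}$ for $k>n$ and applies the same Cauchy estimate twice), so that part is fine.
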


	\begin{Remark} \label{rem: vitesse conv vn to v}
		In the context of Theorem \ref{thm: existence + Lipschitz cas differentiable}, let $\vr$ be the $\pi$-approximate viscosity solution and $v^n$ be the $\pi^n$-viscosity solution of \eqref{eq:PPDE}-\eqref{eq: cond bord}. 
		As a by-product of  \eqref{eq:diff_vn_v2n} in the proof below, for all $K>0$, there exists a constant $C_{K}>0$ such that 
		\begin{align}\label{eq: vitesse conv vn cas diff}
			\big| v^{n}(t,\xr,\xr_{t})-   \vr(t,\xr) \big|
			~\le~
			C_{K} ~ 
			\varpi_{\circ}' \Big( \rho_{\eta^+_n(t) } \big(\Pi^n [ \xr \bp_{\eta^+_n(t) } \xr_t ], \xr \big) + |\pi^n|^{\frac14} \Big),
		\end{align}
		whenever $ \|\xr\| \le K$ (recall that $\eta^+_n(t)$ is defined in \eqref{eq:def_etan}).
	\end{Remark}

	Before to prove the above, let us state immediately the following stability result.

	\begin{Proposition}\label{prop : stability}
		Let $(F_{k},g_{k})_{k \ge 0}$ be a sequence satisfying the assumptions in Theorem  \ref{thm: existence + Lipschitz cas differentiable}, 
		uniformly in $k\ge 0$, and such that 
		$$
			\big(F_{k}(t_{k},\xr_{k}, y_{k}, z_{k}, \gamma_{k}),g_{k}(\xr_{k}) \big)
			~\longrightarrow~
			\big(F_0 (t ,\xr, y, z, \gamma), g_0(\xr) \big) 
			~~\mbox{as}~ k \longrightarrow \infty,
		$$
		whenever $(t_{k},\xr_{k}, y_{k}, z_{k}, \gamma_{k}) \longrightarrow (t ,\xr, y, z, \gamma)\in \Theta\x \R\x \R^{d}\x \S^{d}$. 
		Let $\vr_{k}$ be a $\pi$-approximate viscosity solution associated to $(F_{k},g_{k})$ for each $k \ge 0$. 
		Then,  there exists a subsequence $(k_{m})_{m\ge 1}$ such that $(\vr_{k_{m}})_{m\ge 1}$ converges {pointwise} to $\vr_0$.
	\end{Proposition}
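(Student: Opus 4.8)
The plan is to leverage the uniform a-priori estimates already available from Theorem~\ref{thm: existence + Lipschitz cas differentiable} together with a compactness (Arzelà--Ascoli type) argument, and then to identify the limit as a $\pi$-approximate viscosity solution via the stability of classical viscosity solutions. First I would recall that, since $(F_k,g_k)_{k\ge 0}$ satisfy the assumptions of Theorem~\ref{thm: existence + Lipschitz cas differentiable} uniformly in $k$, the constant $L$ in Assumptions~\ref{ass: F} and~\ref{ass:FH}, hence also the constants $C_K$ appearing in \eqref{eq: linear growth unif n approximate sol} and \eqref{eq: approximate sol is lipschitz cas diff}, can be chosen independently of $k$. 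Thus the family $(\vr_k)_{k\ge 0}$ is uniformly bounded on bounded subsets of $\Theta$ and is uniformly equicontinuous in the sense of \eqref{eq: approximate sol is lipschitz cas diff}: for every $K>0$, $|\vr_k(t,\xr)-\vr_k(t',\xr')|\le C_K\,\varpi_{\circ}'\big(\rho_t(\xr_{t\wedge},\xr'_{t\wedge})\big)+C_K\varpi_{\circ}'(|t'-t|^{1/2})$ for all $k$, whenever $\|\xr\|\vee\|\xr'\|\le K$.

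Next I would extract a pointwise-convergent subsequence. On the separable metric space $[0,T]\times\Om_t$ (separability noted in Remark~\ref{remark:separable_Om}(ii)), pick a countable dense set and use a diagonal argument to get a subsequence $(k_m)$ along which $\vr_{k_m}$ converges at every point of this dense set; the uniform modulus of continuity then upgrades this to pointwise (indeed locally uniform on bounded sets) convergence of $\vr_{k_m}$ to a limit $\vr_0$, which inherits the same growth and continuity estimates. The point to be a little careful about is that $\Theta$ with the $\rho$-metric need not be locally compact, but because the estimates \eqref{eq: approximate sol is lipschitz cas diff} hold on each ball $\{\|\xr\|\le K\}$ and such balls are $\rho$-separable, the diagonal-extraction scheme is enough; no full local compactness is required.

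It then remains to check that $\vr_0$ is a $\pi$-approximate viscosity solution associated to $(F_0,g_0)$. For each fixed $n$, let $v^{k,n}$ denote the $\pi^n$-viscosity solution of the PDE with generator $F_k^n$ and terminal data $g_k(\Pi^n[\cdot])$; by Proposition~\ref{prop: comparison n-visco} these exist, are unique, and have linear growth uniform in $k$ and $n$. By the classical stability of viscosity solutions under locally uniform convergence of the generators and terminal data (the hypothesised convergence $F_k\to F_0$, $g_k\to g_0$ transfers to $F_k^n\to F_0^n$ and $g_k(\Pi^n[\cdot])\to g_0(\Pi^n[\cdot])$ because $\Pi^n$ is continuous), combined with the comparison principle of Proposition~\ref{prop: comparison n-visco}(i) to pin down the limit, one gets $v^{k,n}\to v^{0,n}$ pointwise as $k\to\infty$, where $v^{0,n}$ is the $\pi^n$-viscosity solution for $(F_0,g_0)$. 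Finally, using the convergence rate \eqref{eq: vitesse conv vn cas diff} from Remark~\ref{rem: vitesse conv vn to v}, which is uniform in $k$, one has $\sup_k|v^{k,n}(t,\xr,\xr_t)-\vr_k(t,\xr)|\le C_K\,\varpi_{\circ}'\big(\rho_{\eta_n^+(t)}(\Pi^n[\xr\bp_{\eta_n^+(t)}\xr_t],\xr)+|\pi^n|^{1/4}\big)\to 0$ as $n\to\infty$; a standard three-$\varepsilon$ argument interchanging the $k\to\infty$ and $n\to\infty$ limits then yields $\vr_0(t,\xr)=\lim_{n}v^{0,n}(t,\xr,\xr_t)$, i.e.\ $\vr_0$ is the $\pi$-approximate viscosity solution for $(F_0,g_0)$ (which by Theorem~\ref{thm: existence + Lipschitz cas differentiable}(i) is unique, so the full sequence $\vr_{k_m}$ converges to it).

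The main obstacle I anticipate is the double-limit interchange: one must show $\lim_{k}\lim_{n}v^{k,n}=\lim_{n}\lim_{k}v^{k,n}$, and the clean way to do this is precisely to invoke the $n$-uniform (and $k$-uniform) rate \eqref{eq: vitesse conv vn cas diff}, which makes the convergence $v^{k,n}\to\vr_k$ uniform in $k$ on bounded sets; without such a uniform rate the argument would be considerably more delicate. A secondary technical point is justifying the diagonal extraction on the non-locally-compact path space, handled as above by restricting to $\rho$-separable bounded balls and using the uniform modulus of continuity.
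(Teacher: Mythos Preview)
Your proof is correct and rests on the same three ingredients as the paper's: classical stability of viscosity solutions for each fixed $n$ (giving $v^{k,n}\to v^{0,n}$), the convergence rate \eqref{eq: vitesse conv vn cas diff} uniform in $k$, and equicontinuity together with separability of $\Theta$. The paper packages these slightly differently: rather than first extracting a convergent subsequence via Arzel\`a--Ascoli and then identifying the limit, it diagonalizes directly over the grid index $n$ and a countable dense set $\Theta_\circ\subset\Theta$ to produce a subsequence $(k_n)_{n\ge 1}$ with $v^n_{k_n}-v^n_0\to 0$ on $\Theta_\circ$, then applies \eqref{eq: vitesse conv vn cas diff} and \eqref{eq: approximate sol is lipschitz cas diff} to conclude. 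Note, incidentally, that your three-$\varepsilon$ argument already yields convergence of the \emph{full} sequence $\vr_k\to\vr_0$ pointwise (fix $(t,\xr)$, choose $n$ large so that $|v^{k,n}-\vr_k|+|v^{0,n}-\vr_0|<\varepsilon$ for all $k$, then send $k\to\infty$); your preliminary Arzel\`a--Ascoli extraction is therefore superfluous.
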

	\proof For each $k \ge 0$, let $v^{n}_{k}$ be the $\pi_{n}$-viscosity solution associated to $(F_{k},g_{k})$. 
	Then, for each fixed $n \ge 1$, it follows by stability  of the viscosity solutions of classical PDEs that $ v^n_k  \longrightarrow v^n_0$ pointwise as $k\longrightarrow \infty$.
	
	\vspace{0.5em}

	In view of Remark \ref{remark:separable_Om}, one can find a countable subset $\Theta_{\circ}$ of $\Theta$ 
	such that, for any $(t, \xr) \in \Theta$,
	there exists a sequence $(t_i, \xr_i)_{i \ge 1} \subset \Theta_{\circ}$ satisfying
	\begin{equation} \label{eq:density_Theta_0}
		 \rho_{t_i} \big( \xr_i, \xr \big) + |t_i - t|^{1/2} ~\longrightarrow~ 0.
	\end{equation}
	Then, by a standard diagonalization argument, we can find a subsequence $(k_{n})_{n\ge 1}$ such that $|v^{n}_{k_{n}}-v^{n}_0| \longrightarrow 0$ {pointwise} on the countable set $\Theta_{\circ}$.
	On the other hand, \eqref{eq: vitesse conv vn cas diff} implies that, for each $(t, \xr) \in \Theta_{\circ}$, there is some constant $C_K > 0$ such that, for all $n\ge 1$,
	\begin{align*}
		|\vr_{k_{n}}-\vr_0| (t,\xr)
		~\le~
		|v^{n}_{k_{n}}-v^{n}_0|(t,\xr,\xr_{t})
		+
		2 C_K
		\varpi_{\circ}' \Big( \rho_{\eta^+_n(t) } \big(\Pi^n [ \xr \bp_{\eta^+_n(t) } \xr_t ], \xr \big) + |\pi^n|^{{\frac14}} \Big).
	\end{align*}
	Therefore, by Remark \ref{remark:separable_Om}, one has $\vr_{k_{n}} \longrightarrow \vr_0$ pointwise on $\Theta_{\circ}$.
	Finally, by \eqref{eq: approximate sol is lipschitz cas diff} and \eqref{eq:density_Theta_0}, it follows that $\vr_{k_{n}}\longrightarrow \vr_0$ {pointwise} on $\Theta$. 
	\endproof

	\vspace{0.5em}

	We now complete the proof of Theorem \ref{thm: existence + Lipschitz cas differentiable}. The key ingredient is to provide a uniform estimate on the difference $v^m- v^n$ when $m, n \longrightarrow \infty$,
	where $v^m$ (resp. $v^n$) is the $\pi^{m}$-(resp. $\pi^{n}$-)viscosity solution of \eqref{eq:PPDE} with terminal condition \eqref{eq: cond teminale vn}.
	
	\vspace{0.5em}
	
	By \eqref{eq: form f} and Definition \ref{def:pi_n_solution}, it is clear that $v^n(t, \xr, x) $ depends only on $(\xr_{t^n_j})_{j=0, \cdots, i}$ and $x$ when $t \in [t^n_i, t^n_{i+1})$.
	For this reason, for each $n \ge 1$ and $t \in [t^n_i, t^n_{i+1})$, we introduce $v^n_i: [t^n_i, t^n_{i+1}) \x (\R^d)^{i+1} \x \R^d \longrightarrow \R$ defined by
	\begin{align}\label{eq: def vni} 
		v^{n}_{i} \big(t,[\xr]^{n}_{i},x \big)
		~:=~
		v^{n}\big( t, \Pi^n_{t^{n}_{i}} [\xr], x \big),
	\end{align}
	and $F^{n}_{i} : [t^{n}_{i},t^{n}_{i+1})\x (\R^{d})^{i+1}\x \R^{d} \longrightarrow \R$ as well as $g^{n} :(\R^{d})^{n+1} \longrightarrow \R$ defined as 
	\begin{equation} \label{eq:def_Fngn}
		F^{n}_{i} \big( t,[\xr]^{n}_{i},\cdot \big)
		~:=~
		F \big( t, \Pi^n_{t^{n}_{i}} [\xr],\cdot \big),
		~~~
		g^{n} \big([\xr]^{n}_{ n} \big)
		~:=~
		g \big( \Pi^n[\xr] \big).
	\end{equation}
	We similarly define $r^n_i$, $\mu^n_i$, $\sigma^n_i$, i.e.
	$$
		r^{n}_{i} \big( t,[\xr]^{n}_{i} \big)
		:=
		r \big(t, \Pi^n_{t^{n}_{i}} [\xr] \big),
		~~~
		\mu^{n}_{i} \big(t,[\xr]^{n}_{i} \big)
		:=
		\mu \big(t, \Pi^n_{t^{n}_{i}} [\xr] \big),
		~~~
		\sigma^{n}_{i} \big(t,[\xr]^{n}_{i} \big)
		:=
		\sigma \big(t, \Pi^n_{t^{n}_{i}} [\xr] \big).
	$$
	Then, by Definition \ref{def:pi_n_solution}, $(t,x) \longmapsto v^n_i(t, [\xr]^n_i, x)$ is the unique viscosity solution of the classical PDE
	\begin{equation} \label{eq:PDE_vni}
		- \partial_t v^n_i \big(t, [\xr]^n_i, x \big)  - F^n_i \big( t, [\xr]^n_i, v^n_i, D v^n_i, D^2 v^n_i \big) = 0,
		~~(t,x) \in [t^n_i, t^n_{i+1}) \in \R^d,
	\end{equation}
	with terminal condition 
	$$
		v^n_i \big(t^{n}_{i+1}, [\xr]^n_i, x \big) = v^n_{i+1} \big(t^{n}_{i+1}, {([\xr]^n_i, x)}, x \big),
		~
		i=0, \cdots, n-2,
		~~~
		v^n_n \big(t^n_{n}, [\xr]^n_{n} \big) = g^n \big( [\xr]^n_n \big),
	$$
	where $D v^n_i$ (resp. $D^2 v^n_i$) represents the gradient (resp.~Hessian matrix) of $x \longmapsto v^n_i(t, [\xr]^n_i, x)$.

	We first study the stability of $(v^{n})_{n\ge 1}$ with respect to the space arguments. 
	
	\begin{Lemma} \label{lem:Unif_conti_vni}
		Let the conditions of Theorem \ref{thm: existence + Lipschitz cas differentiable}   hold,
		and let $v^n$ be the $\pi^{n}$-viscosity solution of \eqref{eq:PPDE} satisfying the terminal condition \eqref{eq: cond teminale vn} for each $n \ge 1$.
		Then, for all $K > 0$,
		there exists a constant $C_K > 0$, such that, for all $m, n \ge 1$, $t \in [0,T]$, and $\xr, \xr' \in \Om$ satisfying $\|\xr_{t \wedge} \| + \|\xr'_{t \wedge}\| \le K$,
		\begin{align} \label{eq: regu en espace de vn}
			\big| v^m(t,\xr, \xr_t)- v^n(t,\xr', \xr'_t) \big| 
			\le &
			C_K \varpi_{\circ}' \Big( \rho_{\eta^+_m(t) } \big(\Pi^n [ \xr \bp_{\eta^+_n(t) } \xr_t ], \Pi^m [ \xr' \bp_{\eta^+_m(t) } \xr_t'] \big) 
			\!+\! 
			|\pi^m|^{\frac14}  
			\!+\!
			|\pi^n|^{\frac14} \Big).
		\end{align}
	\end{Lemma}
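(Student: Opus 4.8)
The plan is to prove the estimate by a backward induction on $i$ over the time grid $\pi^m \cup \pi^n$ (or, more conveniently, first prove it for a common refinement and then compare each of $v^m$, $v^n$ with the solution on the refinement, so that it suffices to treat the case where one grid is a refinement of the other). Thus I would first reduce to comparing $v^n$ with $v^{n'}$ where $\pi^n \subset \pi^{n'}$, and ultimately to estimating $v^n(t,\xr,\xr_t) - v^{n'}(t,\xr',\xr'_t)$ in terms of $\varpi_\circ'\big(\rho_{\eta^+_n(t)}(\Pi^n[\cdots],\Pi^{n'}[\cdots]) + |\pi^n|^{1/4} + |\pi^{n'}|^{1/4}\big)$. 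The two displays \eqref{eq: regu en espace de vn} and \eqref{eq:diff_vn_v2n} (the latter presumably the $m=2n$ or refinement instance referenced in Remark \ref{rem: vitesse conv vn to v}) will follow from this by the triangle inequality through the common refinement $\pi^n \cup \pi^m$.

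The core of the argument is a one-step estimate on each interval $[t^n_i,t^n_{i+1})$. Fix the frozen past $[\xr]^n_i$ and $[\xr']^n_i$ (with the corresponding $\Pi^n$, $\Pi^{n'}$ interpolations/approximations). On this interval the PDE \eqref{eq:PDE_vni} has coefficients of the form in Assumption \ref{ass:FH}: a linear part $r y + \mu\cdot z + \tfrac12\mathrm{Tr}[\sigma\sigma^\top\gamma]$ with Lipschitz (in $\xr$, hence in the frozen data via $\rho_t$) coefficients, plus a bounded-Lipschitz nonlinearity $H$. I would exploit the probabilistic representation already set up in the proof of Proposition \ref{prop: comparison n-visco}: on $[t^n_i,t^n_{i+1})$, $v^n_i$ is the value of a controlled FBSDE whose forward diffusion has coefficient $\sigma^n_i(\cdot,[\xr]^n_i)$ frozen, driver $H + \text{linear}$, and terminal data $v^n_{i+1}$ at $t^n_{i+1}$. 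Comparing this with the corresponding FBSDE for $v^{n'}$: the forward processes differ by the difference in frozen coefficients, which is controlled by $L\,\rho(\text{frozen paths})$ and by the incremental oscillation of the true path over a mesh interval; standard SDE estimates give an $L^2$-bound of order $(\text{coefficient difference})^2 \cdot |\pi^n| + (\text{increment})^2$, and the $|\pi^n|^{1/4}$ in the statement is exactly what absorbs $|\pi^n|^{1/2}$-type mesh errors after passing through the concave modulus (using $\varpi_\circ'(x)=\sqrt{\varpi_\circ(x^2)}$ and $\varpi_\circ(x)\ge cx$). Then standard BSDE stability (à la \cite{el1997backward}), together with the Lipschitz/modulus hypotheses \eqref{eq: hyp lip rho mu sig g}–\eqref{eq: hyp borne rho}, propagates the terminal estimate $|v^n_{i+1}-v^{n'}_{\cdot}|$ backward to $t^n_i$ with a multiplicative constant $e^{C|\pi^n|}$ and an additive mesh error. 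Iterating over the $\sim n$ intervals turns the $e^{C|\pi^n|}$ factors into a bounded constant $e^{CT}$, and sums the additive errors (each $O(|\pi^n|)$ times a modulus, plus genuine path-oscillation terms that telescope into $\rho_{\eta^+_n(t)}$-type quantities) into the final bound. The localization $\|\xr_{t\wedge}\|+\|\xr'_{t\wedge}\|\le K$ enters because the Lipschitz constant of $F$ in $(y,z,\gamma)$ and the linear-growth constants depend on $K$, via Assumption \ref{ass: F}(i); the a priori linear-growth bound \eqref{eq: linear growth unif n approximate sol} guarantees the FBSDE solutions stay in a $K$-dependent ball so the constants are uniform in $m,n$.

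Alternatively — and this may be cleaner to write — one can avoid the probabilistic representation entirely and run a purely PDE/viscosity argument: build explicit sub- and super-solutions on each $[t^n_i,t^n_{i+1})$ of the form $v^{n'}_{\cdot} \pm \big(C_K \varpi_\circ'(\text{data distance}) + C_K|\pi|^{1/4}\big) e^{\lambda(t^n_{i+1}-t)}$, verify via Assumption \ref{ass: F}(i)–(iii) (Lipschitz in $(y,z,\gamma)$, degenerate ellipticity) and Assumption \ref{ass:FH} that these are genuine viscosity sub/super-solutions of \eqref{eq:PDE_vni} with $F^n_i$ once $\lambda$ is large enough relative to $L$, check the terminal inequality at $t^n_{i+1}$ by the induction hypothesis, and invoke the comparison principle of Proposition \ref{prop: comparison n-visco}(i) on that interval; then induct backward. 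In this route the perturbation one must add per step to dominate the difference between $F^n_i$ and $F^{n'}_{i'}$ (frozen at slightly different paths and resolutions) is $L\,\rho_t(\text{frozen data}) + (\text{mesh oscillation of the driver in }t)$, and the main bookkeeping is showing these sum correctly over $i$ into the right-hand side of \eqref{eq: regu en espace de vn}.

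The main obstacle I anticipate is the careful accounting of the two distinct error sources — the genuine discrepancy between the two piecewise approximations $\Pi^n[\xr]$ and $\Pi^{m}[\xr']$ of possibly different paths, versus the "self-error" of approximating a path by its own $\Pi^n$-interpolation on the last partially-elapsed interval $[t^n_{\eta(t)}, t)$ — and in particular justifying that the mesh-error terms enter only at the $|\pi^n|^{1/4}$ order (not $|\pi^n|^{1/2}$ without the modulus, nor worse). This requires: (a) controlling $\rho_t(\Pi^n[\xr],\xr_{t\wedge})$ on a mesh interval using modulus-of-continuity estimates for the true path — delicate because $\xr$ is merely continuous/càdlàg with no a priori modulus, so one must keep $\rho_{\eta^+_n(t)}$ unevaluated in the final bound rather than estimate it away; and (b) handling the Skorokhod-metric case $\Om=\DT$, where the time-change $\lambda$ in \eqref{eq:def_rhoD} and the measure $\mu$ (with atoms on $\pi^n$) interact with the piecewise-constant projection — the choice of atoms of $\mu$ inside $\pi^n$ is what makes the integral term in $\rho$ behave well under $\Pi^n$, and this must be used explicitly.
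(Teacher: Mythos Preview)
Your proposal has a genuine gap in the first (probabilistic) route. You write that ``on $[t^n_i,t^n_{i+1})$, $v^n_i$ is the value of a controlled FBSDE\ldots''. Under Assumptions \ref{ass: F} and \ref{ass:FH} this is not available: the nonlinearity $H$ is only assumed Lipschitz and monotone in $\gamma$, not of sup/inf form, so $v^n$ itself need not admit a 2BSDE or controlled-FBSDE representation. The representation in the proof of Proposition \ref{prop: comparison n-visco} is for the \emph{barriers} $\underline u^n,\overline u^n$ built from Assumption \ref{ass: F}(iv), not for $v^n$. Likewise, your alternative barrier $v^{n'}\pm C_K\varpi_\circ'(\cdots)e^{\lambda(\cdot)}$ fails as a sub/super-solution of \eqref{eq:PDE_vni} with generator $F^n$: the discrepancy $F^n-F^{n'}$ contains the terms $(\mu^n-\mu^{n'})\cdot z+\tfrac12\mathrm{Tr}[(\sigma^n\sigma^{n\top}-\sigma^{n'}\sigma^{n'\top})\gamma]$, which is unbounded in $(z,\gamma)$, so a shift that is constant in $x$ cannot absorb it without a priori gradient/Hessian bounds on $v^{n'}$ that you do not have.

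The paper's proof supplies exactly the missing mechanism. One \emph{doubles the space variable} and studies
\[
w^{m,n}_i(t,[\xr]^n_i,[\xr']^n_i,x,x') := v^n_i(t,[\xr]^n_i,x)-v^{m,n}_i(t,[\xr']^n_i,x'),
\]
where $v^{m,n}_i$ is $v^m$ reindexed on the finer grid $\pi^n$ (so your refinement reduction is correct in spirit). Applying Ishii's lemma to a smooth test function $\phi(t,x,x')$ at a contact point and linearizing the Lipschitz $H$ yields that $w^{m,n}_i$ is a viscosity \emph{subsolution} of an HJB equation in the doubled variable $(x,x')\in\R^{2d}$, with control set $B=[-L,L]^2\times[0,\sqrt L]$ coming from the Lipschitz constants of $H$, linear part built from $(r^n,\mu^n,\sigma^n)$ and the increments $\Delta\mu^{m,n},\Delta\sigma^{m,n}$, and a source term bounded by $C(1+\|\xr'\|+|x'|)\,\varpi_\circ'(\rho_t(\Pi^n_{t^n_i}[\xr],\Pi^m_{t^n_i}[\xr']))$. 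It is this \emph{difference} equation, not the equation for $v^n$, that has the controlled-diffusion Feynman--Kac representation. One then introduces coupled processes $X^\beta,X'^\beta$ driven by the same Brownian motion, with $X'^\beta-X^\beta$ governed only by $\Delta\mu^{m,n},\Delta\sigma^{m,n}$; a Gronwall argument gives $\E[\|X^\beta-X'^\beta\|^2]\le C_K(\rho_{\eta^+_m(t)}(\cdots)^2+|\pi^m|^{1/2}+|\pi^n|^{1/2})$, and pushing this through the concave $\varpi_\circ$ (Jensen) produces the $\varpi_\circ'(\cdots+|\pi^m|^{1/4}+|\pi^n|^{1/4})$ in \eqref{eq: regu en espace de vn}. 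There is no step-by-step backward iteration with $e^{C|\pi^n|}$ factors; the boundary conditions of $w^{m,n}_i$ at $t^n_{i+1}$ glue the per-interval HJB equations into a single representation over $[t,T]$, and the estimate is obtained in one shot. Your intuition about the two error sources and the role of the localization $\|\xr\|\le K$ is correct, but the engine that makes it run is the doubling/Ishii step.
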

	\proof We restrict to the one dimensional case $d=1$ for ease of notations. 
	Given $m \le n$, as $\pi^m \subset \pi^n$, the vector $[\xr]^n_n$ contains all the information of $[\xr]^m_m$.
	We will then consider a functional of $[\xr]^m_{\cdot}$ as a functional of $[\xr]^n_{\cdot}$.
	For this purpose, let us introduce, for all $i \le n-1$, $t\in [t^{n}_{i},t^{n}_{i+1})$, $\xr \in \Om$ and $x \in \R$,
	\begin{equation} \label{eq:def_vmn}
		v^{m,n}_i \big(t, [\xr]^n_i, x \big)
		~:=~
		v^m_{I^n_m(i)} \big( t, [\xr]^m_{I^n_m(i)}, x \big),
	\end{equation}
	where $I^n_m(i) := \max\{j ~: t^m_j \le t^n_i \}$ for all  $i = 0, 1, \cdots, n$.
	Similarly, one defines the functionals
	$$
		F^{m,n}_i, ~ H^{m,n}_i,~ r^{m,n}_i, ~\mu^{m,n}_i, ~\sigma^{m,n}_i,~g^{m,n}.
	$$

	\noindent $\mathrm{(i)}$
	Given $i<n$, $(t,x,x')\in [t^{n}_{i},t^{n}_{i+1})\x\R^{2}$ and $\xr,\xr'\in \Omega$, set  
	$$
		w^{m,n}_i \big( t,[\xr]^{n}_{i}, [\xr']^{n}_{i}, x, x' \big)
		~:=~
		v^{n}_{i} \big( t,   [\xr]^{n}_{i},x  \big) - v^{m,n}_{i} \big( t,[\xr']^{n}_{i},x' \big).
	$$
	We first show that $ w^{m,n}_i(\cdot,[\xr]^{n}_{i}, [\xr']^{n}_{i}, \cdot,\cdot) $ is a viscosity subsolution of some HJB equation. 
	Let $\phi : [t_{i}^{n},t^{n}_{i+1}]\x \R^{2} \longrightarrow \R$ be a smooth function and $(\hat t,\hat x,\hat x')\in [t_{i}^{n},t^{n}_{i+1})\x \R^{2}$ be such that 
	\begin{align*}
		0~=~ &
		\max_{(t,x,x')\in [t_{i}^{n},t^{n}_{i+1})\x \R^{2}}\left(w^{m,n}_i \big( t,[\xr]^{n}_{i}, [\xr']^{n}_{i}, x, x' \big)-\phi(t,x,x')\right)\\
		~=~&
		w^{m,n}_i \big( \hat t,[\xr]^{n}_{i}, [\xr']^{n}_{i}, \hat x, \hat x' \big)-\phi(\hat t,\hat x,\hat x').
	\end{align*}
	Recall that $v^{n}_{i} $ is a viscosity solution (hence supersolution) of  \eqref{eq: PDE sur ti ti+1} with generator $F^n$
	and $v^{m,n}_i$ is a viscosity solution (hence subsolution) of  \eqref{eq: PDE sur ti ti+1} with generator $F^{m,n}$.
	Then, it follows from Ishii's lemma (see e.g. \cite[Theorem 3.2]{CrandallIshiiLions}) that,
	for all $\eps>0$, one can find $\gamma_{\eps}, \gamma'_{\eps} \in \R$ such that 
	\begin{align}
		\left(\begin{array}{cc}
		\gamma_{\eps}& 0\\
		0 & -\gamma'_{\eps}
		\end{array}\right)&\le D^{2} \phi(\hat t,\hat x,\hat x')+\eps \left(D^{2} \phi(\hat t,\hat x,\hat x')\right)^{2},
		\label{eq: inega IShii}
	\end{align}
	and 
	\begin{align}
		0 ~\ge~ & 
		-~ \partial_{t} \phi(\hat t,\hat x,\hat x')
		-
		F^{n} \big( \hat t,    [\xr]^{n}_{i}, v^{n}_{i} \big( \hat t,    [\xr]^{n}_{i},\hat x  \big) , \partial_{x}\phi(\hat t,\hat x,\hat x'), \gamma_{\eps} \big) \nonumber
		\\
		~&~+~
		F^{m,n} \big( \hat t,  [\xr]^{n}_{i} , v^{m,n}_{i} \big( \hat t, [\xr']^{n}_{i} ,\hat x'  \big) , -\partial_{x'}\phi(\hat t,\hat x,\hat x'), \gamma'_{\eps} \big).
		\label{eq: diff Hnm nega}
	\end{align}
	 We now estimate the r.h.s.~of \eqref{eq: diff Hnm nega}.
	By \eqref{eq: hyp lip H} and \eqref{eq: inega IShii}, {using the notation}  $\theta = (\hat t, [\xr]^n_i, \hat x, \hat x', \gamma_{\eps}, \gamma'_{\eps})$,
	\begin{align*}  
		\Delta H^{m,n}_{{\eps}}
		~:=~&
		-~H^{n} \big( \hat t,   [\xr]^{n}_{i}, v^{n}_{i} \big( \hat t,    [\xr]^{n}_{i},\hat x  \big) , \partial_{x}\phi(\hat t,\hat x,\hat x'), \gamma_{\eps} \big) \\
		&+~
		H^{m,n} \big(\hat t,  [\xr']^{n}_{i} , v^{m,n}_{i} \big( \hat t, [\xr']^{n}_{i} ,\hat x'  \big) , -\partial_{x'}\phi(\hat t,\hat x,\hat x'), \gamma'_{\eps} \big)
		\\
		~\ge~& 
		-~ L \Big( \varpi_{\circ}' \big( \rho_{\hat t} \big( \Pi^{n}_{t^n_i} [\xr] ,\Pi^{m}_{t^n_i} [\xr'] \big) \big)
			+  \big| \phi(\hat t,\hat x,\hat x') \big|
			+ \big|\partial_{x}\phi(\hat t,\hat x,\hat x')+\partial_{x'}\phi(\hat t,\hat x,\hat x') \big| 
		\Big)
		\\
		&-~ \partial_{\gamma}H^{n}(\theta) \Big(\Delta \phi(\hat t,\hat x,\hat x')+\eps \Delta^{2} \phi(\hat t,\hat x,\hat x')  \Big),
	\end{align*}
	for some  functional $\partial_{\gamma} H^n\ge 0$ bounded by $L$, and   where 
	$$
		\Delta \phi :=\partial^{2}_{xx}\phi +2 \partial^{2}_{xx'}\phi +\partial^{2}_{x'x'}\phi
		~~\mbox{and}~~
		\Delta^{2} \phi:=(1,1) \left(D^{2} \phi\right)^{2}\left(\begin{array}{cc}
		1\\
		1
		\end{array}\right).
	$$
	This implies that, with $B:= [-L,L]^2 \x [0,\sqrt{L}]$,
	\begin{align}
		&\liminf_{\eps\downarrow 0}\Delta H^{m,n}_{\eps} \nonumber \\
		\ge~& 
		\min_{(b^1, b^2, b^3) \in B}
			\Big(-b^{1} \phi(\hat t,\hat x,\hat x') -b^{2}\big(\partial_{x}\phi(\hat t,\hat x,\hat x')+\partial_{x'}\phi(\hat t,\hat x,\hat x')\big) 
			- |b^{3}|^{2}\Delta \phi(\hat t,\hat x,\hat x') \Big) \nonumber
		\\ &-~ L  \varpi_{\circ}' \big( \rho_{\hat t} \big(\Pi^{n}_{t^{n}_{i}}(\xr) ,\Pi^{m}_{t^{n}_{i}}(\xr') \big) \big).
		\label{eq: borne inf Delta Hnm}
	\end{align}

	Similarly,  by \eqref{eq: linear growth unif n approximate sol} and \eqref{eq: hyp lip rho mu sig g},
	\begin{align*}
		& 
		-r^{n}_{i} \big(\hat  t,[\xr]^{n}_{i}\big) v^{n}_{i} \big( \hat t,   [\xr]^{n}_{i},\hat x  \big)
		+
		r^{m,n}_{i} \big( \hat t, [\xr']^{n}_{i}\big) v^{m,n}_{i} \big( \hat t, [\xr']^{n}_{i} ,\hat x'  \big)
		\\
		\ge&~
		-r^{n}_{i} \big( \hat  t, [\xr']^{n}_{i} \big) \phi  \big(\hat   t, \hat  x,\hat  y \big) 
		~-~ 
		\Delta r^{m,n}_{i} (\hat  t,[\xr]^{n}_{i},[\xr']^{n}_{i}) v^{m,n}_{i} \big( \hat t, [\xr']^{n}_{i} ,\hat x'  \big) ,
	 \end{align*} 
	where
	{
	$$
	\Delta r^{m,n}_{i}(\hat t,[\xr]^{n}_{i},[\xr']^{n}_{i})
		~:=~
		r^{m,n}_{i}(\hat t, [\xr']^{n}_{i})- r^{n}_{i}(\hat t,[\xr]^{n}_{i})
		$$
		satisfies}
	\begin{equation}\label{eq: borne r n eps i par varrhob} 
		\big| \Delta r^{m,n}_{i}(\hat t,[\xr]^{n}_{i},[\xr']^{n}_{i}) \big|
		~\le~
		L \rho_{\hat t}  \big( \Pi^{n}_{t^n_i} [\xr], \Pi^{m}_{t^n_i} [ \xr']  \big),
	\end{equation}
	Further, one has
	\begin{align*}
		&
		-\mu^{n}_{i} \big( \hat t, [\xr]^{n}_{i}\big)   \partial_{x}\phi \big(\hat t,\hat x,\hat x' \big)
		+
		\mu^{m,n}_{i} \big(\hat  t, [\xr']^{n}_{i}\big)(-\partial_{x'}\phi \big(\hat t,\hat x,\hat x' \big))
		\\
		=~&
		-\mu^{n}_{i}(\hat t,[\xr]^{n}_{i})\left[ \partial_{x}\phi \big(\hat t,\hat x,\hat x' \big)+ \partial_{x'}\phi \big(\hat t,\hat x,\hat x' \big)\right]   	-
		\Delta \mu^{m,n}_{i}(\hat t,[\xr]^{n}_{i},[\xr']^{n}_{i})   \partial_{x'}\phi \big(\hat t,\hat x,\hat x' \big),
	\end{align*} 
	where 
		$$
		\Delta \mu^{m, n}_{i}(\hat t,[\xr]^{n}_{i},[\xr']^{n}_{i})
		~:=~
		 \mu^{m,n}_{i}(\hat t, [\xr']^{n}_{i})-\mu^{n}_{i}(\hat t,[\xr]^{n}_{i}),
	$$
	satisfies
	\begin{equation}\label{eq: borne mu Yb} 
		\big| \Delta \mu^{m, n}_{i}(\hat t,[\xr]^{n}_{i},[\xr']^{n}_{i})) \big|
		~\le~
		L \rho_{\hat t}  \big( \Pi^{n}_{t^n_i} [\xr], \Pi^{m}_{t^n_i} [ \xr']  \big).
	\end{equation}

	Finally,  by \eqref{eq: inega IShii},
	\begin{align*}
		& 
		{\liminf_{\eps\downarrow 0}}\big(-\sigma^{n}_{i} \big(\hat  t, [\xr]^{n}_{i}\big) ^{2} \gamma_{\eps}
		+
		\sigma^{m,n}_{i} \big( \hat t, [\xr']^{n}_{i}\big))^{2}(-\gamma'_{\eps})\big)
		\\
		\ge ~&
		-  \sigma^{n}_{i} \big( \hat t, [\xr]^{n}_{i}\big)^{2} \partial^{2}_{xx} \phi \big(\hat t,\hat x,\hat x' \big)
		 - \big[\sigma^{n}_{i} \big( \hat t, [\xr]^{n}_{i}\big) +\Delta \sigma^{m, n}_{i}(t,[\xr]^{n}_{i},[\xr']^{n}_{i})  \big]^{2}\partial^{2}_{x'x'} \phi \big(\hat t,\hat x,\hat x' \big)
		\\
		&-2 \sigma^{n}_{i} \big( \hat t, [\xr]^{n}_{i}\big)\big[\sigma^{n}_{i} \big( \hat t, [\xr]^{n}_{i}\big) +\Delta \sigma^{m, n}_{i}(t,[\xr]^{n}_{i},[\xr']^{n}_{i})  \big] \partial^{2}_{xx'} \phi \big(\hat t,\hat x,\hat x' \big),
	 \end{align*} 
	 where 
	$$
		\Delta \sigma^{m, n}_{i}(\hat t,[\xr]^{n}_{i},[\xr']^{n}_{i})
		~:=~
		 \sigma^{m,n}_{i}(\hat t, [\xr']^{n}_{i})-\sigma^{n}_{i}(\hat t,[\xr]^{n}_{i}),
	$$
	so that
	\begin{equation}\label{eq: borne sig Yb} 
		\big| \Delta \sigma^{m, n}_{i}(\hat t,[\xr]^{n}_{i},[\xr']^{n}_{i}) \big|
		~\le~
		L \rho_{\hat t}  \big( \Pi^{n}_{t^n_i} [\xr], \Pi^{m}_{t^n_i} [ \xr']  \big).
	\end{equation}
	{By arbitrariness of $\eps>0$,} it follows that $w^{m,n}_i \big( \cdot,[\xr]^{n}_{i}, [\xr']^{n}_{i},\cdot, \cdot \big)$ is a viscosity subsolution on $[t^{n}_{i},t^{n}_{i+1})\x \R^{2}$ of 
	\begin{align*}
		0
		~\ge~ 
		-~ \partial_{t} \phi(  t,  x,  x') 
		~-~
		\max_{b\in B} \Lc^{b}_{i}[t,[\xr]^{n}_{i},[\xr']^{n}_{i}] \phi(t,x,y) - f^{n}_{i}(t,[\xr]^{n}_{i},[\xr']^{n}_{i},x),
	\end{align*} 
	in which
	\begin{align}\label{eq: def fni} 
		f^{n}_{i}(t,[\xr]^{n}_{i},[\xr']^{n}_{i},x)
		~:=~
		C~ \big(1+\|\xr'\|+|  x'| \big) ~ \varpi_{\circ}' \big( \rho_t  \big( \Pi^{n}_{t^n_i} [\xr], \Pi^{m}_{t^n_i} [ \xr']  \big) \big),
	\end{align}
	for some $C$ large enough, and 
	\begin{align*}
		&
		\Lc^{b}_{i}[t,[\xr]^{n}_{i},[\xr']^{n}_{i}]  \phi( t,  x,  x' ) \\
		:=&~
		(b^{1}+r^{n}_{i}) \big(    t, [\xr]^{n}_{i} \big) \phi  \big(   t,  x,  x' \big)
		+
		\big( b^{2}+\mu^{n}_{i}(t,[\xr]^{n}_{i}) \big) \big(\partial_{x}\phi(  t,  x,  x')+\partial_{x'}\phi(  t,  x,  x')\big)  
		\\
		&+\Delta \mu^{m,n}_{i}(t,[\xr]^{n}_{i},[\xr']^{n}_{i})   \partial_{x}\phi \big(  t,  x,  x' \big)
		+\frac12  |\sqrt{2}b^{3}|^{2}\Delta \phi(  t,  x,  x')  
		+
		\frac12  \sigma^{n}_{i} \big(   t, [\xr]^{n}_{i}\big)^{2}  \partial^{2}_{xx} \phi \big( t,  x,  x' \big)
		 \\
		 &+  \frac12 \big[\sigma^{n}_{i} \big(   t, [\xr]^{n}_{i}\big) +\Delta \sigma^{m, n}_{i}(t,[\xr]^{n}_{i},[\xr']^{n}_{i})  \big]^{2}  \partial^{2}_{x'x'} \phi \big( t,  x,  x' \big)
		\\
		&+ 
		\sigma^{n}_{i} \big(   t, [\xr]^{n}_{i}\big) \big[\sigma^{n}_{i} \big(   t, [\xr]^{n}_{i}\big) +\Delta \sigma^{m, n}_{i}(t,[\xr]^{n}_{i},[\xr']^{n}_{i})  \big]\partial^{2}_{xx'} \phi \big( t,  x,  x' \big).
	\end{align*}
 Moreover, it satisfies the boundary condition 
     \begin{align*} 
		&\lim_{t\uparrow t^{n}_{i+1}}  w^{m,n}_i \big( t,[\xr]^{n}_{i}, [\xr']^{n}_{i}, x, x' \big)
		~=~
		w^{m,n}_{i+1} \big(t^{n}_{i+1}, ([\xr]^{n}_{i}, x), ([\xr']^{n}_{i}, x'),  x, x' \big).
	\end{align*}

	\noindent $\mathrm{(ii)}$
	Given $(t,x,x')\in [t^{n}_{i},t^{n}_{i+1})\x \R^{2}$, $i<n$, and $\beta\in \Bc$, the collection of predictable $B$-valued processes, 
	on some probability space endowed with the augmented filtration of a two dimensional Brownian motion $W=(W^{1},W^{2})$, 
	let us now define the two processes $X^{\beta}$ and $X'^{\beta}$ by 
	\begin{align*}
		X^{\beta}=&
		\big [\xr \bp_{t}x \big]_{t \wedge} 
		+\!
		\int_{t}^{\cdot} \big[\beta^{2}_{s}+\mu \big(s, \Pi^{n}_{\eta_n(s)} [X^{\beta}] \big) \big] ds  
		+\!
		\int_{t}^{\cdot} \sqrt{2}\beta^{3}_{s} dW^{1}_{s}
		+\!
		\int_{t}^{\cdot} \sigma \big(s, \Pi^{n}_{\eta_n(s)} [X^{\beta}] \big) dW^{2}_{s},\\
		X'^{\beta}=& 
		\big[ \xr' \bp_{t}x' \big]_{t \wedge} 
		+
		\int_{t}^{\cdot }dX^{\beta}_{r}
		+
		\int_{t}^{\cdot} \Delta \mu \big( s, \Pi^{n}_{\eta_n(s)}[ X^{\beta}],\Pi^{m}_{\eta_m(s)} [ X'^{\beta}] \big) ds  \\
		&+~ \int_{t}^{\cdot}\Delta \sigma \big(s, \Pi^{n}_{\eta_n(s)} [ X^{\beta}],\Pi^{m}_{\eta_m(s)} [ X'^{\beta}] \big) dW^{2}_{s}.
	\end{align*}
	Assume that $\|\xr\|\vee \|\xr'\|\vee |x| \vee |x'|\le K$.
	We can find $C_{K}$, that only depends on $L$ and $K$, such that 
	$\E \big[ \big\|X^{\beta} \big\|^{2} + \big\| X'^{\beta} \big\|^{2} \big] \le C_{K}$,
	\begin{equation} \label{eq:X_PiX_diff}
		\E \Big[ 
			\big\|  \Pi^n \big[X^{\beta}\big]_{\eta^+_n(t) \vee} - X^{\beta}_{\eta^+_n(t) \vee} \big\|^2
		\Big]
		+
		\E \Big[ 
			\big\|  \Pi^m \big[X'^{\beta}\big]_{\eta^+_m(t) \vee} - X'^{\beta}_{\eta^+_m(t) \vee} \big\|^2
		\Big]
		\le
		C_{K} \big( |\pi^m|^{\frac12} + |\pi^n|^{\frac12} \big),
	\end{equation}
	and 	
	$$
		\E \Big[ 
			\big\|  \Pi^n \big[X^{\beta}\big]_{\eta^+_m(t) \wedge} - \Pi^n \big[ \xr \bp_{\eta^+_n(t)} x \big] \big\|^2
			+
			\big\|  \Pi^m \big[X'^{\beta}\big]_{\eta^+_m(t) \wedge} - \Pi^m \big[ \xr' \bp_{\eta^+_m(t)} x' \big] \big\|^2
		\Big]
		\le
		C_K |\pi^m|.
	$$
	Applying It\^o formula on $(X^{\beta}- X'^{\beta})^2$, together with \eqref{eq: hyp lip rho mu sig g}, \eqref{eq: borne mu Yb}, \eqref{eq: borne sig Yb} and \eqref{eq:X_PiX_diff}, 
	one can deduce that, for some constant $C_K' > 0$,
	\begin{align*}
		\E \Big[ \big \|X^{\beta}_{s\wedge} - X'^{\beta}_{s\wedge} \big\|^{2} \Big] 
		~\le~&
		|x-x'|^2 
		+
		C'_K \int_{\eta_m^+(t)}^s \E \Big[ \big \|X^{\beta}_{r \wedge} - X'^{\beta}_{r \wedge} \big\|^{2} \Big]  dr \\
		&+
		C'_K  \Big( \rho_{\eta^+_m(t) } \big(\Pi^n [ \xr \bp_{\eta^+_n(t) }x ], \Pi^m [ \xr' \bp_{\eta^+_m(t) }x'] \big)^2
			+ |\pi^m|^{\frac12} + |\pi^n|^{\frac12} \Big),
	\end{align*}
	and it follows by Gronwall's Lemma that, for some $C_K'' > 0$,
	\begin{align}\label{eq: ecart znm zn}
		\E \Big[ \big \|X^{\beta}_{\eta^+_m(t)\vee}-X'^{\beta}_{\eta^+_m(t)\vee} \big\|^{2} \Big]
		\le
		C_K'' \Big( \rho_{\eta^+_m(t) } \big(\Pi^n [ \xr \bp_{\eta^+_n(t) }x ], \Pi^m [ \xr' \bp_{\eta^+_m(t) }x'] \big)^2 + |\pi^m|^{\frac12} + |\pi^n|^{\frac12} \Big). 
	\end{align}
	By the Feynman-Kac formula, the  viscosity subsolution property of $w^{m,n}_i$, together with  \eqref{eq: def fni}, \eqref{eq: borne r n eps i par varrhob}, \eqref{eq: hyp borne rho} and the fact that $r$ is bounded by Assumption \ref{ass:FH}, 
	we deduce that 
	\begin{align*} 
		w^{m,n}_i \big( t,[\xr]^{n}_{i}, [\xr']^{n}_{i}, x, x' \big)
		~\le~
		\sup_{\beta\in \Bc}
		\E \Big[
			e^{\int_{t}^{T} k^{\beta}_{s}ds} ~L 
			\varpi_{\circ}' \big( \rho \big( \Pi^{n} [X^{\beta}], \Pi^{m} [X'^{\beta}] \big) \big)
			+
			\int_{t}^{T} e^{\int_{t}^{r} k^{\beta}_{s}ds} \zeta^{\beta}_{r}dr
		\Big],
	\end{align*}
	where, for all $\beta \in \Bc$, $k^{\beta}$ and $\zeta^{\beta}$ are predictable processes such that $|k^{\beta}|\le L$ and 
	$$
		|\zeta^{\beta}_r|
		~\le~
		L (1 + C_{K}) ~  
		\varpi_{\circ}' \big( \rho_r  \big(  \Pi^{n}_{\eta_n(r)} [  X^{\beta}], \Pi^{m}_{\eta_m(r)} [ X'^{\beta}] \big) \big)
		\big(1+\|X'^{\beta}\| \big) .
	$$
	Recalling that $\rho(\xr, \xr') \le C\|\xr- \xr'\|$, \eqref{eq: ecart znm zn} and that the function  $\varpi_{\circ}$ is concave,
	one can compute that
	\begin{align*}
		&~ \E \Big[ \varpi_{\circ}' \big( \rho_r  \big( \Pi^{n}[  X^{\beta}], \Pi^{m}[ X'^{\beta}] \big) \big)^2 \Big] \\
		\le~&
		C \varpi_{\circ} \Big( 
			\rho_{\eta^+_m(t) } \big(\Pi^n [ \xr \bp_{\eta^+_n(t) }x ], \Pi^m [ \xr' \bp_{\eta^+_m(t) }x'] \big)^2
			+
			\E \Big[ \big \| \Pi^{n}[  X^{\beta}]_{\eta^+_m(t) \vee} - \Pi^{m}[ X'^{\beta}]_{\eta^+_m(t) \vee} \big\|^{2} \Big]
		\Big) \\
		\le~&
		C_K \varpi_{\circ} \Big( 
			\rho_{\eta^+_m(t) } \big(\Pi^n [ \xr \bp_{\eta^+_n(t) }x ], \Pi^m [ \xr' \bp_{\eta^+_m(t) }x'] \big)^2 
			+
			|\pi^m|^{\frac12} 
			+ 
			|\pi^n|^{\frac12} 
		\Big).
	\end{align*}
	It follows that, for some constant $C_K > 0$, depending only on $K$,
	$$
		w^{m,n}_i \big( t,[\xr]^{n}_{i}, [\xr']^{n}_{i}, x, x' \big)
		\le
		C_K \varpi_{\circ}' \Big( 
			\rho_{\eta^+_m(t) } \big(\Pi^n [ \xr \bp_{\eta^+_n(t) }x ], \Pi^m [ \xr' \bp_{\eta^+_m(t) }x'] \big)
			+
			|\pi^m|^{\frac14}
			+ 
			|\pi^n|^{\frac14} 
		\Big).
	$$

	\noindent $\mathrm{(iii)}$ Finally, the corresponding lower-bound is derived similarly after exhibiting a viscosity supersolution property by the same arguments. 
	\endproof
	
	\begin{Remark} \label{rem:vn_Lipschitz}
	When $m = n$, one can bound $\E \big[\| \Pi^n[X^{\beta}]_{\eta^+_n(t) \vee} - \Pi^n[X'^{\beta}]_{\eta^+_n(t) \vee} \|^2\big]$ by
	$\E \big[ \|  X^{\beta}_{\eta^+_n(t) \vee} - X'^{\beta}_{\eta^+_n(t) \vee} \| \big]$ without using \eqref{eq:X_PiX_diff}.
	Letting $\varpi_{\circ}'(x) \equiv \varpi_{\circ}(x) \equiv x$ for all $x \ge 0$, the same argument will yield the estimation:
	for some constant $C_K > 0$,
	$$
		\big| v^n(t^n_i, \xr, \xr_{t^n_i}) - v^n(t^n_i, \xr', \xr'_{t^n_i}) \big|
		\le
		C_K \rho_{t^n_i} \big( \Pi^n[\xr], \Pi^n[\xr'] \big),
		~\mbox{when}~\| \xr\| + \|\xr'\| \le K.
	$$
	Moreover, by considering the PDE satisfied by $v^n$ on $[t^n_i, t^n_{i+1}] \x \R^d$, one can easily deduce that $v^n(t, \xr, x)$ is locally Lipschitz in $x$.
	More precisely, one can deduce that, for some constant $C_K > 0$,
	$$
		\big| v^n(t^n_i, \xr, \xr_{t^n_i}) - v^n(t^n_i, \xr', \xr'_{t^n_i}) \big|
		\le
		C_K \big\| \Pi^n_t[\xr]  - \Pi^n_t[\xr']  \big \|,
		~\mbox{when}~\| \xr\| + \|\xr'\| \le K.
	$$
	\end{Remark}
	
	We now discuss the stability with respect to the time argument. 
	\begin{Lemma} \label{lem:Unif_conti_vni_t}
		Let the conditions of Theorem \ref{thm: existence + Lipschitz cas differentiable}   hold 
		and  $v^n$ be the $\pi^{n}$-viscosity solution of \eqref{eq:PPDE} satisfying the terminal condition \eqref{eq: cond teminale vn} for each $n \ge 1$.
		Then, for all $K> 0$, there exists a constant $C_{K} > 0$,
		such that, for all $n \ge 1$, $0 \le t \le t' \le T$ and $\| \xr \| \le K$,
		\begin{equation} \label{eq:vn_holder_t}
		 \big| v^n(t',\xr_{t \wedge }, \xr_t) - v^n(t,\xr, \xr_t) \big|
			~\le~
			C_{K}  \varpi_{\circ}' \big( |t'-t|^{\frac{1}2} + |\pi^n|^{\frac14}\big). 
		\end{equation}
	\end{Lemma}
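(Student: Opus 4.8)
\proof The idea is to deduce the time‑increment bound from the space‑regularity estimate of Lemma~\ref{lem:Unif_conti_vni} by means of the flow (dynamic programming) property of $v^{n}$, in the same spirit as the controlled‑diffusion arguments used in the proofs of Proposition~\ref{prop: comparison n-visco} and Lemma~\ref{lem:Unif_conti_vni}. Fix $K>0$, $\xr\in\Om$ with $\|\xr\|\le K$ and $0\le t\le t'\le T$, and set $\bar x:=\xr_{t}$. Since $v^{n}$ is non‑anticipative, $v^{n}(t,\xr,\xr_{t})=v^{n}(t,\xr_{t\wedge},\bar x)$, so it suffices to estimate $v^{n}(t,\xr_{t\wedge},\bar x)-v^{n}(t',\xr_{t\wedge},\bar x)$. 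By Proposition~\ref{prop: comparison n-visco}$\mathrm{(iii)}$ this quantity is bounded by a constant depending only on $K$, so (after enlarging $C_{K}$) the claim is trivial whenever $t'-t$ is bounded away from $0$, and we may focus on small $t'-t$.

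On each grid interval $[t^{n}_{k},t^{n}_{k+1})$ the frozen generator $F^{n}_{k}$ is monotone and Lipschitz in $(y,z,\gamma)$ with the structure of Assumption~\ref{ass:FH}, and by Assumption~\ref{ass: F}$\mathrm{(iv)}$ it is squeezed between two HJB operators with bounded, Lipschitz coefficients. As in the proofs of Proposition~\ref{prop: comparison n-visco} and Lemma~\ref{lem:Unif_conti_vni}, this yields a stochastic representation of $v^{n}$ on $[t,T]$: there is a controlled, piecewise‑frozen diffusion $X=X^{t,\xr_{t\wedge},\bar x,\beta}$ (with $\beta$ ranging over controls valued in a compact set), started from $\bar x$ at time $t$ with past $\xr_{t\wedge}$, whose coefficients are bounded by $C_{K}$ as long as the path stays in a ball (which holds in $L^{2}$, the coefficients having linear growth and $|\bar x|\le K$), such that the flow identity
$$
	v^{n}(t,\xr_{t\wedge},\bar x)=\Ec^{n}_{t,t'}\Big[\,v^{n}\big(t',X_{t'\wedge},X_{t'}\big)\,\Big]
$$
holds, where $\Ec^{n}_{t,t'}$ is the nonlinear expectation over $[t,t']$ attached to $F$. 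The point of passing through $X$ (rather than through a ``piecewise PDE'' in $(s,x)$) is that the path‑freezing in the coefficients then occurs along the genuine trajectory of $X$, so that no spurious jump appears at the grid points; a naive comparison would instead accumulate an error of order $\|x-\bar x\|$ at each of the $\simeq(t'-t)/|\pi^{n}|$ grid points in $(t,t')$, and this does not sum up. Since $\Ec^{n}_{t,t'}$ is monotone, $L$‑Lipschitz and dominated by the linear nonlinear expectation $\overline\Ec^{n}_{t,t'}$ built from the bounded coefficients of Proposition~\ref{prop: comparison n-visco}, and $\overline\Ec^{n}_{t,t'}[c]-c=O(t'-t)$ for constants $c$, one gets
$$
	\big|v^{n}(t,\xr_{t\wedge},\bar x)-v^{n}(t',\xr_{t\wedge},\bar x)\big|
	\;\le\;C_{K}\sup_{\beta}\E\Big[\,\big|v^{n}(t',X_{t'\wedge},X_{t'})-v^{n}(t',\xr_{t\wedge},\bar x)\big|\,\Big]+C_{K}(t'-t).
$$

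It remains to estimate the right‑hand side. Applying Lemma~\ref{lem:Unif_conti_vni} with $m=n$ pathwise (recall $\bar x=(\xr_{t\wedge})_{t'}$), one has $\big|v^{n}(t',X_{t'\wedge},X_{t'})-v^{n}(t',\xr_{t\wedge},\bar x)\big|\le C_{K}\varpi_{\circ}'\big(\rho_{\eta^{+}_{n}(t')}(\Pi^{n}[X_{\eta^{+}_{n}(t')\wedge}],\Pi^{n}[\xr_{t\wedge}])+|\pi^{n}|^{1/4}\big)$ on the event $\{\|X_{t'\wedge}\|\le K'\}$. Since $X$ coincides with $\xr_{t\wedge}$ on $[0,t]$ and has coefficients bounded by $C_{K}$ on $[t,\eta^{+}_{n}(t')]$, standard moment estimates give $\E[\sup_{s\le t'}|X_{s}-(\xr_{t\wedge})_{s}|^{2}]\le C_{K}(t'-t)$, while the $\Pi^{n}$‑projections add an $O(|\pi^{n}|)$ error as in \eqref{eq:X_PiX_diff}; hence $\E[\rho_{\eta^{+}_{n}(t')}(\Pi^{n}[X_{\eta^{+}_{n}(t')\wedge}],\Pi^{n}[\xr_{t\wedge}])^{2}]\le C_{K}\big((t'-t)+|\pi^{n}|^{1/2}\big)$, uniformly in $\beta$. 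Using the concavity of $\varpi_{\circ}$ together with Jensen's inequality (so that $\E[\varpi_{\circ}'(\xi)]\le\sqrt{\varpi_{\circ}(\E[\xi^{2}])}$), the elementary bounds $\varpi_{\circ}'(a+b)\le\sqrt2\big(\varpi_{\circ}'(a)+\varpi_{\circ}'(b)\big)$ and $\varpi_{\circ}'(ca)\le\sqrt{c}\,\varpi_{\circ}'(a)$ for $c\ge1$, and the inequality $x\le c^{-1/2}\varpi_{\circ}'(x^{1/2})$ (which follows from $\varpi_{\circ}(x)\ge cx$, and lets one absorb the term $C_{K}(t'-t)$), one converts these $L^{2}$‑bounds into the announced estimate \eqref{eq:vn_holder_t}. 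The main obstacle is the first step, namely establishing the flow identity and the domination of $\Ec^{n}_{t,t'}$ by $\overline\Ec^{n}_{t,t'}$ across several grid intervals in the fully nonlinear case; this is where one genuinely needs the second‑order BSDE / controlled‑diffusion machinery (as in \cite{soner2012wellposedness}, already invoked above) rather than pure viscosity arguments.
\endproof
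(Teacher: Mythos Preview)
Your overall strategy---pass from $t$ to $t'$ by a stochastic representation with terminal condition $v^{n}(t',\cdot)$, then feed the resulting diffusion increment into the space estimate of Lemma~\ref{lem:Unif_conti_vni} and conclude via the concavity of $\varpi_{\circ}$---is exactly the paper's. The remaining steps (moment bounds on $X-\xr_{t\wedge}$, Jensen with $\varpi_{\circ}'$, absorbing the additive $(t'-t)$ term using $\varpi_{\circ}(x)\ge cx$) match as well.

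Where you diverge is in the first step. You aim at an \emph{exact} flow identity $v^{n}(t,\cdot)=\Ec^{n}_{t,t'}[v^{n}(t',X)]$ for the fully nonlinear operator $F^{n}$, and you rightly flag that justifying this across several grid intervals is the hard part, essentially requiring a 2BSDE dynamic programming result. The paper sidesteps this entirely: it never seeks an equality. Instead it uses only the one-sided squeeze of Assumption~\ref{ass: F}$\mathrm{(iv)}$. Since $F\ge \underline F+\inf_{a}\tfrac12\Tr[\underline\sigma\,\underline\sigma^{\top}\gamma]$, the function $v^{n}$ is a viscosity supersolution of the semilinear HJB equation driven by $(\underline F,\underline\sigma)$, hence by ordinary comparison one has $v^{n}(t,\xr)\ge \inf_{\mathfrak a\in\Ac} Y^{\theta,\mathfrak a}_{t}$, where $(X^{\theta,\mathfrak a},Y^{\theta,\mathfrak a},Z^{\theta,\mathfrak a})$ solves a \emph{standard} controlled FBSDE on $[t,t']$ with terminal datum $v^{n}(t',X^{\theta,\mathfrak a})$. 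A Girsanov linearisation of this BSDE then yields the lower bound on $v^{n}(t,\xr)-v^{n}(t',\xr_{t\wedge})$ directly, and the upper bound follows symmetrically from the second inequality in Assumption~\ref{ass: F}$\mathrm{(iv)}$(b). This reduces the ``heavy machinery'' you anticipate to nothing more than classical BSDE estimates and the comparison principle already used in Proposition~\ref{prop: comparison n-visco}; no 2BSDE flow property is needed.

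A minor point: when you invoke Lemma~\ref{lem:Unif_conti_vni} inside the expectation, the bound there is only valid on $\{\|X_{t'\wedge}\|\le K'\}$; you should say how the complement is handled. The paper does this by combining Lemma~\ref{lem:Unif_conti_vni} with the uniform linear growth \eqref{eq: linear growth unif n approximate sol}, so that the contribution of large paths is controlled by their second moment.
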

	\proof  	
	We use the notations introduced in the proof of Proposition \ref{prop: comparison n-visco}.    Fix $t'\le T$. Given $\theta:=(t,\xr, x) \in [0,t']\x \Omega \x \R^d$,  and $\mathfrak{a}\in \Ac$, let $(X^{\theta,\mathfrak{a}},Y^{\theta,\mathfrak{a}},Z^{\theta,\mathfrak{a}})$  be  the unique triplet of $\R^{d}\x \R\x \R^{d}$-adapted processes satisfying 
	$$
		\E \Big[ \|X^{\theta,\mathfrak{a}}\|^{2}+\|Y^{\theta,\mathfrak{a}}\|^{2}+\int_{t}^{t'}|Z^{\theta,\mathfrak{a}}_{s}|^{2}ds \Big]
		<
		\infty 
	$$
	and that solves the (decoupled) forward-backward SDE
	\begin{align*}
		X^{\theta,\mathfrak{a}}_{s}& 
		= 
		\1_{\{s \in [0,t)\}} \xr_s +
		\1_{\{s \in [t,T]\}} 
		\Big(
			x + \int_{t} ^{s} \underline\sigma \big(r,\Pi^{n}_{\eta_n(r)} [X^{\theta,\mathfrak{a}}], \mathfrak{a}_{r} \big) dW_{r}
		\Big), \\
		Y^{\theta,\mathfrak{a}}_{s}
		&=
		v^{n}(t',X^{\theta,\mathfrak{a}}\big) 
		+
		\int_{s}^{t'} \underline F \big( r,\Pi^{n}_{\eta_n(r)}[X^{\theta,\mathfrak{a}}] ,Y^{\theta,\mathfrak{a}}_{r},Z^{\theta,\mathfrak{a}}_{r} \big)dr
		-
		\int_{s}^{t'}Z^{\theta,\mathfrak{a}}_{r} dW_{r}, 
	\end{align*}
	for $t\le s\le t'$. Then,  Item (iv) of Assumption \ref{ass: F} implies that 
	(see the proof of Proposition \ref{prop: comparison n-visco})
	\begin{align}\label{eq: vn ge infa Ya} 
		v^{n}(t,\xr)
		~\ge~
		\inf_{\mathfrak{a}\in \Ac} Y^{\theta,\mathfrak{a}}_t.
	\end{align} 
 	By the Lipschitz property of $\underline F$, one can find a  predictable process $(k^{\mathfrak{a}},\zeta^{\mathfrak{a}})$ with values in $[-L,L]^{2}$ such that 
	$$
		\underline F \big( r,\Pi^{n}_{\eta_n(r)} [X^{\theta,\mathfrak{a}}] ,Y^{\theta,\mathfrak{a}}_{r},Z^{\theta,\mathfrak{a}}_{r} \big)
		~=~
		\underline F \big( r,\Pi^{n}_{\eta_n(r)}[X^{\theta,\mathfrak{a}}] ,0,0 \big)
		+
		k^{\mathfrak{a}}_{r} Y^{\theta,\mathfrak{a}}_{r} 
		+
		\zeta^{\mathfrak{a}}_{r} Z^{\theta,\mathfrak{a}}_{r},
		~~ t\le r\le t'.
	$$
	Let $\Q^{\mathfrak{a}}\sim \P$ be such that $W^{\mathfrak{a}}=W{-}\int_{0}^{\cdot}\zeta^{\mathfrak{a}}_{r}dr$ is a $\Q^{\mathfrak{a}}$-Brownian motion. Then, 
	\begin{align*}
		Y^{\theta,\mathfrak{a}}_{t} 
		&=
		\E^{\Q^{\mathfrak{a}}} \Big[
			e^{\int_{t}^{t'}k^{\mathfrak{a}}_{s}ds}v^{n} \big( t',X^{\theta,\mathfrak{a}}\big) 
			+
			\int_{t}^{t'}e^{\int_{t}^{r}k^{\mathfrak{a}}_{s}ds}\underline F\big( r,\Pi^{n}_{\eta_n(r)} [X^{\theta,\mathfrak{a}}] ,0,0 \big)dr 
		\Big].
	\end{align*}
 We can then find $C'_{K}>0$, that depends only on $L$ and $K$, such that, for all $\|\xr\|\le K$,  
	one has $\E^{\Q^{{\mathfrak a}}}[\|X^{\theta,\mathfrak{a}}_{(t' \vee t)\wedge}-\xr_{t}\|^{2}]^{\frac12}\le C'_{K} (t'-t)^{\frac12} $
	and 
	\begin{align*}
		Y^{\theta,\mathfrak{a}}_{t}-v^{n}(t',\xr_{ t\wedge})
		\ge \E^{\Q^{\mathfrak{a}}}\left[e^{\int_{t}^{t'}k^{\mathfrak{a}}_{s}ds}v^{n}(t',X^{\theta,\mathfrak{a}}\big)-v^{n}(t',\xr_{ t\wedge})-(t'-t)C'_{{K}}(1+\|\xr\|)\right].
	\end{align*}	
	Recall that the function $\varpi_{\circ}$ (used in the definition of $\varpi'_{\circ}$ in \eqref{eq:def_varpi_p}) is concave.
	One can then apply Lemma \ref{lem:Unif_conti_vni} and  \eqref{eq: linear growth unif n approximate sol} to deduce that there exists a constant $C''_{K}> 0$, 
	that depends only on $L$ and $K$, such that, for all $\|\xr \| \le K$,
 	$$
 		Y^{\theta,\mathfrak{a}}_{t}-v^{n}(t',\xr_{ t\wedge})
		~\ge~
		-C''_{K} \Big( 
			\big| t'-t \big| + e^{C''_{K}|t'-t|} - 1 
			+ 
			\varpi_{\circ}' \big( |t'-t|^{\frac{1}2} + |\pi^n|^{\frac14}\big)
		 \Big).
 	$$
	Recall that, for some constant $c> 0$, one has $\varpi_{\circ}(x) \ge cx$ for all $x \ge 0$.
	Then by \eqref{eq: vn ge infa Ya}, there exists a constant $C_K > 0$ such that
	$$
		v^{n}(t,\xr) - v^{n}(t',\xr_{ t\wedge})
		~\ge~
		- C_{K} \varpi_{\circ}' \big( |t'-t|^{\frac{1}2} + |\pi^n|^{\frac14}\big),
		~~\mbox{for all}~
		\|x \| \le K.
	$$
	A similar upper-bound is obtained by using the second inequality in  Item (iv)-(b) of Assumption \ref{ass: F}.
	\endproof

	\begin{Remark} \label{rem:vn_Lip_Hold}
	When $\varpi_{\circ} (x) := x$ for all $x \ge 0$, in view of Remark \ref{rem:vn_Lipschitz}, one can improve the above estimation in \eqref{eq:vn_holder_t} to be
	$$
		\big| v^n(t',\xr_{t \wedge }, \xr_t) - v^n(t,\xr, \xr_t) \big|
		~\le~
		C_{K}  |t'-t|^{\frac{1}2} . 
	$$
	In particular, in this case, $v^n$ is locally Lipschitz in $\Pi^n_t[\xr]$ and $1/2$-H\"older in $t$.
	\end{Remark}

We can now conclude the proof of   Theorem \ref{thm: existence + Lipschitz cas differentiable}.

	\proof[Proof of Theorem \ref{thm: existence + Lipschitz cas differentiable}]
	$\mathrm{(i)}$ Recall that $(\pi^{n})_{n \ge 1}$ is a sequence of discrete time grid on $[0,T]$, let  $v^n$ be a $\pi^{n}$-viscosity solution of \eqref{eq:PPDE} satisfying the terminal condition \eqref{eq: cond teminale vn}.
	Moreover, for each $(t, \xr) \in [0,T] \x \Om$, it follows by Lemma \ref{lem:Unif_conti_vni} that there exists a constant $C> 0$ such that, for all $n \ge m \ge 1$,
	\begin{equation} \label{eq:vmvn_diff}
		\big| v^m(t,\xr, \xr_t)- v^n(t,\xr, \xr_t) \big| 
		\le
		C \varpi_{\circ}' \Big( \rho_{\eta^+_m(t) } \big(\Pi^n [ \xr \bp_{\eta^+_n(t) } \xr_t ], \Pi^m [ \xr \bp_{\eta^+_m(t) } \xr_t] \big) 
			\!+\! 
			|\pi^m|^{\frac14}  
			\!+\!
			|\pi^n|^{\frac14} \Big).
	\end{equation}
	Therefore, $(v^n(t, \xr, \xr_t))_{n \ge 1}$ is a Cauchy sequence so that $v^n(t, \xr, \xr_t) \longrightarrow \vr(t, \xr)$ pointwise for some nonanticipative functional $\vr: [0,T] \x \Om \longrightarrow \R$.
	By Definition \ref{def: approximate solution}, $\vr$ is a $\pi$-approximate viscosity solution of \eqref{eq:PPDE}-\eqref{eq: cond bord}.
	Further, it is in fact the unique $\pi$-approximate viscosity solution, by the comparison principle in Proposition \ref{prop: comparison}.

	\vspace{0.5em}
	
	\noindent $\mathrm{(ii)}$
	By the estimation in \eqref{eq: regu en espace de vn} with $m=n$, together with the estimation in \eqref{eq:vn_holder_t}, 
	the limit function $\vr$ satisfies \eqref{eq: approximate sol is lipschitz cas diff}.
	
	\vspace{0.5em}

	\noindent $\mathrm{(iii)}$ We finally prove that the notion of approximate viscosity solution does not depend on the special choice of the increasing sequence of time grids. 
	Let us fix two increasing sequences of time grids $\pi=(\pi^{n})_{n\ge 1}$ and $\pi'=(\pi'^{n})_{n\ge 1}$, 
	and let $v^{n}$ be the $\pi^{n}$-viscosity solution, $v'^{n}$ be the $\pi'^{n}$-viscosity solution.
	We then define $\tilde \pi = (\tilde \pi^{n})_{ n\ge 1}$ by $\tilde \pi^k := \pi^k$ for $k \le n$, and $\tilde \pi^{k} := \pi^n \cup \pi'^{k-n}$ for $k > n$,
	and let $\tilde v^{k}$ be the corresponding $\tilde \pi^k$-viscosity solution.
	In particular, one has $\tilde \pi^{2n} := \pi^n \cup \pi'^{n}$,   $\tilde v^{n} = v^n$, and  that $\tilde v^{2n}$ is the $\pi^n \cup \pi'^{n}$-viscosity solution.
	Then, it follows from \eqref{eq:vmvn_diff} that, for some $C>0$, independent on $n$,
	\begin{align} \label{eq:diff_vn_v2n}
		\big| v^{n} - \tilde v^{2n} \big|(t,\xr,\xr_{t}) 
		&=
		\big| \tilde v^{n} - \tilde v^{2n} \big|(t,\xr,\xr_{t}) \nonumber \\
		&\le~
		C \varpi_{\circ}' \Big( \rho_{\eta^+_n(t) } \big(\Pi^n [ \xr \bp_{\eta^+_n(t) } \xr_t ], \widetilde \Pi^{2n} [ \xr \bp_{\tilde \eta^+_{2n}(t) } \xr_t] \big) 
			\!+\! 
			|\pi^n|^{\frac14}  
			\!+\!
			|\tilde \pi^{2n}|^{\frac14} \Big),
	\end{align}
	where $ \widetilde \Pi^{2n} $ and $\tilde \eta^+_{2n}$ are defined as $ \Pi^{2n}$ and $\eta^+_{2n}$ but with respect to $ \widetilde \pi^{2n}$.
	By the same arguments, one has
	$$
		\big| v'^{n} - \tilde v^{2n} \big|(t,\xr,\xr_{t}) 
		~\le~
		C \varpi_{\circ}' \Big( \rho_{\eta'^+_n(t) } \big(\Pi'^n [ \xr \bp_{\eta'^+_n(t) } \xr_t ], \widetilde \Pi^{2n} [ \xr \bp_{\tilde \eta^+_{2n}(t) } \xr_t] \big) 
			\!+\! 
			|\pi'^n|^{\frac14}  
			\!+\!
			|\tilde \pi^{2n}|^{\frac14} \Big),
	$$
	where $\Pi'^{n}$ and $\eta'^+_n$ are defined as $\Pi^{n}$ and $\eta^+_n$ but with respect to $\pi'^{n}$.
	Therefore, the sequences $(v^{n})_{n\ge 1}$ and  $(v'^{n})_{n\ge 1}$ have the same limit as $n \longrightarrow \infty$. 
	\endproof

\subsection{A classical solution is an approximate viscosity solution}
\label{subsec:Comparisons}
 	
	 We prove in this section that our notion of approximate solution is consistent with the notion of smooth solution (in the sense of Dupire). For this, we will have to restrict to the case where either $d\le 2$ or $\gamma\in \S^{d} \mapsto F(\cdot, \gamma)$ is  concave (or convex, by a change of variables). Note that Proposition \ref{prop: consistence solution smooth}  below combined with Remark \ref{rem: vitesse conv vn to v} actually provides a numerical algorithm for computing the smooth solution of a PPDE, if it exists. 	\vs2 
	
	Let us set  for this subsection 
	$$
		\Om = \DT
		~~\mbox{so that}~
		\Theta = [0,T] \x \DT,
	$$
	and recall the definition of derivatives of path-dependent functionals in the sense of Dupire \cite{dupireito}.
	Let $u: \Theta \longrightarrow \R$ be  a non-anticipative function,
	 and let us write 
	$$
		u(t, \xr, x) ~:=~ u(t, \xr \bp_t x) .
	$$
	The function $u: \Theta \longrightarrow \R$ is said to be horizontally differentiable if,
	for all $(t,\xr)\in[0, T) \x \DT$, the horizontal derivative
	$$
		\partial_{t}u(t,\xr) ~:=~ \lim_{h\searrow 0} \frac{u(t+h,\xr_{t \wedge }) - u(t,\xr_{t \wedge })}{h}\;\;
		~\mbox{is well-defined}.
	$$
	Next, $u$ is said to be vertically differentiable if, for all $(t,\xr)\in \Theta$, the function
	\begin{equation} \label{eq:u_xr_u_xr_y}
		y\in \R^{d} \longmapsto u(t, \xr, y) = u(t, \xr \bp_{t} y)
		~\mbox{is differentiable at}~\xr_t,
	\end{equation}
	whose derivative at $y= \xr_t$ is called the vertical derivative of $u$ at $(t, \xr)$, denoted by $\nabla_{\xr}u(t,\xr)$.
	Similarly, one can define the second-order vertical derivative $\nabla^{2}_{\xr}u (t, \xr)$.

	\vspace{0.5em}

	We say that $u$ is locally bounded if $\sup_{t \in [0,T], \|\xr\| \le K} |u(t, \xr)| < \infty$ for all $K> 0$.
	We denote by $\Cb_{b}^{\rm loc}(\Theta)$  the class of all non-anticipative, continuous, and locally bounded functions on $\Theta$,
	and {by 
	$\Cb^{1,2}(\Theta)$ the collection of elements $u \in \Cb_b^{\rm loc}(\Theta) $ such that $\partial_t u,$ $\nabla_{\xr}u$ and $\nabla^2_{\xr} u$ are all well defined, continuous   and locally bounded on $[0,T)\x D([0,T])$.}

	 We then say that $u\in \Cb^{1,2}(\Theta)$ is a classical solution of  \eqref{eq:PPDE}-\eqref{eq: cond bord} if 
	\begin{align*}
		- \partial_t u - F(\cdot, u, \nabla_{\xr} u, \nabla^2_{\xr} u) &= 0  \;\mbox{on $[0,T)\x D([0,T])$}\;\mbox{ and }\;
		u(T,\cdot)=g   \;\mbox{on $D([0,T])$.}
	 \end{align*}
	

	\begin{Proposition}\label{prop: consistence solution smooth} 
		Let Assumptions \ref{ass: F} and \ref{ass:FH} hold true,
		and ${\rm w}$ be a classical solution to the PPDE \eqref{eq:PPDE}-\eqref{eq: cond bord} such that 
		$$
			|{\rm w}(t,\xr)|\le C(1+\|\xr\|^{p}),\mbox{ for all }(t,\xr) \in \Theta,
		$$
		for some $C>0$ and $p\in \N$.  
		Assume in addition that either $d\le 2$ or $\gamma\in \S^{d} \mapsto F(\cdot, \gamma)$ is  concave.
		Then ${\rm w}$ is the approximate viscosity solution of \eqref{eq:PPDE}-\eqref{eq: cond bord}.
	\end{Proposition}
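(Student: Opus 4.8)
The plan is to prove the equivalent statement that the $\pi^n$-viscosity solutions $v^n$ of \eqref{eq:PDE_vni}--\eqref{eq: cond teminale vn} converge pointwise to $\mathrm{w}$. By Theorem \ref{thm: existence + Lipschitz cas differentiable} they already converge to the unique $\pi$-approximate viscosity solution $\vr$, so this gives $\vr=\mathrm{w}$ and, a posteriori, independence of the grid sequence. Fixing $K>0$, it is enough to bound $\delta_n:=\sup\{|v^n(t,\xr,\xr_t)-\mathrm{w}(t,\xr)|:t\in[0,T],\ \|\xr_{t\wedge}\|\le K\}$ by a quantity tending to $0$. The ingredients I would use are: the uniform growth of $v^n$ and its local Lipschitz continuity in the space variable (Proposition \ref{prop: comparison n-visco}(iii), Remark \ref{rem:vn_Lipschitz}); the continuity and local boundedness of $\partial_t\mathrm{w},\nabla_{\xr}\mathrm{w},\nabla^2_{\xr}\mathrm{w}$ together with the PPDE satisfied by $\mathrm{w}$; and the convergence $\rho_{t}(\Pi^n_{\eta_n(t)}[\xr],\xr_{t\wedge})\le\omega_{\xr}(|\pi^n|)\to 0$ from Remark \ref{remark:separable_Om}(ii), valid uniformly because the atoms of $\mu$ lie on the grids.

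The heart of the argument is a backward, cell-by-cell comparison. On a cell $[t^n_i,t^n_{i+1})$, $v^n_i(t,[\xr]^n_i,\cdot)$ solves the finite-dimensional equation \eqref{eq:PDE_vni}, in which the path is frozen and replaced by the piecewise-constant $\Pi^n_{t^n_i}[\xr]$, so that $F^n_i$ does not depend on the current space variable. This is where the structural hypothesis enters: when $\gamma\mapsto F(\cdot,\gamma)$ is concave, writing $F=H+r\,y+\mu\cdot z+\tfrac12\Tr[\sigma\sigma^\top\gamma]$ as in Assumption \ref{ass:FH} and using that a Lipschitz, nondecreasing, concave nonlinearity in $\gamma$ is an infimum of affine ones, $v^n_i$ is the value function of a standard stochastic control problem with coefficients frozen at $\Pi^n_{t^n_i}[\xr]$; when $d\le 2$ one uses instead the interior $C^{2,\alpha}$-type regularity available in that dimension (uniform on bounded sets), which makes \eqref{eq:PDE_vni} a \emph{classical} equation amenable to the same treatment. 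In both cases one gets, for each cell, a one-step dynamic-programming relation for $v^n_i(t^n_i,[\xr]^n_i,\xr_{t^n_i})$ expressing it through $\E$ of the value at $t^n_{i+1}$ along the frozen-coefficient diffusion $X$ of the cell, plus a frozen running cost. Applying the functional It\^o formula of \cite{dupireito} to $\mathrm{w}\in\Cb^{1,2}(\Theta)$ along the same diffusion, concatenated after $\Pi^n_{t^n_i}[\xr]$, and invoking the PPDE, one obtains the parallel relation for $\mathrm{w}(t^n_i,\xr)$ with the true coefficients; the two running costs then differ by $\varepsilon^n_i:=C_K(t^n_{i+1}-t^n_i)\,\varpi_{\circ}'\!\big(\omega_{\xr}(|\pi^n|)+|\pi^n|^{1/2}\big)$ by the Lipschitz bounds \eqref{eq: hyp lip rho mu sig g}--\eqref{eq: hyp borne rho} and the fact that on a cell the diffusion moves only $O(|\pi^n|^{1/2})$ in $\rho$.

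Combining the two one-step relations with the induction hypothesis at $t^n_{i+1}$ — and absorbing the difference between $\mathrm{w}$ evaluated along the continuous bridge and along its grid-value step path, which is a \emph{weak}-type error of size $o(|\pi^n|)$ per cell thanks to the matching of the first two moments of the diffusion increment and the uniform continuity of $\nabla^2_{\xr}\mathrm{w}$ — yields a backward recursion $\delta_i\le(1+C_K|\pi^n|)\,\delta_{i+1}+\varepsilon^n_i$ for the cell-wise errors, with $\delta_n=0$ by $\mathrm{w}(T,\cdot)=g$. Since $\sum_{i<n}\varepsilon^n_i\le C_K T\,\varpi_{\circ}'\!\big(\omega_{\xr}(|\pi^n|)+|\pi^n|^{1/2}\big)\to0$, a discrete Gronwall inequality gives $\delta_n\le e^{C_KT}\sum_{i<n}\varepsilon^n_i\to0$, which is the claim; for semilinear $F$ this reduces to the purely probabilistic estimate behind Proposition \ref{prop : stoch representation}, and the rate is the one in Remark \ref{rem: vitesse conv vn to v}.

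The main obstacle is the one-step consistency estimate, i.e. seeing that the per-cell error is $o(|\pi^n|)$ and not merely $O(|\pi^n|^{1/2})$: a naive pathwise bound would accumulate to a divergent quantity over the $\simeq 1/|\pi^n|$ cells, so one must exploit the cancellation of the martingale increments, which in turn forces one to control the Hessian-dependent part of the error — exactly what concavity of $F$ in $\gamma$ (via the concavity inequality, which telescopes against the It\^o correction of the optimal control) or $d\le 2$ (via the low-dimensional regularity of $v^n_i$) makes possible. A secondary point is that $\xr\in\DT$ may have jumps: only finitely many cells carry a non-negligible part of its variation and must be isolated, so that the per-cell estimates — which rely on $|\xr_{t^n_{i+1}}-\xr_{t^n_i}|\to0$ — still sum to something vanishing. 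The gluing of the cell solutions at the points $t^n_{i+1}$ and the passage to the unbounded space domain on each cell (through the penalty functions of Assumption \ref{ass: F}(i), taken with polynomial weight to match the growth of $\mathrm{w}$) are routine.
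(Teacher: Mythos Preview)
Your route is genuinely different from the paper's. The paper does not run a cell-by-cell backward recursion with a discrete Gronwall inequality. Instead it (i) uses the hypothesis $d\le 2$ or $\gamma\mapsto F(\cdot,\gamma)$ concave purely as a \emph{regularity} device: after smoothing $F^n$ and adding a small Laplacian, the Evans--Krylov/low-dimensional interior estimates yield a classical solution, so one may assume w.l.o.g.\ that $v^n$ itself is smooth; (ii) with both $\mathrm{w}$ and $v^n$ smooth, it sets $\phi^n(t,\xr,\xr'):=\mathrm{w}(t,\xr)-v^n(t,\xr')$ on the doubled space, linearizes $F-F^n$ via the Lipschitz structure of Assumption \ref{ass:FH}, and derives a \emph{linear} PPDE for $\phi^n$ with source term controlled by $\rho_t(\xr,\Pi^n_{\eta_n(t)}[\xr'])$; (iii) a single functional-It\^o/Feynman--Kac representation on $[t,T]$ along a coupled $(\xr,\xr')$-diffusion (exactly as in the proof of Lemma \ref{lem:Unif_conti_vni}) gives $\phi^n(t,\xr,\xr)\to 0$. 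No per-cell summation is involved, and concavity plays no role beyond Step~(i).

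Your concave-case plan (control representation of $v^n_i$, It\^o for $\mathrm w$, compare) is sound but over-engineered: it collapses to the 2BSDE stability argument of Proposition \ref{prop : stoch representation} on the whole interval, with no need for cells. In the $d\le 2$ non-concave case there is no control representation, and your ``amenable to the same treatment'' via regularity of $v^n_i$ forces you onto the paper's linearization anyway. More importantly, the difficulty you flag as the ``main obstacle''---a per-cell $O(|\pi^n|^{1/2})$ error requiring moment matching and a ``concavity inequality that telescopes against the It\^o correction''---is self-inflicted: it appears only because you compare $\mathrm w$ at the step path $\Pi^n_{t^n_{i+1}}[\hat X]$ rather than at $\hat X$ itself. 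Since $\delta_{i+1}$ is a supremum over \emph{all} $\xr\in\DT$ and $v^n(t^n_{i+1},\hat X,\hat X_{t^n_{i+1}})=v^n(t^n_{i+1},\Pi^n_{t^n_{i+1}}[\hat X],\hat X_{t^n_{i+1}})$ automatically, the terminal comparison costs exactly $\delta_{i+1}$ with no further error; the only genuine per-cell term is the running-cost mismatch $\varepsilon^n_i$, which already sums to something vanishing. So the role you assign to concavity/$d\le 2$ (curing a Hessian-level cancellation) is not where these hypotheses actually enter.
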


	 \begin{Remark} Note that the condition $d\le 2$ or $\gamma\in \S^{d} \mapsto F(\cdot, \gamma)$ is  concave is typically required for the existence of smooth solutions in the finite dimensional setting, see e.g.~\cite{lieberman1996second}. We shall actually use it in the proof below. Any other assumption ensuring the existence of smooth solutions in the finite dimensional setting (up to regularizing the coefficients) could be used in place of this one.  
	\end{Remark}
	
	\begin{proof}[Proof of Proposition \ref{prop: consistence solution smooth}] Let $v^n$ be the $\pi^n$-viscosity solution of \eqref{eq:PPDE}, 
	or, equivalently,  a viscosity solution of \eqref{eq: PDE sur ti ti+1}.
	It is enough to prove that $v^n \longrightarrow {\rm w}$ pointwisely as $n\to \infty$. 
	
	\vspace{0.5em}
	
	\noindent $\mathrm{(i)}$ In a first step, we notice that $F^n$ can be considered as a function of a $(\xr_{t^n_0}, \cdots, \xr_{t^n_n}, y, z, \gamma)$,
	and that one can regularize it  (and possibly add a small Laplacian term   to guarantee uniform ellipticity)  so that there is a sequence $(F^{n,k})_{k \ge 1}$ satisfying  
	$$
		F^{n,k}(\cdot, y, z, \gamma) = H^{n,k}( \cdot, y, z, \gamma) + r^{n,k} (\cdot) y + \mu^{n,k} (\cdot) \cdot z + \frac12 \Tr \big[ \sigma^{n,k} \sigma^{n,k \top} (\cdot) \gamma\big] , 
	$$
	for some smooth functions $(H^{n,k}, r^{n,k}, \mu^{n,k}, \sigma^{n,k})$ which converge to $(H^n, r^n, \mu^n, \sigma^n)$ uniformly.
	Moreover, when $d \le 2$ or $F$ is  concave in its last argument,
	the corresponding PDE \eqref{eq: PDE sur ti ti+1}, with generator $F^{n,k}$,  has a classical solution $v^{n,k} \in C^{1,3}_{b}$    which converges locally uniformly to $v^n$,  see e.g.~\cite[Theorem 14.15]{lieberman1996second} and \cite[Appendix D]{pham2014two}.
	In view of this, we can assume w.l.o.g. that $v^n$ is a smooth solution of  \eqref{eq: PDE sur ti ti+1} with generator $F^n$.

	\vspace{0.5em}
	
	\noindent $\mathrm{(ii)}$ 		
	Next, we will consider $v^n$ as a path-dependent functional  and  essentially argue as in  the proof of  Lemma \ref{lem:Unif_conti_vni} with the additional property (recalling the convention \eqref{eq:u_xr_u_xr_y}) that 
	$$
		\phi^n(t, \xr, \xr', \xr_t, \xr'_t) = \phi^n(t, \xr,\xr') := {\rm w}(t, \xr) - v^n(t, \xr')
	$$
	is a smooth functional.  We therefore only sketch the proof. 
	For later use, note that 
	$$
		 \nabla_{\xr} \phi^n(t, \xr, \xr') = \nabla_{\xr} {\rm w}(t, \xr  )  
	\;\mbox{ , }\;
		\nabla_{\xr'} \phi^n(t, \xr, \xr') = - \nabla_{\xr'} v^{n}(t,  \xr')\;\mbox{ and }\; \nabla_{\xr'} \nabla_{\xr}\phi^n(t, \xr, \xr')=0.
	$$
	 For ease of notations, we pursue the proof with $d=1$. We first compute that
	\begin{align*}
		&
		-H \big(t, \xr , ({\rm w}, \nabla_{\xr} {\rm w}, \nabla^2_{\xr} {\rm w})(t,\xr   \big) 
		+ 
		H^n \big(t, \xr', (v^n, \nabla_{\xr} v^n, \nabla^2_{\xr} v^n)(t, \xr') \big)  \\
		& =
		- f(t,\xr,\Pi^{n}_{\eta_n(t)} (\xr')) - \kappa^{n}(t,\xr,\xr')  \phi^n(t, \xr,\xr')- b^{n}(t,\xr,\xr')(\nabla_{\xr} \phi^n(t, \xr,\xr')+\nabla_{\xr'} \phi^n(t, \xr,\xr'))\\
		&\;\;- a^{n}(t,\xr,\xr')\big(\nabla^{2}_{\xr} \phi^n(t, \xr,\xr')+\nabla^{2}_{\xr'} \phi^n(t, \xr,\xr')\big)
		\end{align*}
	for some  continuous map $a^{n}$ that is non-negative and bounded by $L$,  a continuous map $(\kappa^{n},b^{n})$ bounded by $2L$, and where 
	$f$ is continuous and satisfies 
	$$
		\big| f(t,\xr,\Pi^{n}_{\eta_n(t)}(\xr'))\big|
		~\le~
		L\rho_{t}(\xr,\Pi^{n}_{\eta_n(t)}(\xr')),
	$$ 
	recall Assumption \ref{ass:FH}. Note that $a^n,b^n,\kappa^n$ depend on $(t,\xr)$ through ${\rm w}$ and $v^n$ possibly.  Let us set 
	$$
		r^{n}(t,\xr) 
		~:=~
		r \big(t, \Pi^n_{\eta_n(t)}[\xr] \big),
	$$
	and define $\mu^{n}$ and $\sigma^{n}$ similarly with respect to $\mu$ and $\sigma$. 
	 	Next, we compute that	
	\begin{align*}
		&-\Big( r(t, \xr   ) {\rm w}(t, \xr )  + \mu(t, \xr  )  \nabla_{\xr} {\rm w}(t, \xr ) + \frac12  \sigma (t, \xr)^{2} \nabla^2_{\xr}{\rm w}(t, \xr ) \Big) \\
		&+
		\Big( r^n(t, \xr') v^n(t, \xr')  + \mu^n(t, \xr')  \nabla_{\xr'} v^n(t, \xr') + \frac12  \sigma^n  (t, \xr')^{2} \nabla^2_{\xr'} v^n(t, \xr')  \Big) \\
		=&~
		-r^n(t, \xr') \phi^n(t, \xr, \xr') - \Delta r^n(t, \xr, \xr')  {\rm w}(t, \xr ) \\
		&
		-\mu^n(t, \xr') \big( \nabla_{\xr} \phi^n(t, \xr, \xr')+\nabla_{\xr'} \phi^n(t, \xr, \xr')\big) - \Delta \mu^n(t, \xr, \xr')  \nabla_{\xr} \phi^{n}(t, \xr,\xr') \\
		&-
		\frac12  \sigma^n (t, \xr')^2 \nabla^2_{\xr'} \phi^n(t, \xr, \xr') 
		\\&-
		\frac12 (\sigma^n+ \Delta \sigma^n)  (t, \xr, \xr')^2 \nabla^2_{\xr} \phi^n(t, \xr, \xr'),
	\end{align*}
	where $ \Delta r^n(t, \xr, \xr')  :=   r(t, \xr ) - r^n(t, \xr') $ satisfies
	$$
		\big| \Delta r^n(t, \xr, \xr') \big|  ~\le~L \rho_t \big( \xr , \Pi^n_{\eta_n(t)}[\xr'] \big),
	$$
	and $\Delta \mu^n(t, \xr, \xr')  := \mu(t, \xr ) - \mu^n(t, \xr') $, $\Delta \sigma^n(t, \xr, \xr')  := \sigma(t, \xr)  - \sigma^n(t, \xr') $ satisfy also
	$$
		\big| \Delta \mu^n(t, \xr, \xr') \big| + \big| \Delta \sigma^n(t, \xr, \xr') \big|
		~\le~
		 2 L  \rho_t \big( \xr , \Pi^n_{\eta_n(t)}[\xr'] \big).
	$$
	Therefore, $(t, \xr, \xr') \mapsto \phi^n(t, \xr, \xr')$  satisfies on $[0,T)\x D([0,T])^{2}$ 
	\begin{align*}
		0 = & -(\kappa^{n}(t,\xr,\xr')+r^n(t, \xr')) \phi^n(t, \xr,\xr')
		- \partial_t \phi^n (t,\xr,\xr')\\
		&- (b^{n}(t,\xr,\xr')+\mu^n(t, \xr'))(\nabla_{\xr} \phi^n(t, \xr,\xr')+\nabla_{\xr'} \phi^n(t, \xr,\xr')) - \Delta \mu^n(t, \xr, \xr')  \nabla_{\xr} \phi^{n}(t, \xr,\xr') \\
		&\;\;- a^{n}(t,\xr,\xr')\big(\nabla^{2}_{\xr} \phi^n(t, \xr,\xr')+\nabla^{2}_{\xr'} \phi^n(t, \xr,\xr')\big)\\
		&-
		\frac12  \sigma^n (t, \xr')^2 \nabla^2_{\xr'} \phi^n(t, \xr, \xr') 
		-\frac12 (\sigma^n+ \Delta \sigma^n)  (t, \xr, \xr')^2 \nabla^2_{\xr} \phi^n(t, \xr, \xr')\\
		&-( \Delta r^n(t, \xr, \xr')  {\rm w}(t, \xr ) +  f(t,\xr,\Pi^{n}_{\eta_n(t)}(\xr')))
	\end{align*}
	with 
	$$
	\phi^n (T,\xr,\xr')=g(\xr)-g(\Pi^{n}(\xr')), \mbox{ for } \xr,\xr'\in \Omega.
	$$
	Since $\nabla_{\xr}\nabla_{\xr'}\phi^{n}(\cdot,\xr,\xr')=0$,
	one can deduce that 
	$$
		\lim_{n \longrightarrow \infty} \phi^{n}(t,\xr,\xr)= 0,
		~\mbox{for all}~
		(t,\xr)\in \Theta,
	$$
	by the same arguments as in  (ii) of  the proof of Lemma  \ref{lem:Unif_conti_vni} by using \cite[Theorem 3.16]{possamai2020zero},  the functional It\^{o}'s formula of \cite{cont2013functional} and the corresponding Feynman-Kac's formula (recall that $v^{n}$ and ${\rm w}$ have polynomial growth).
	\end{proof}

\section{Regularity of approximate viscosity solutions}
\label{sec:Regularity}

	In this section, we   discuss the regularity of $\pi$-approximate viscosity solutions for a fully nonlinear PPDE and a semi-linear PPDE.
	Namely, we provide in this section some conditions under which the Dupire's vertical derivative exists and is continuous.
	One of the motivations to study such a regularity property is the fact that it allows to apply the functional It\^{o} formula in \cite{bouchard2021c}. 
	The study of further regularity of the solution, in particular the potential regularization effect of a uniform ellipticity condition,   is left for future researches. 

	\vspace{0.5em}

	All over this section, we set
	$$
		\Om = \DT
		~~\mbox{so that}~
		\Theta = [0,T] \x \DT.
	$$
	 We also recall that, for a functional defined on $\Theta$, its derivatives in the sense of Dupire is recalled in Section \ref{subsec:Comparisons}. 

\subsection{A fully nonlinear case}

A particular fully nonlinear operator has already been studied in \cite[Section 3.2]{bouchard2021c} in the case $d=1$ and for a PPDE of the form 
$
-\partial_{t}v-H(\nabla^{2}_{\xr} v)=0
$
with a convex function $H$. 
We provide here a  generalization which is proved by an appropriate modification of the arguments contained in the proof of  \cite[Proposition 3.13]{bouchard2021c} and by appealing to the a-priori estimate of  Lemma \ref{lem:Unif_conti_vni_t} and Remark \ref{rem:vn_Lip_Hold}. 
	In particular, we  allow for more general nonlinearities when the terminal condition is $C^{1+1}$ (in the sense of Assumption \ref{ass: regul fully non linear} below when $\alpha=1$). It fully uses the piecewise constant approximation of the path, embedded in the definition of approximate viscosity solutions, to reduce the analysis to the finite dimensional case which provides uniform estimates   on the corresponding sequence of $\pi^{n}$-viscosity solutions, for all $n\ge 1$.  

	\begin{Assumption} \label{ass: regul fully non linear}
		$\mathrm{(i)}$ The function $g\in \Cc_{L}$ and there exists  $\alpha \in (0,1]$ and  a finite positive measure $\lambda$ on $[0,T]$  such that,
		for all $\xr,\xr'\in \DT$,   $B = [s, t) \subset [0,T]$ and $\delta\in {\R^{d}}$,
		\begin{equation}\label{eq: g Lispch}
			|g(\xr)-g(\xr')|\le  \int_{0}^{T}|\xr_{s}-\xr_{s}'|\lambda(ds),
		\end{equation} 
		$\delta'\in {\R^{d} }\longmapsto g(\xr+\delta'\1_{B})$   is differentiable and\footnote{We abuse notations to write $d g(\cdot+\delta\1_{B}+\cdot)/d\delta$ for the corresponding derivative with respect to $\delta$.} 
		\begin{equation}\label{eq: measure sur derive}
			\Big| \frac{d g(\xr+\delta\1_{B}+\xr')}{d\delta} - \frac{ d  g(\xr+\delta\1_{B})}{d \delta} \Big |
			\le 
			\Big( \int_{0}^{T} \big| \xr'_{s} \big| \lambda(ds) \Big)^{\alpha} \lambda(B).
		\end{equation}
 		
		\noindent $\mathrm{(ii)}$ {For} any increasing sequence $0=t_{0}<t_{1}<\cdots<t_{n}=T$ with $\max_{i<n}|t_{i+1}-t_{i}|$ small enough, 
		for all $1\le i<j< n$ there exists  a diagonal matrix $p^{i,j}\in \S^{d}$ with non-zero diagonal terms  such that, 
		for all   $\delta \in \R^{d}$, and $(x_{\ell})_{0\le \ell {\le n}}\subset (\R^{d})^{{n+1}}$,
		\begin{align} \label{eq: hyp pi}
			&g\left( \sum_{\ell=0}^{n-1} (x_{\ell}+ { \delta} \1_{\{\ell = i\}})\1_{[t_{\ell},t_{\ell+1})} + \1_{\{T\}} { x_{{n}}} \right) \nonumber \\
			&= g\left(\sum_{\ell=0}^{n-1} (x_{\ell}+p^{i,j} {\delta}\1_{\{\ell\ge j\}})\1_{[t_{\ell},t_{\ell+1})} + \1_{\{T\}} ({ x_{{n}}} +p^{i,j} {\delta}) \right).
		\end{align}
		
		\noindent $\mathrm{(iii)}$ $F$ satisfies the conditions of Theorem \ref{thm: existence + Lipschitz cas differentiable}.
		 Moreover, $F$ is concave\footnote{or convex, which is equivalent up to replacing $F(\xr,\cdot)$ by $-F(\xr,-\cdot)$.} in $\gamma$ or $d \le 2$.
		Further, one of the following holds: 
			\begin{itemize}
			\item[{\rm (a)}]
			Either $\alpha\in (0,1)$ and 
			\begin{align}\label{eq: forme particuliere F regu fully non linear alpha le 1} 
			F(t,\xr,y,z,\gamma)=  F_{1}(t)y+F_{2}(t)  z+F_{3}(t,\gamma),\; (t,\xr,y,z,\gamma)\in \Theta\x \R\x \R^{d}\x \S^{d},
			\end{align}
			for some continuous maps $F_{i}$, $i=1,\ldots,3$.
			\item[{\rm (b)}] Or $\alpha=1$ and 
			\begin{align}\label{eq: F cas alpha=1} 
			F(t,\xr,y,z,\gamma)=F_{1}(t,y,\gamma)+F_{2}(t)z
			\end{align}
			for some continuous maps $F_{i}$, $i=1,2$, such that 
			  $y\in \R \longmapsto F_{1}(t,y,\gamma)$ is $C^{1}$ with bounded and Lipschitz first order derivative, uniformly in $\gamma\in \R^{d}$ and $t\le T$.  			
			  \end{itemize}
	\end{Assumption}
	
	\begin{Example}
		Let  {$d=1$,} $g_0:  {\R} \longrightarrow \R$ be in $C^{1+\alpha}$, and $\lambda$ be a finite positive measure on $[0,T]$ such that $\lambda([T-\eps, T]) > 0$ for any $\eps > 0$.
		Then the following $g: \DT \longrightarrow \R$ {defined by}
		$$
			g(\xr) ~:=~ g_0 \Big( \int_0^T \xr_t \lambda(dt) \Big)
		$$
		satisfies the conditions of Assumption \ref{ass: regul fully non linear}.
	\end{Example}

	Let us comment on the conditions in Assumption \ref{ass: regul fully non linear}.
	In \cite{bouchard2021c}, we essentially ``differentiate'' twice the $\pi^{n}$-viscosity solution $v^n$ of the regularized version of the PDE,
	and then derive the uniform boundedness and H\"older type estimates on the corresponding gradients of $v^n$.
	For this, we interpret the differentiated PDEs in terms of flow equations to propagate backward the regularity of the terminal condition $g$, see \eqref{eq: g Lispch}-\eqref{eq: measure sur derive}.
	The main difficulty is that a bump at time $t\in [t^{n}_{i},t^{n}_{i+1})$ on  $x$ induces a bump on the $i+1$-th element of $[\xr\boxplus_{t^{n}_{i+1}} x ]^{n}_{i+1}$ on $[t^{n}_{i+1},t^{n}_{i+2})$,
	because of the boundary condition 
	$$
		\lim_{t \nearrow t^{n}_{i+1},x'\to x} 
		v^{n}(t,\xr, x')= v^n \big( t^{n}_{i+1}, \Pi^n[\xr \bp_{t^{n}_{i+1}} x], x \big).
	$$
	Therefore, it turns, from the next term interval on, into a bump on a parameter of the PDEs. See also \eqref{eq: cond bord phi n j} in the proof below.  
	The structure condition \eqref{eq: hyp pi}   allows one  to transform a derivative with respect to a past value of the path into a derivative with respect to the end of the path. More precisely, we have the following. 

	\begin{Remark}\label{rem : transfert derives} 
		Assume that $v^{n}$ is a smooth $\pi^{n}$-viscosity solution of \eqref{eq:PPDE}-\eqref{eq: cond bord} and that   Assumption   \ref{ass: regul fully non linear} holds. 
		Then, a backward induction argument based on {\rm (ii)} of Assumption   \ref{ass: regul fully non linear} and the fact that $F$ does not depend on the path $\xr$ imply that 
$v^{n}(t,\cdot+\delta \1_{[t_{\ell},t_{\ell+1})} ,\cdot)=v^{n}(t,\cdot,\cdot+p^{\ell,i+1}\delta)$ for $t\in [t^{n}_{i},t^{n}_{i+1})$, $\ell <i+1\le n$, for some $p^{\ell,i+1}$. Letting $v^{n}_{i}$ be defined as in \eqref{eq: def vni} and denoting by $\partial_{\ell} v^{n}_{i}(t,[\xr]^{n}_{i},x)$ the gradient of $v^{n}_{i}(t,[\xr]^{n}_{i},x)$ with respect to the $\ell$-th component of $[\xr]^{n}_{i}$, we deduce that
		$$
			\partial_{\ell} v^{n}_{i}(t,[\xr]^{n}_{i},x)=p^{\ell,i+1}D v^{n}_{i}(t,[\xr]^{n}_{i},x).
		$$ 
		Note that this is no more possible if $F$ depends on $\xr$.
	\end{Remark}

	As for \eqref{eq: g Lispch}-\eqref{eq: measure sur derive}, it is used to absorb the cumulation of derivatives on different time steps generated by the above mentioned phenomena, see \eqref{eq: borne partial gn}-\eqref{eq: borne partial2 gn} below. 
	Notice that \eqref{eq: g Lispch}-\eqref{eq: measure sur derive} is a $C^{1+\alpha}$-type regularity assumption on $g$. It actually turns into a $C^{1+\alpha}$-regularity in space on the solution $\vr$ of the PPDE.  
	This translates, as usual, into an H\"older continuity in time, up to an additional term associated to the measure $\lambda$. 
	The interior regularity that could be obtained under a uniform ellipticity condition is by far not obvious and remains an open question.

	\begin{Theorem}\label{thm: C1alpha avec pji}  	
		Let Assumption \ref{ass: regul fully non linear}  hold true. Let $\vr$ be the unique $\pi$-approximate viscosity solution of \eqref{eq:PPDE}-\eqref{eq: cond bord}.
		Then, the vertical derivative $\nabla_{\xr} \vr(t,\xr)$ is well defined for all $(t, \xr) \in [0,T]\x\DT$, and there exists $C>0$ such that 
		$|\nabla_{\xr} \vr(t,\xr)|\le C$,
		\begin{equation} \label{eq:DV_vertical_continuity}
			|\nabla_{\xr} \vr(t,\xr')- \nabla_{\xr} \vr(t,\xr)|
			~\le~
			C \Big( \Big|\int_{0}^{t} |\xr'_{s}-\xr_{s}| \lambda(ds) \Big|^{\alpha}+|\xr'_{t}-\xr_{t}|^{\alpha}\Big),
		\end{equation}
		for all $(t, \xr, \xr') \in [0,T] \x \DT \x \DT$.
		Moreover, for all $K > 0$, there exists a constant $C_K> 0$ such that
		\begin{align}\label{eq: unif conti en temps de nabla v}
			|\nabla_{\xr} \vr(t',\xr_{t \wedge })- \nabla_{\xr} \vr(t,\xr)|
			~\le~
			C_K \Big( |t'-t|^{\frac{\alpha}{2+2\alpha}}+\limsup_{\eps\downarrow 0}\lambda([t-\eps,t'+\eps)) \Big),
		\end{align}
		for all $t \le t' \le T$ and $\xr\in \DT$ satisfying $\|\xr\| \le K$.
	\end{Theorem}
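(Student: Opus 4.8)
The plan is to establish, \emph{uniformly in} $n$, a $\Cb^{1+\alpha}$-type bound in the space variable and a H\"older bound in time for the $\pi^n$-viscosity solutions $v^n$, and then to let $n\to\infty$. As a preliminary step, exactly as in Step~$\mathrm{(i)}$ of the proof of Proposition~\ref{prop: consistence solution smooth}, one regularizes $F^n$ (adding, if necessary, a vanishing Laplacian to restore uniform ellipticity) and uses the assumption that $F$ is concave in $\gamma$ or $d\le 2$ to obtain, on each slab $[t^n_i,t^n_{i+1})$, smooth classical solutions converging locally uniformly to $v^n$; it therefore suffices to prove the estimates assuming each $v^n$ is smooth. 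Writing $v^n_i$ as in \eqref{eq: def vni}, Remark~\ref{rem : transfert derives} --- which relies on Assumption~\ref{ass: regul fully non linear}$\mathrm{(ii)}$ and on the fact that, under \eqref{eq: forme particuliere F regu fully non linear alpha le 1} or \eqref{eq: F cas alpha=1}, $F$ does not depend on the path --- converts any derivative of $v^n_i$ with respect to a past coordinate of $[\xr]^n_i$ into a fixed matrix multiple $p^{\ell,i+1}Dv^n_i$ of the endpoint gradient $Dv^n_i$. This is what keeps the analysis finite-dimensional with a controlled set of ``active'' directions.

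First I would differentiate the slab PDE~\eqref{eq:PDE_vni} in the endpoint variable $x$. Under \eqref{eq: forme particuliere F regu fully non linear alpha le 1} (case $\alpha<1$) the equation is linear in $(y,z,\gamma)$, so $Dv^n_i$ and the incremental quotients of $Dv^n_i$ in $x$ solve linear parabolic equations with bounded coefficients; under \eqref{eq: F cas alpha=1} (case $\alpha=1$) the term $F_1(t,y,\gamma)$ is $C^1$ with bounded Lipschitz $y$-derivative, so the same holds after freezing the Hessian along the solution. Feynman--Kac representations of these equations propagate backward the terminal regularity \eqref{eq: g Lispch}--\eqref{eq: measure sur derive} of $g$. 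The delicate point is the slab interface: because of the boundary condition $\lim_{t\nearrow t^n_{i+1}}v^n_i(t,[\xr]^n_i,x)=v^n_{i+1}(t^n_{i+1},([\xr]^n_i,x),x)$, differentiating in $x$ produces a derivative in the endpoint \emph{and} in the newly frozen $(i{+}1)$-th coordinate of $v^n_{i+1}$; the latter is turned, via Remark~\ref{rem : transfert derives}, into a multiple $p^{i+1,i+2}Dv^n_{i+1}$ of the endpoint gradient, and the recursion closes. Following the argument in the proof of \cite[Proposition~3.13]{bouchard2021c}, the cumulation of these interface contributions over the $n$ slabs is controlled \emph{independently of} $n$: the weight picked up at a slab $[t^n_i,t^n_{i+1})$ is, through \eqref{eq: hyp pi} and \eqref{eq: g Lispch}--\eqref{eq: measure sur derive}, governed by $\lambda([t^n_i,t^n_{i+1}))$ (resp. $\lambda([t^n_i,t^n_{i+1}))^\alpha$ for the H\"older part), and summing over $i$ yields a bound in terms of $\lambda([0,T])$ only (using concavity of $x\mapsto x^\alpha$ for the H\"older part). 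This gives $|Dv^n(t,\xr,x)|\le C$ and $|Dv^n(t,\xr,x')-Dv^n(t,\xr,x)|\le C(|\int_0^t|\xr'_s-\xr_s|\lambda(ds)|^\alpha+|x'-x|^\alpha)$, uniformly in $n$, and likewise for the path-dependence.

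Next I would deduce the time regularity of the gradient by interpolation. Lemma~\ref{lem:Unif_conti_vni_t} and Remark~\ref{rem:vn_Lip_Hold}, using that the special structure of $g$ and $F$ permits taking $\varpi_\circ'(x)=x$ (up to the additive term $\limsup_{\eps\downarrow 0}\lambda([t-\eps,t'+\eps))$ coming from bumps located exactly between $t$ and $t'$), show that $v^n$ is locally $\tfrac12$-H\"older in $t$, uniformly in $n$. Applying the elementary interpolation inequality $\|Dh\|_\infty\le C\|h\|_\infty^{\alpha/(1+\alpha)}[Dh]_{C^\alpha}^{1/(1+\alpha)}$ to the difference $v^n(t,\xr,\cdot)-v^n(t',\xr,\cdot)$ viewed as a function of the space variable, and combining the $\tfrac12$-H\"older-in-time bound on $v^n$ with the uniform $\Cb^{1+\alpha}$-in-space bound just obtained, gives a time modulus $|t'-t|^{\beta}$ for $Dv^n$ with $\beta=\tfrac12\cdot\tfrac{\alpha}{1+\alpha}=\tfrac{\alpha}{2+2\alpha}$, again uniformly in $n$, i.e. estimate \eqref{eq: unif conti en temps de nabla v} at the level of $v^n$.

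Finally, I would pass to the limit. Since $\vr$ is the $\pi$-approximate solution, $v^n(t,\xr\bp_t y,y)\to\vr(t,\xr\bp_t y)$ for every fixed $(t,\xr,y)$. The uniform bound and $\alpha$-H\"older estimate on the map $y\mapsto v^n(t,\xr\bp_t y,y)$ allow one to extract, by Arzel\`a--Ascoli, a subsequence along which the $y$-derivatives converge locally uniformly; the limit is then the derivative at $y=\xr_t$ of $y\mapsto\vr(t,\xr\bp_t y)$, i.e. the Dupire vertical derivative $\nabla_\xr\vr(t,\xr)$, which is independent of the subsequence by uniqueness of the pointwise limit $\vr$ (Theorem~\ref{thm: existence + Lipschitz cas differentiable}). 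Passing to the limit in the three families of uniform estimates then yields boundedness of $\nabla_\xr\vr$, \eqref{eq:DV_vertical_continuity} and \eqref{eq: unif conti en temps de nabla v}. I expect the main obstacle to be precisely the uniform-in-$n$ accumulation control of the second paragraph: tracking how a bump created at one time slab is transported, via the matrices $p^{\ell,\cdot}$, to all later slabs, and checking that the resulting accumulation is summed against the finite measure $\lambda$ rather than against the number $n$ of slabs --- this is the whole reason for the measure-theoretic formulation \eqref{eq: g Lispch}--\eqref{eq: measure sur derive} and for the transfer identity of Remark~\ref{rem : transfert derives}.
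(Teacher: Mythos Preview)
Your proposal is correct and follows essentially the same route as the paper: regularise so that $v^n$ is smooth, differentiate the slab PDE in the endpoint to obtain a linear parabolic equation for $\phi^{n,j}:=\partial_{x_j}V^n$ with the interface rule $\lim_{t\uparrow t^n_{i+1}}\phi^{n,j}=\phi^{n,j}+\1_{\{j=i+1\}}\phi^{n,i+2}$, and use Feynman--Kac together with \eqref{eq: g Lispch}--\eqref{eq: measure sur derive} to show that the accumulated interface contributions are summed against $\lambda$ rather than against $n$, yielding the uniform bound \eqref{eq: borne phi n j}. The paper makes the $\alpha<1$/$\alpha=1$ split explicit (for $\alpha<1$ it defers to \cite[Proposition~3.13]{bouchard2021c}; for $\alpha=1$ it differentiates a second time, $\psi^{n,j,\ell}:=\partial_{x_\ell}\phi^{n,j}$, and uses \eqref{eq: F cas alpha=1} plus Remark~\ref{rem : transfert derives} to close the resulting equation), but this is precisely your ``incremental quotients of $Dv^n_i$'' step.

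The one real difference is in the time-regularity argument, and there your interpolation short-cut needs a small correction. The paper does not invoke the abstract inequality $\|Dh\|_\infty\le C\|h\|_\infty^{\alpha/(1+\alpha)}[Dh]_{C^\alpha}^{1/(1+\alpha)}$; it writes out the equivalent difference-quotient telescoping (replace $Dv^n$ by $h^{-1}(v^n(\cdot,x+h)-v^n(\cdot,x))$ at cost $O(h^\alpha)$, then take $|t'-t|\sim h^{2+2\alpha}$), which gives the same exponent $\alpha/(2+2\alpha)$. The point is that Lemma~\ref{lem:Unif_conti_vni_t} only bounds $|v^n(t',\xr_{t\wedge},\xr_t)-v^n(t,\xr,\xr_t)|$ at the \emph{specific} endpoint $\xr_t$; to run your interpolation on $h(x)=v^n(t',\xr_{t\wedge},x)-v^n(t,\xr,x)$ you first need the time-H\"older uniformly in $x$, and for that you must compare the paths $\xr_{t\wedge}$ and $(\xr\bp_t x)_{t\wedge}$, which differ by $\xr_t-x$ on $[t,t')$. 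It is precisely the first-derivative bound \eqref{eq: borne phi n j} on past coordinates that controls this discrepancy by $C|\xr_t-x|\,\lambda([t-|\pi^n|,t'+|\pi^n|))$, and \emph{that} is where the $\lambda$-term in \eqref{eq: unif conti en temps de nabla v} enters --- not, as you write, from the time-H\"older estimate on $v^n$ itself. (As stated, your interpolation inequality is also false for affine $h$; the correct local version, optimised over a length scale, requires exactly this uniform-in-$x$ input.) The paper's telescoping isolates this contribution as the term $h^{-1}\big|v^n(t',\xr_{t\wedge},\xr_t+h)-v^n(t',(\xr+\1_{\{t\}}h)_{t\wedge},\xr_t+h)\big|$. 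Once this is fixed, the two arguments are equivalent.
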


\begin{proof} 
	For ease of notations, we restrict to $d=1$. When $d>1$, it suffices to derive the following estimates for each dimension separately.
	By (i) of Assumption \ref{ass: regul fully non linear}, upon smoothing
	$$
		g^{n}: (x_{0},\ldots,x_{n})\in \R^{n+1} \longmapsto g \Big( \sum_{i=0}^{n-1} x_{i}\1_{[t^{n}_{i},t^{n}_{i+1})}+x_{n}\1_{\{T\}} \Big),
	$$
	we can assume that it is in $C^{\infty}$ and that 
	\begin{align}
		|\partial_{x_{i}} g^{n} (x_{0},\ldots,x_{n})|&\le \lambda([t^{n}_{i},t^{n}_{i+1})),\;i,j\le n, \;\mbox{if $\alpha\in (0,1]$,}\label{eq: borne partial gn} \\
		|\partial_{x_{i}x_{j}}^{2} g^{n} (x_{0},\ldots,x_{n})|&\le \lambda([t^{n}_{i},t^{n}_{i+1}))\lambda([t^{n}_{j},t^{n}_{j+1})),\;i,j\le n, \;\mbox{if $\alpha=1$,}\label{eq: borne partial2 gn} 
	\end{align}
	with the convention $\lambda([t^{n}_{n},t^{n}_{n+1}) ) :=\lambda(\{t^{n}_{n}\})$.
	Let $V^{n}$ be defined by
	$$
		V^{n}(t,[\xr]^{n}_{i},x)=v^{n}(t,\xr,x),\;(\xr,x)\in D([0,T])\x \R^{d},\;t\in [t^{n}_{i},t^{n}_{i+1}), \;i<n.
	$$

	\noindent $\mathrm{(i)}$ 
	Upon  smoothing also $F$ and adding a uniform elliptic term, we can   assume    that  $V^{n}$ is $C^{1,3}_{b}$ on $[0,T)${, see e.g.~\cite[Theorem 14.15]{lieberman1996second} and \cite[Appendix D]{pham2014two}}. The estimates below will not depend upon this smoothing procedure.   Let $\phi^{n,j}$ denote the gradient of $V^{n}$ with respect to the $j$-th space component, and denote by $D\phi^{n,j}$ and  $D^{2}\phi^{n,j}$ the first and second order derivatives of $\phi^{n,j}$ with respect to its last space variable. On each $[t^{n}_{i},t^{n}_{i+1})$,  $\phi^{n,j}$, $j\le i+1$,   solves 
$$
0=-\partial_{t}\phi^{n,j} -\partial_{y }F(\Xi)\phi^{n,j}-\partial_{z}F (\Xi) D\phi^{n,j}-\partial_{\gamma}F(\Xi)D^{2}\phi^{n,j} 
$$
where $\Xi:=(\cdot,V^{n},DV^{n},D^{2}V^{n})$, 
with the boundary condition 
\begin{align}\label{eq: cond bord phi n j} 
\lim_{t\uparrow t^{n}_{i+1}} \phi^{n,j}(t,y,x)=\phi^{n,j}(t^{n}_{i+1},(y,x),x)+\1_{\{j=i+1\}}\phi^{n,i+2}(t^{n}_{i+1},(y,x),x),\;(y,x)\in \R^{i}\x \R. 
\end{align}
By Feynman-Kac's formula and Assumption \ref{ass: regul fully non linear},  for each $(t,z)\in [t^{n}_{i},t^{n}_{i+1})\x \R^{i+1}$, $j\le i+1\le n$, we can then find a process $X$ and a bounded random variable $\beta$, with bound $C_{\beta}>0$ depending only on $F$, such that 
	$$
		\phi^{n,j}(t,z)
		=
		\E \Big[ \beta \Big( 
			\partial_{j}g^{n}(X_{t^{n}_{0}},\ldots,X_{t^{n}_{n}})+\1_{\{j=i+1\}}\sum_{j'=i+2}^{n} \partial_{j'}g^{n}(X_{t^{n}_{0}},\ldots,X_{t^{n}_{n}}) 
		\Big) \Big].
	$$
By \eqref{eq: borne partial gn}, it follows that  
\begin{align}\label{eq: borne phi n j} 
|\phi^{n,j}(t,\cdot)|\le C_{\beta}\left(\lambda([t^{n}_{j},t^{n}_{j+1}))+\1_{\{j=i+1\}}\lambda([t^{n}_{i+2},T])\right), \; \mbox{on } [t^{n}_{i},t^{n}_{i+1}),\; j\le  i+1\le n. 
\end{align}

	\noindent $\mathrm{(ii)}$
	In the case $\alpha<1$, the rest of the proof is exactly the same as the proof of  \cite[Proposition 3.13]{bouchard2021c} up to some obvious modifications. 
	In fact, it suffices to replace their two first estimates at the very beginning of part 4.c.~of their proof by 
	\eqref{eq: borne phi n j} and \eqref{eq:vn_holder_t}  (together with Remark \ref{rem:vn_Lip_Hold}). 

	\vspace{0.5em}

	\noindent $\mathrm{(iii)}$ 
	In the following, we consider the case $\alpha=1$, where the argument is slightly different and we only sketch the proof.

	\vspace{0.5em}

	(a) Let us set $\psi^{n,j,\ell}:=\partial_{x_{\ell}} \phi^{n,j}$, which solves the PDE
\begin{align}
0=& -\partial_{t}\psi^{n,j,\ell} -\partial_{y }F(\Xi)\psi^{n,j,\ell}-\partial_{z}F (\Xi) D\psi^{n,j,\ell}-\partial_{\gamma}F(\Xi)D^{2}\psi^{n,j,\ell}\nonumber
\\
&- \phi^{n,j}\left( \partial^{2}_{yy }F(\Xi) \phi^{n,\ell}+\partial^{2}_{yz }F(\Xi)D\phi^{n,\ell}+\partial^{2}_{y\gamma }F(\Xi) D^{2}\phi^{n,\ell}\right)\label{eq: dyz}\\
&-D\phi^{n,j}\left( \partial^{2}_{zy }F(\Xi) \phi^{n,\ell}+\partial^{2}_{zz }F(\Xi)D\phi^{n,\ell}+\partial^{2}_{z\gamma }F(\Xi) D^{2}\phi^{n,\ell}\right)\nonumber\\
&-D^{2}\phi^{n,j}\left( \partial^{2}_{\gamma y }F(\Xi) \phi^{n,\ell}+\partial^{2}_{\gamma z }F(\Xi)D\phi^{n,\ell}+\partial^{2}_{\gamma\gamma }F(\Xi) D^{2}\phi^{n,\ell}\right).\nonumber
 \end{align}
 By \eqref{eq: F cas alpha=1}  and Remark \ref{rem : transfert derives}, we can then find $q^{\ell,i+1}\in \R$, that depends on $n$, such that
 \begin{align*}
0=& -\partial_{t}\psi^{n,j,\ell} - \partial_{y }F(\Xi)  \psi^{n,j,\ell}\\
&-\left(\partial_{z}F (\Xi)+q^{\ell,i+1}\phi^{n,j}\partial^{2}_{y\gamma }F(\Xi)+q^{\ell,i+1}D\phi^{n,j} \partial^{2}_{z\gamma }F(\Xi)\right) D\psi^{n,j,\ell}\\
&-q^{\ell,i+1}\left( \partial^{2}_{\gamma y }F(\Xi) \phi^{n,\ell}+\partial^{2}_{\gamma z }F(\Xi)D\phi^{n,\ell}+\partial^{2}_{\gamma\gamma }F(\Xi) D^{2}\phi^{n,\ell}\right)D\psi^{n,j,\ell}\\
&-\partial_{\gamma}F(\Xi)D^{2}\psi^{n,j,\ell}- \phi^{n,j} \partial^{2}_{yy }F(\Xi) \phi^{n,\ell} .
 \end{align*}
Moreover, 
\begin{align*}
\lim_{t\uparrow t^{n}_{i+1}}\psi^{n,j,\ell}(t,y,x)=&	\1_{\{\ell < i+1\}} \left( \psi^{n,j,  {\ell}} +\1_{\{j=i+1\}}\psi^{n,i+2, {\ell}} \right) (t^{n}_{i+1},(y,x),x)\\
		&
		+\1_{\{\ell=i+1\}} 
		\left( \psi^{n,j, {i+2} }  +\1_{\{j=i+1\}}\psi^{n,i+2,  {i+2} }\right) (t^{n}_{i+1},(y,x),x),
\end{align*}
for $(y,x)\in \R^{i}\x \R$. Then, by (iii) of Assumption \ref{ass: regul fully non linear} and  \eqref{eq: borne phi n j}, one can apply the Feynman-Kac's formula again to find, for given $(t,z)\in [t^{n}_{i},t^{n}_{i+1})\x \R^{i+1}$, $i< n$,  and $j,\ell\le i+1$,  adapted processes $X, \beta^{1}$ and $\beta^{2}$, that depend on $n$ but such that $|(\beta^{1},\beta^{2})|\le \tilde C_{\beta}$ for some $\tilde C_{\beta}>0$ that only depend on $F$ and $g$, for which 
	\begin{align*}
		\psi^{n,j,\ell}(t,z)
		=&
		\1_{\{\ell < i+1\}} 
		\E \Big[\beta^{1}_{T} \Big(\partial^{2}_{x_{j}x_{\ell}} g^{n} 
			+
			\1_{\{j=i+1\}} \!\!\! \sum_{j'=i+2}^{n}\partial^{2}_{x_{j'}x_{\ell}} g^{n}  \Big) 
			\big( \Pi^{n}[X] \big) 
		\Big]\\
		&+\1_{\{\ell=i+1\}}  \sum_{k=\ell+2}^{n}
		\E \Big[\beta^{1}_{T}  \Big(\partial^{2}_{x_{j}x_{k}} g^{n} 
			+
			\1_{\{j=i+1\}} \!\!\! \sum_{j'=i+2}^{n-1} \partial^{2}_{x_{j'}x_{k}}g^{n} \Big) 
			\big( \Pi^{n}[X] \big) 
		\Big]\\
		&+ \E \Big[\int_{t}^{T} \beta^{2}_{s}ds\Big].
	 \end{align*}
	Thus, by \eqref{eq: borne partial2 gn}, 
	\begin{align}
		|\psi^{n,j,\ell}(t,z)|&\le \tilde C_{\beta} \lambda([0,T])^{2}+\tilde C_{\beta}T.\label{eq: borne psi n j ell}
	\end{align}

	(b)  Recall that $\alpha =1$.
	It follows from \eqref{eq:vn_holder_t}  (by setting $\varpi_{\circ}(x) := x$ for all $x \ge 0$, see Remark \ref{rem:vn_Lip_Hold}), \eqref{eq: borne phi n j} and \eqref{eq: borne psi n j ell} that we can find $C_{K}$ and $C_{K}'$, independent of $n$, such that
	\begin{align*}
		& \big| D  v^{n}(t+h^{4},   \xr_{t \wedge },\xr_{t} ) - D  v^{n}(t,  \xr, \xr_{t}) \big| \\
		&\le h^{-1} \big|v^{n}(t+h^{4}, \xr_{t \wedge },\xr_{t} +h) - v^{n}(t+h^{4}, \xr_{t \wedge },\xr_{t} ) - v^{n}(t, \xr, \xr_{t}+h)+ v^{n}(t, \xr,\xr_{t} ) \big|\\
		&\;\;+2C_{K}h \\
		&\le  h^{-1} \big| v^{n}(t+h^{4},( \xr  +\1_{\{t\}} h)_{t \wedge },  \xr_{t}+h)- v^{n}(t, \xr ,  \xr_{t}+h) \big|\\
		&\;\; +  h^{-1} \big| v^{n}(t, \xr,\xr_{t} )- v^{n}(t+h^{4}, \xr_{t \wedge },\xr_{t} ) \big|\\
 		&\;\;+ h^{-1} \big|v^{n}(t+h^{4}, \xr_{t \wedge },\xr_{t}+h)- v^{n}(t+h^{4},( \xr   +\1_{\{t\}} h)_{t \wedge },\xr_{t}+h) \big|\\
		&\;\;+2C_{K}h\\
		&\le 2C'_{K}h+C_{\beta} \lambda([t-|\pi^{n}|,t+h^{4}+|\pi^{n}|)),
	\end{align*} 
	for all $\xr \in D([0,T])$ and $t< t+h\le T$.
	
	\vspace{0.5em}

	 The rest of the proof is similar to the one of   \cite[Proposition 3.13]{bouchard2021c}: {(i)}  implies that $\nabla_{\xr}v$ is well-defined on a dense subset of $  \Theta$, including the (countably many) atoms of $\lambda$, while {(a)} and {(b)} allow to extend it in a continuous way and imply that the extension is actually the pointwise limit of $Dv^{n}$ on $\Theta$.  The required estimates are deduced from  {(a)} and {(b)} .
\end{proof}

\subsection{A semi-linear case}

	We now  consider a semi-linear PPDE {with generator} of the form
	\begin{equation} \label{eq:BSDE_F}
		F(t, \xr, y, z, \gamma) 
		~=~
		f(t, \xr, y,  \sigma^{\top} z ) + \mu(t, \xr) \cdot z + \frac12 \Tr \big[ \sigma \sigma^{\top} \gamma \big],
	\end{equation}
	for some non-anticipative functionals $f: \Theta \x \R \x \R^d \longrightarrow \R$ and  $(\mu, \sigma): \Theta \longrightarrow \R^d \x \S^d$. In this case, we can remove the structure condition \eqref{eq: hyp pi}. We make the following assumption on the coefficients. 
	
	\begin{Assumption} \label{eq:BSDE_FrechetCond}
		$\mathrm{(i)}$
		The functionals $\mu$ and $\sigma$ are  continuous.
		For each $t \in [0,T]$, the map $\xr\in D([0,T]) \longmapsto (\mu(t, \xr), \sigma(t, \xr))$ is Fr\'echet differentiable with derivative\footnote{We identify the Fr\'echet derivatives with the associated measures on $[0,T]$.} $(\lambda_{\mu}, \lambda_{\sigma}) (\cdot;t,\xr)$ at $\xr \in \DT$.
		
		\vspace{0.5em}
		
		\noindent $\mathrm{(ii)}$ The functional $g$ is Fr\'echet differentiable with derivative $\lambda_g(\cdot; \xr)$ at $\xr \in \DT$.
		The functional $f$ is continuous and non-anticipative, and for each $(t, y, z) \in [0,T] \x \R \x \R^d$, the map $\xr \longmapsto f(t, \xr, y,z)$ is Fr\'echet differentiable with derivative $\lambda_f(\cdot; t, \xr, y, z)$ at $\xr \in \DT$.
		Further, $(y, z){\in \R\x \R^{d }} \longmapsto f(t, \xr, y, z)$ is differentiable with  derivatives $(\partial_y f, \partial_z f)$ which are bounded and continuous in $(t, \xr, y, z)$.
		
		\vspace{0.5em}
		
		\noindent $\mathrm{(iii)}$ There exists some finite positive measure $\lambda$ on $[0,T]$, such that, for all $(t, \xr, y, z) \in [0,T] \x \DT \x \R \x \R^d$,
		$$
			\big| \lambda_{\mu} (\cdot; t, \xr) \big|
			+
			\big| \lambda_{\sigma} (\cdot; t, \xr) \big|
			+
			\big| \lambda_{f} (\cdot; t, \xr, y, z) \big|
			+
			\big| \lambda_{g} (\cdot; \xr)  \big|
			\ll \lambda(\cdot).
		$$

		\noindent $\mathrm{(iv)}$ 
		There exist constants $\alpha\in [0,1]$ and $C_{\alpha}>0$ such that 
		\begin{align*}
			&\left|\int_{0}^{T} \tilde \xr_{r} \lambda_{g}(dr;\xr) -\int_{0}^{T} \tilde \xr_{r} \lambda_{g}(dr;\xr')\right|\le C_{\alpha} \|\xr-\xr'\|^{\alpha} \|\tilde \xr\|,
			\\
			&\left|\int_{0}^{T} \tilde \xr_{r} \lambda_{\ell}(dr;s,\xr) -\int_{0}^{T} \tilde \xr_{r} \lambda_{\ell}(dr;s,\xr')\right|\le C_{\alpha} \|\xr-\xr'\|^{\alpha} \|\tilde \xr\| ,\;\ell\in \{b,\sigma \},
			\\
			&\left|\int_{0}^{T} \!\! \tilde \xr_{r} \lambda_{f}(dr;s,\xr,y,z) -\int_{0}^{T} \!\! \tilde \xr_{r} \lambda_{f}(dr;s,\xr',y',z')\right|\le C_{\alpha}\left(\|\xr-\xr'\|+|y-y'|+|z-z'|\right)^{\alpha} \|\tilde \xr\|, \\
			&\left|(\partial_{y},\partial_{z})f(s,\xr,y,z)-(\partial_{y},\partial_{z})f(s,\xr',y',z')\right|\le C_{\alpha}\left(\|\xr-\xr'\|+|y-y'|+|z-z'|\right)^{\alpha},
		\end{align*}
		{for all $\xr,\xr',\tilde \xr \in D([0,T])$ and $(s,y,z)\in [0,T]\x \R\x \R^{d}$.}
	\end{Assumption}

	Under Assumption \ref{eq:BSDE_FrechetCond},   \eqref{eq:PPDE}-\eqref{eq: cond bord} has a unique $\pi$-approximate viscosity solution $\vr$ (see Proposition \ref{prop : stoch representation}), which corresponds to the solution of a BSDE.
	We will then show that $\vr$ has a Dupire's vertical derivative which enjoys further continuity conditions. 
	In the following, we assume that we are given a fixed complete probability space, equipped with a Brownian motion $W$ together with the Brownian filtration.
	Let $S^2([t, T])$ denote the space of $\R^{d}$-valued predictable processes $X = (X_s)_{s \in [t,T]}$ such that $\E\big[ \sup_{t \le s \le T} |X_s|^{2} \big] < \infty$,
	 and $H^2([t,T])$ denote the space of $\R^{d}$-valued predictable processes $Z = (Z_s)_{s \in [t,T]}$ such that $\E \big[\int_{t}^{T}|Z_{s}|^{2}ds \big] < \infty$.

	\begin{Theorem}\label{thm : regul BSDE}
		$\mathrm{(i)}$ Let $F$ be given by \eqref{eq:BSDE_F}, and let Assumption \ref{eq:BSDE_FrechetCond} hold true. 
		Let $\vr$ be the unique $\pi$-approximate viscosity solution of \eqref{eq:PPDE}-\eqref{eq: cond bord}.
		Then, it admits a Dupire's vertical derivative $\nabla_{\xr} \vr$ on $[0,T)\x D([0,T])$ and there exists $C>0$ such that 
		\begin{equation*}  
			|\nabla_{\xr} \vr(t,\xr')- \nabla_{\xr} \vr(t,\xr)|
			~\le~
			C \|\xr'-\xr\|^{\alpha},
		\end{equation*}
		and 
		\begin{align*} 			
		|\nabla_{\xr} \vr(t',\xr_{t \wedge })- \nabla_{\xr} \vr(t,\xr)|
			~\le~
			C \big(|t'-t|^{\frac{\alpha}{2+2\alpha}}(1+\|\xr\|)+\lambda([t,t'))   \big),  
		\end{align*}
		for all $t\le t'\le T$ and $\xr,\xr'\in \DT$. 
		
		\vspace{0.5em}
		
		\noindent $\mathrm{(ii)}$
		Moreover, if $\lambda$ has finitely many atoms, then $\vr$ solves the stochastic path-dependent equation 
		\begin{align*}
			\vr(\cdot,X^{t,\xr}) 
			= 
			g \big( X^{t,\xr} \big)  
			+ 
			\! \int_{\cdot}^{T} \!\!\!\! f(u, X^{t,\xr}, \vr(u,X^{t,\xr}), [\sigma^{\top}\nabla_{\xr} \vr](u,X^{t,\xr})  ) du
			- 
			\! \int_{\cdot}^{T} \!\!\! [\nabla_{\xr} \vr\sigma] (u,X^{t,\xr}) dW_{u},
		 \end{align*}
		 in which $X^{t, \xr} \in S^2([0,T])$ is the unique solution of 
		$$
		X^{t,\xr}_s = \xr_{t \wedge s}+ \int_{t}^{t\vee s} \mu(u, X^{t,\xr}) du +\int_{t}^{t \vee s} \sigma(u, X^{t,\xr}) dW_{u},\;t\le s\le T.
	        $$
	\end{Theorem}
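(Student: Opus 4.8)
The plan is to exploit the probabilistic representation of $\vr$ as a BSDE value and differentiate the forward flow. Recall from Proposition~\ref{prop : stoch representation} that $\vr(t,\xr) = Y^{t,\xr}_t$, where $(X^{t,\xr},Y^{t,\xr},Z^{t,\xr})$ solves the decoupled FBSDE with coefficients $(\mu,\sigma,f,g)$. First I would establish, under Assumption~\ref{eq:BSDE_FrechetCond}, that the map $\xr\in\DT\longmapsto X^{t,\xr}\in S^2([0,T])$ is Fr\'echet differentiable, with derivative a linear flow $\nabla X^{t,\xr}$ solving the (matrix-valued) variational SDE obtained by formally differentiating the forward equation (using $\lambda_\mu,\lambda_\sigma$). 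This is a standard fixed-point argument in $S^2$; the H\"older bounds in Assumption~\ref{eq:BSDE_FrechetCond}.$\mathrm{(iv)}$ give local $\alpha$-H\"older continuity of $\xr\mapsto \nabla X^{t,\xr}$. Next, by the BSDE differentiability theory (see e.g.~\cite{el1997backward}), $\xr\mapsto(Y^{t,\xr},Z^{t,\xr})$ is differentiable in $S^2\times H^2$, with $(\nabla Y,\nabla Z)$ solving the linear BSDE obtained by differentiating; in particular $\xr\mapsto Y^{t,\xr}_t=\vr(t,\xr)$ is Fr\'echet differentiable and the bump-at-the-endpoint direction recovers the Dupire vertical derivative $\nabla_\xr\vr(t,\xr)$. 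The $\alpha$-H\"older estimate in $\xr$ then follows from standard a priori BSDE stability estimates applied to the difference of the two linear variational systems at $\xr$ and $\xr'$, controlled by $\|\xr-\xr'\|^\alpha$ via Assumption~\ref{eq:BSDE_FrechetCond}.$\mathrm{(iv)}$.

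For the identification of the Dupire derivative and the stochastic path-dependent equation in part $\mathrm{(ii)}$, I would argue as follows. Since $X^{t,\xr}_s = X^{s,X^{t,\xr}}_s$ for $s\ge t$ (flow property), the function $u\mapsto \vr(u,X^{t,\xr})$ equals $Y^{t,\xr}_u$, so $\vr(\cdot,X^{t,\xr})$ automatically satisfies a BSDE with driver $f(u,X^{t,\xr},\vr(u,X^{t,\xr}),\cdot)$; it remains only to identify the martingale integrand $Z^{t,\xr}_u$ with $[\nabla_\xr\vr\,\sigma](u,X^{t,\xr})$. This identification is the classical "$Z = \nabla Y\,\sigma$" relation: one writes $Y^{t,\xr}_u = \vr(u,X^{t,\xr}_u)$ using the flow property and, when $\lambda$ has finitely many atoms so that $\vr$ is regular enough (continuous vertical derivative, local Lipschitz/H\"older in time from the first display in the theorem), applies the functional It\^o formula of \cite{cont2013functional} to $\vr(\cdot,X^{t,\xr})$; matching the $dW$ terms yields $Z^{t,\xr}_u = \nabla_\xr\vr(u,X^{t,\xr}_u)\,\sigma(u,X^{t,\xr})$ $du\otimes d\P$-a.e., and continuity upgrades this to an identity.

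The time-regularity estimate for $\nabla_\xr\vr(t',\xr_{t\wedge})-\nabla_\xr\vr(t,\xr)$ I would obtain by combining: (i) the $\alpha$-H\"older-in-$\xr$ bound just proved, applied to compare $\nabla_\xr\vr(t',X^{t,\xr}_{t'\wedge})$ with $\nabla_\xr\vr(t',\xr_{t\wedge})$ after controlling $\E[\|X^{t,\xr}_{t'}-\xr_t\|^2]^{1/2}\le C(1+\|\xr\|)|t'-t|^{1/2}$; (ii) a short-time estimate on the increment of $\nabla Y$ over $[t,t']$ from its linear BSDE, which produces the $|t'-t|^{\alpha/(2+2\alpha)}$ rate after an interpolation between the $L^2$ modulus $|t'-t|^{1/2}$ of the flow and the $\alpha$-H\"older exponent (this is exactly the same balancing that produces the exponent $\tfrac{\alpha}{2+2\alpha}$ in Theorem~\ref{thm: C1alpha avec pji}); and (iii) the term $\lambda([t,t'))$ absorbing the contribution of the Fr\'echet-derivative measures charging the interval $[t,t')$, analogously to the role of $\lambda$ there. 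The main obstacle I anticipate is not any single estimate but the bookkeeping needed to pass rigorously from Fr\'echet differentiability of $\xr\mapsto\vr(t,\xr)$ to the pointwise Dupire vertical derivative and to justify the functional It\^o formula: one must check the precise regularity hypotheses of \cite{cont2013functional} (continuity and local boundedness of $\partial_t\vr$, $\nabla_\xr\vr$, $\nabla^2_\xr\vr$) — in the semi-linear case $\nabla^2_\xr\vr$ need not exist, so one should instead invoke the $\Cb^{0,1}$-functional It\^o formula of \cite{bouchard2021c}, which is exactly why the finitely-many-atoms assumption on $\lambda$ is imposed in part $\mathrm{(ii)}$.
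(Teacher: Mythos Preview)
Your proposal is correct and follows essentially the same route as the paper: the probabilistic representation $\vr(t,\xr)=Y^{t,\xr}_t$, differentiation of the forward flow and of the BSDE to identify $\nabla_\xr\vr(t,\xr)$ with the tangent process $\nabla^t_\xr Y^{t,\xr}_t$, the $\alpha$-H\"older-in-space estimate via stability of the linear variational BSDE under Assumption~\ref{eq:BSDE_FrechetCond}.$\mathrm{(iv)}$, and, for part~$\mathrm{(ii)}$, the $\Cb^{0,1}$-functional It\^o formula of \cite{bouchard2021c} (you are right that \cite{cont2013functional} cannot be invoked since $\nabla^2_\xr\vr$ is not available) together with uniqueness of the co-quadratic variation to identify $Z^{t,\xr}=[\nabla_\xr\vr\,\sigma](\cdot,X^{t,\xr})$.

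One point to correct in your time-regularity sketch: your step~(i), comparing $\nabla_\xr\vr(t',X^{t,\xr}_{t'\wedge})$ with $\nabla_\xr\vr(t',\xr_{t\wedge})$, does not fit here, since the former is a random quantity and there is no relation of the form $\nabla_\xr\vr(t,\xr)=\E[\nabla_\xr\vr(t',X^{t,\xr}_{t'\wedge})]$ (the tangent BSDE has a nontrivial driver). The paper does not estimate the increment of $\nabla Y$ directly; it uses precisely the finite-difference interpolation trick you cite from Theorem~\ref{thm: C1alpha avec pji}, applied to $\vr$ itself: approximate each vertical derivative by $h^{-1}[\vr(\cdot,\xr\oplus h)-\vr(\cdot,\xr)]$ with error $O(h^\alpha)$ from the space-H\"older bound, then control the resulting differences of $\vr$ by the $\tfrac12$-H\"older-in-time estimate \eqref{eq:EstimationContinuityXYZ} over an interval of length $h^{2+2\alpha}$, and optimize in $h$.
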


	We split the proof in different steps. 
	First, we will use the probabilistic representation of the PPDE (with generator $F$ in \eqref{eq:BSDE_F})
	in terms of forward-backward SDE to deduce the regularity of the $\pi$-approximate viscosity solution. 
	Namely, under  Assumption \ref{eq:BSDE_FrechetCond}, it follows from  Proposition \ref{prop : stoch representation} that the approximate viscosity solution $\vr$ is given by
	\begin{equation} \label{eq:vr2Y}
		\vr(t, \xr) ~=~ Y^{t, \xr}_t,
	\end{equation}
	where  $(Y^{t, \xr}, Z^{t, \xr}) \in S^2([t,T] )\x H^2([t,T])$ solves the BSDE
	$$
		Y^{t, \xr}_s = g \big( X^{t,\xr} \big)  + \int_{t\vee s}^{T} f(u, X^{t,\xr}, Y^{t,\xr}_{u}, Z^{t,\xr}_{u} ) du - \int_{t\vee s}^{T} Z^{t,\xr}_{u}\cdot dW_{u},
		~~s \in [t,T].
	$$
	Next, let us fix a path $\hat \xr \in \DT$, and then define a process $\big(\nabla^{t}_{\xr} X^{t,\xr}_{s}\big)_{s \in [0,T]}$ as the solution to the SDE
	$$
		\nabla^{t}_{\xr} X^{t,\xr}_{s}
		=
		\hat \xr_{s\wedge t}
		+
		\int_{t}^{s\vee t} \int_{t}^{u} \nabla^{t}_{\xr}X^{t,\xr}_{r} \lambda_{\mu}(dr;u, X^{t,\xr})du+\int_{t}^{s}\int_{t}^{u}\nabla^{t}_{\xr}  X^{t,\xr}_{r}\lambda_{\sigma}(dr;u, X^{t,\xr})dW_{u}.
	$$
	Namely, $\nabla^{t}_{\xr} X^{t,\xr}_{s}$ is the tangent process of $X^{t,\xr}$ in the direction of $\hat \xr$.
	We also define the processes $\big(\nabla^{t}_{\xr} Y^{t,\xr}_{s}, \nabla^{t}_{\xr} Z^{t,\xr}_{s} \big)_{s \in [t, T]}$ as the solution to the linear BSDE
	\begin{align*}
		\nabla^{t}_{\xr} Y^{t,\xr}_{s}
		=&
		\int_{0}^{T} \!\! \nabla^{t}_{\xr} X^{t,\xr}_{r}\lambda_{g}(dr;  X^{t,\xr})
		+
		\!\!\int_{s}^{T} \!\!\! \Big( \!\!
			\int_{t}^{u}\!\!  \nabla^{t}_{\xr} X^{t,\xr}_{r}  \lambda_{f}(dr; \Theta^{t,\xr}_{u}) 
			+
			A^{t,\xr}_{u}\cdot(\nabla^{t}_{\xr} Y^{t,\xr}_{u},\nabla^{t}_{\xr} Z^{t,\xr}_{u})
		\Big) du\\
		&-
		\int_{s}^{T}\nabla^{t}_{\xr} Z^{t,\xr}_{u}dW_{u},
	\end{align*}
	in which 
	\begin{equation} \label{eq:defA}
		\Theta^{t,\xr}_{s} := \big( s, X^{t,\xr},Y^{t,\xr}_{s} ,Z^{t,\xr}_{s} \big) ,
		~~\mbox{and}~
		A^{t,\xr}_s := \big( \partial_{y} f (\Theta^{t,\xr}_{s}), \partial_{z} f  (\Theta^{t,\xr}_{s}) \big).
	\end{equation}

	Under Assumption \ref{eq:BSDE_FrechetCond}, and by standard arguments (based on Burkholder-Davis-Gundy's inequality and Gronwall's lemma, see e.g. \cite{el1997backward}), 
	one has the following estimates on the processes $ \big(X^{t, \xr}, Y^{t, \xr}, Z^{t,\xr} \big)$ and $\big( \nabla^{t}_{\xr} X^{t,\xr}, \nabla^{t}_{\xr} Y^{t,\xr}, \nabla^{t}_{\xr} Z^{t,\xr} \big)$,
	whose proof is ommited.
	
	\begin{Lemma}
		Let Assumption \ref{eq:BSDE_FrechetCond} hold true.
		Then, for all $p \ge 1$, there exists a constant $C_p > 0$, such that, for all $t, t' \in [0,T]$ and $\xr, \xr' \in \DT$,
		\begin{align} \label{eq:EstimationXYZ}
			\E \big[ \|X^{t,\xr}\|^{p} \big]
			+ \sup_{s \in [0,T]} \E \big[ \big| Y^{t,\xr}_s \big|^{p} \big] 
			+ \E \Big[ \Big( \int_{0}^{T}|Z^{t,\xr}_{s}|^{2} \Big)^{\frac{p}2}ds \Big]
			~\le~
			C_{p}  \big(1+\|\xr_{t \wedge}\|^{p} \big),
		\end{align}
		\begin{align} \label{eq:EstimationContinuityXYZ}
			&
			\E \Big[ \big\|X^{t,\xr}-X^{t',\xr'} \big\|^{p} \Big]^{\frac1p} 
			+ \E \Big[ \big\| Y^{t,\xr}- Y^{t',\xr'} \big\|^{p} \Big]^{\frac1p} 
			+ \E \Big[ \Big(\int_{0}^{T} \big|Z^{t,\xr}_{s}-Z^{t',\xr'}_{s} \big|^{2}ds \Big)^{\frac{p}2} \Big]^{\frac1p} 
			\nonumber \\
			\le~&
			C_{p}  \Big( \int_{0}^{t\vee t'}|\xr_{s}-\xr'_{s}|\lambda(ds) + (1+\|\xr\|+\|\xr'\|) |t-t'|^{\frac12} \Big),
		\end{align}
		and
		\begin{align} \label{eq:EstimationDXYZ} 
			\E \Big [ \big\| \nabla^{t}_{\xr}X^{t,\xr} \big\|^{p} \Big]^{\frac1p}
			+
			\E \Big[ \big\| \nabla^{t}_{\xr}Y^{t,\xr} \big\|^{p} \Big]^{\frac1p}
			+
			\E \Big[ \Big(\int_{0}^{T} \big|\nabla^{t}_{\xr}Z^{t,\xr}_{s} \big|^{{2}} ds \Big)^{\frac{p}2} \Big]
			\le C_{p} .
		\end{align}
	\end{Lemma}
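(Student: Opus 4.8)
The three displays are the standard a priori, stability, and variational estimates for the decoupled forward--backward SDE defining $\vr$; the plan is to reduce each of them to a classical Burkholder--Davis--Gundy plus Gronwall argument, the only non-routine point being the translation of Assumption \ref{eq:BSDE_FrechetCond} into pointwise bounds on the coefficients. First I would record the consequences of Fr\'echet differentiability: applying the mean value inequality along the segment joining $\xr'$ to $\xr$ and using Assumption \ref{eq:BSDE_FrechetCond}(iii), one gets
$$
	\big| \mu(s,\xr)-\mu(s,\xr') \big| + \big| \sigma(s,\xr)-\sigma(s,\xr') \big| + \big| g(\xr)-g(\xr') \big|
	~\le~ C \int_0^T \big| \xr_r - \xr'_r \big| \, \lambda(dr),
$$
and the analogous bound for $\xr \longmapsto f(s,\xr,y,z)$, uniformly in $(s,y,z)$; combined with the continuity of $(\mu,\sigma,f,g)$ on the compact time interval and the boundedness of $(\partial_y f, \partial_z f)$, this also yields the linear growth bounds $|\mu(s,\xr)|+|\sigma(s,\xr)|+|f(s,\xr,0,0)|+|g(\xr)| \le C(1+\|\xr\|)$ and the global Lipschitz property of $f$ in $(y,z)$.

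Given these, \eqref{eq:EstimationXYZ} follows in two steps. Applying It\^o's formula to $s \longmapsto |X^{t,\xr}_s - \xr_t|^p$ on $[t,T]$, together with the Burkholder--Davis--Gundy inequality, the linear growth of $(\mu,\sigma)$, and Gronwall's lemma, gives $\E[\|X^{t,\xr}\|^p] \le C_p(1+\|\xr_{t\wedge}\|^p)$. Writing then the generator as $f(u,X^{t,\xr},Y^{t,\xr}_u,Z^{t,\xr}_u) = f(u,X^{t,\xr},0,0) + a_u Y^{t,\xr}_u + b_u\cdot Z^{t,\xr}_u$ with $|a|,|b| \le C$, the classical linear BSDE estimate (It\^o on $|Y^{t,\xr}|^p$, BDG, Gronwall, or a change of measure removing $b$) bounds $\sup_s \E[|Y^{t,\xr}_s|^p] + \E[(\int_0^T |Z^{t,\xr}_s|^2 ds)^{p/2}]$ by $C_p \E[|g(X^{t,\xr})|^p + (\int_t^T |f(u,X^{t,\xr},0,0)| du)^p]$, which is $\le C_p(1+\|\xr_{t\wedge}\|^p)$ by the growth bounds above.

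For \eqref{eq:EstimationContinuityXYZ}, I would argue essentially as in the standard stability theory for SDEs, distinguishing the regime $[0,t\wedge t']$ where the two forward paths are frozen, the short interval $[t\wedge t', t\vee t']$ where only one of them has started to diffuse (which contributes the term $(1+\|\xr\|+\|\xr'\|)|t-t'|^{1/2}$ via the moment bound $\E[\sup_{t\wedge t' \le u \le t\vee t'} |X^{t,\xr}_u - X^{t,\xr}_{t\wedge t'}|^p] \le C_p(1+\|\xr\|)^p|t-t'|^{p/2}$), and $[t\vee t',T]$ where It\^o, BDG, Gronwall and the $\lambda$-weighted Lipschitz bound for $(\mu,\sigma)$ close the estimate with the term $\int_0^{t\vee t'} |\xr_r-\xr'_r|\lambda(dr)$. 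The corresponding bound for $(Y^{t,\xr},Z^{t,\xr})$ is then the stability estimate for BSDEs: the difference solves a linear BSDE with bounded driver coefficients, terminal value $g(X^{t,\xr}) - g(X^{t',\xr'})$, and inhomogeneity $f(u,X^{t,\xr},Y^{t',\xr'}_u,Z^{t',\xr'}_u) - f(u,X^{t',\xr'},Y^{t',\xr'}_u,Z^{t',\xr'}_u)$, all controlled in $L^p$ by the forward estimate just obtained together with \eqref{eq:EstimationXYZ}.

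Finally, \eqref{eq:EstimationDXYZ} is a linear estimate. The tangent process $\nabla^t_\xr X^{t,\xr}$ solves a linear SDE whose coefficients are the finite measures $\lambda_\mu(\cdot;u,X^{t,\xr})$ and $\lambda_\sigma(\cdot;u,X^{t,\xr})$, each dominated by $\lambda$ by Assumption \ref{eq:BSDE_FrechetCond}(iii); since $\lambda([0,T]) < \infty$, taking the supremum, using BDG and Gronwall (the Gronwall datum being $\lambda([t,\cdot])$) gives $\E[\|\nabla^t_\xr X^{t,\xr}\|^p] \le C_p(1+\|\hat\xr\|^p)$, which is a constant since $\hat\xr$ is fixed. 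Then $(\nabla^t_\xr Y^{t,\xr},\nabla^t_\xr Z^{t,\xr})$ solves a linear BSDE with bounded driver coefficients $A^{t,\xr} = (\partial_y f, \partial_z f)(\Theta^{t,\xr})$, terminal datum $\int_0^T \nabla^t_\xr X^{t,\xr}_r \lambda_g(dr;X^{t,\xr})$ and inhomogeneity $\int_t^u \nabla^t_\xr X^{t,\xr}_r \lambda_f(dr;\Theta^{t,\xr}_u)$, both bounded in $L^p$ by $C\lambda([0,T])\E[\|\nabla^t_\xr X^{t,\xr}\|^p]^{1/p}$, so the standard linear BSDE estimate yields the claim. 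The main delicate point throughout is this last reduction together with the continuity estimate: one must check that Fr\'echet differentiability with derivative dominated by $\lambda$ indeed produces the $\lambda$-weighted Lipschitz and growth bounds used above, and that the linear equations for the tangent processes, whose ``coefficients'' are $\lambda$-absolutely continuous measures, are well posed and obey the stated moment bounds.
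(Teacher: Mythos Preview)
Your proposal is correct and follows precisely the route the paper indicates: the paper omits the proof entirely, stating only that the estimates follow ``by standard arguments (based on Burkholder--Davis--Gundy's inequality and Gronwall's lemma, see e.g.~\cite{el1997backward})'', and your sketch is a faithful expansion of exactly that. Your closing remark is also well placed: the reduction of Assumption~\ref{eq:BSDE_FrechetCond}(iii) to a uniform $\lambda$-weighted Lipschitz bound tacitly reads $|\lambda_\mu(\cdot;t,\xr)| \ll \lambda(\cdot)$ as a total-variation domination (with a uniform constant), not merely absolute continuity, which is indeed what the paper's subsequent computations require.
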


	Next, for each $\delta > 0$, we denote
	$$
		\big( \nabla^{t, \delta}_{\xr} X^{t,\xr}_{s}, \nabla^{t, \delta}_{\xr} Y^{t,\xr}, \nabla^{t, \delta}_{\xr} Z^{t,\xr} \big)
		:=
		\delta^{-1} \Big(  X^{t,\xr+\delta \hat \xr}_{s}-X^{t,\xr}_{s},  Y^{t,\xr+\delta \hat \xr}-Y^{t,\xr} ,  Z^{t,\xr+\delta \hat \xr}-Z^{t,\xr} \Big).
	$$

	\begin{Lemma} \label{lem:XY_tangent}
		Let Assumption \ref{eq:BSDE_FrechetCond} hold true.
		Then, for all $(t,\xr)\in [0,T]\x \CT$,
		\begin{equation} \label{eq:nabla_delta_XY_cvg}
			\lim_{\delta \searrow 0} 
			\E \Big[ \big\| \nabla^{t, \delta}_{\xr} X^{t,\xr}  -  \nabla^{t}_{\xr} X^{t,\xr} \big\|^2 \Big]
			=0
			~~\mbox{and}~~
			\lim_{\delta \searrow 0}
			\Big| \nabla^{t, \delta}_{\xr} Y^{t,\xr}_t - \nabla^{t}_{\xr} Y^{t,\xr}_t \Big|
			= 0.
		\end{equation}
	\end{Lemma}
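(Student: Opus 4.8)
The plan is to exhibit, for each fixed $\delta>0$, linear stochastic equations solved by the difference quotients $\nabla^{t,\delta}_{\xr}X^{t,\xr}$ and $(\nabla^{t,\delta}_{\xr}Y^{t,\xr},\nabla^{t,\delta}_{\xr}Z^{t,\xr})$, and to compare them with the linear equations defining the tangent processes $\nabla^{t}_{\xr}X^{t,\xr}$ and $(\nabla^{t}_{\xr}Y^{t,\xr},\nabla^{t}_{\xr}Z^{t,\xr})$. First I would treat the forward component. Write $h^{\delta}:=X^{t,\xr+\delta\hat\xr}-X^{t,\xr}$ and, along the segment $X^{t,\xr,\theta,\delta}:=X^{t,\xr}+\theta h^{\delta}$, $\theta\in[0,1]$, use the fundamental theorem of calculus together with the Fr\'echet differentiability of $\mu,\sigma$ from Assumption \ref{eq:BSDE_FrechetCond} to get
$$
\mu(u,X^{t,\xr+\delta\hat\xr})-\mu(u,X^{t,\xr}) ~=~ \int_0^1\int_0^T h^{\delta}_r\,\lambda_{\mu}\big(dr;u,X^{t,\xr,\theta,\delta}\big)\,d\theta ,
$$
and likewise for $\sigma$. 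Dividing by $\delta$ shows that $\nabla^{t,\delta}_{\xr}X^{t,\xr}$ solves the same linear SDE as $\nabla^{t}_{\xr}X^{t,\xr}$ but with the coefficients $\lambda_{\mu},\lambda_{\sigma}$ evaluated at the random path $X^{t,\xr,\theta,\delta}$ and averaged over $\theta$, rather than at $X^{t,\xr}$. Subtracting the two equations, applying It\^o's formula to the squared difference, and bounding the mismatch of coefficients by $C_{\alpha}\|h^{\delta}\|^{\alpha}\|\nabla^{t}_{\xr}X^{t,\xr}\|$ via Assumption \ref{eq:BSDE_FrechetCond}(iv), one controls the error using \eqref{eq:EstimationContinuityXYZ} (which gives $\E[\|h^{\delta}\|^{p}]^{1/p}=O(\delta)$) and the all-order moment bounds \eqref{eq:EstimationDXYZ} on $\nabla^{t}_{\xr}X^{t,\xr}$; splitting the product $\|h^{\delta}\|^{\alpha}\|\nabla^{t}_{\xr}X^{t,\xr}\|$ by H\"older's inequality with exponents $1/\alpha$ and $1/(1-\alpha)$ and invoking Gronwall's lemma yields the first assertion of \eqref{eq:nabla_delta_XY_cvg}.

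For the backward component I would argue analogously. Decompose
$$
f(u,X^{t,\xr+\delta\hat\xr},Y^{t,\xr+\delta\hat\xr}_u,Z^{t,\xr+\delta\hat\xr}_u)-f(u,X^{t,\xr},Y^{t,\xr}_u,Z^{t,\xr}_u)
$$
into a path increment, treated by the fundamental theorem of calculus and the Fr\'echet derivative $\lambda_f$ along the $X$-segment, plus a $(y,z)$ increment, treated via $\partial_yf,\partial_zf$ along the corresponding $(Y,Z)$-segment, and treat $g(X^{t,\xr+\delta\hat\xr})-g(X^{t,\xr})$ in the same way with $\lambda_g$. Dividing by $\delta$ shows $(\nabla^{t,\delta}_{\xr}Y^{t,\xr},\nabla^{t,\delta}_{\xr}Z^{t,\xr})$ solves a linear BSDE with bounded coefficients whose terminal datum and free term converge in $L^2$ to those of the linear BSDE defining $(\nabla^{t}_{\xr}Y^{t,\xr},\nabla^{t}_{\xr}Z^{t,\xr})$: convergence of the terminal datum follows from the Fr\'echet differentiability of $g$, Assumption \ref{eq:BSDE_FrechetCond}(iv), and Step~1; convergence of the free term follows from Step~1 together with Assumption \ref{eq:BSDE_FrechetCond}(iv) and the continuity estimate \eqref{eq:EstimationContinuityXYZ} applied to $Y^{t,\xr},Z^{t,\xr}$. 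The standard stability estimate for linear BSDEs with bounded coefficients (see e.g.~\cite{el1997backward}) combined with Gronwall's lemma then gives
$$
\E\Big[\sup_{s\in[t,T]}\big|\nabla^{t,\delta}_{\xr}Y^{t,\xr}_s-\nabla^{t}_{\xr}Y^{t,\xr}_s\big|^2\Big]+\E\Big[\int_t^T\big|\nabla^{t,\delta}_{\xr}Z^{t,\xr}_u-\nabla^{t}_{\xr}Z^{t,\xr}_u\big|^2\,du\Big]~\longrightarrow~0 ,
$$
whence $\nabla^{t,\delta}_{\xr}Y^{t,\xr}_t\to\nabla^{t}_{\xr}Y^{t,\xr}_t$, the second assertion of \eqref{eq:nabla_delta_XY_cvg}.

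The only delicate point, and the one I expect to be the main obstacle, is that the linear coefficients appearing in the equations for the difference quotients are the Fr\'echet derivatives (and $\partial_y f,\partial_z f$) evaluated not at $X^{t,\xr},Y^{t,\xr},Z^{t,\xr}$ but at the random interpolating paths $X^{t,\xr}+\theta h^{\delta}$ (resp.~the corresponding interpolations of $Y$ and $Z$), so one must show this mismatch vanishes in $L^2$ as $\delta\searrow0$. This is exactly where the H\"older continuity of Assumption \ref{eq:BSDE_FrechetCond}(iv) enters, in conjunction with the $O(\delta)$ bound on $\|h^{\delta}\|$ from \eqref{eq:EstimationContinuityXYZ} and the all-order moment bounds \eqref{eq:EstimationDXYZ} on the tangent processes; the latter are precisely what make it possible to estimate the product of $\|h^{\delta}\|^{\alpha}$ with the a priori unbounded tangent-process factor, through H\"older's inequality with conjugate exponents $1/\alpha$ and $1/(1-\alpha)$ when $\alpha<1$ (and by Cauchy--Schwarz when $\alpha=1$). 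Everything else reduces to routine applications of It\^o's formula, the Burkholder--Davis--Gundy inequality, and Gronwall's lemma.
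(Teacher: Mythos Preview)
Your proposal is correct and follows essentially the same approach as the paper: both represent the difference quotients as solutions to linear forward/backward equations with perturbed coefficients that converge to those of the tangent equations, and then conclude by It\^o's formula plus Gronwall for the forward part and a BSDE stability estimate (the paper cites \cite{el1997backward} and \cite{bouchard2018unified}) for the backward part. Your write-up is in fact a bit more explicit than the paper's about the mechanism for constructing the perturbed coefficients (fundamental theorem of calculus along the segment $X^{t,\xr}+\theta h^{\delta}$) and about how Assumption~\ref{eq:BSDE_FrechetCond}(iv) combined with the higher-moment bounds \eqref{eq:EstimationDXYZ} is used to control the mismatch.
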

	\proof 
	For ease of notation, let us omit the superscript $(t, \xr)$ in the notations of the processes $\big(X^{t, \xr+\delta \hat \xr}, X^{t, \xr}, \nabla^{t, \delta}_{\xr} X^{t,\xr}, \nabla^{t}_{\xr} X^{t,\xr} \big)$ and write them as $(X^{\delta}, X, \nabla^{t, \delta}_{\xr} X, \nabla^{t}_{\xr} X)$.
	
	\vspace{0.5em}
	
	\noindent $\mathrm{(i)}$ 
	We first notice that $(\nabla^{t, \delta}_{\xr} X_s)_{s \in [t,T]} $ satisfies
	\begin{align*}
		\nabla^{t, \delta}_{\xr} X_s 
		&=
		I_{d}
		+ 
		\int_{t}^{s} \!\!\big( \mu(u, X^{\delta}) - \mu(u, X)\big) du 
		+
		\int_{t}^{t \vee s} \!\! \big( \sigma(u, X^{\delta}) - \sigma(u, X)\big) dW_{u} \\
		&=
		I_{d}
		+
		\int_{t}^{s} \int_{t}^{u} \big( \nabla^{t,\delta}_{\xr}  X_{r} \big) \lambda_{\mu}^{\delta} (dr;u)du
		+
		\int_{t}^{s}\int_{t}^{u} \big( \nabla^{t, \delta}_{\xr}  X_{r} \big) \lambda_{\sigma}^{\delta}(dr;u)dW_{u},
	\end{align*}
	where {$I_{d}$ is the identity matrix and}, as $\delta \searrow 0$,
	\begin{equation} \label{eq:lambda_delta_cvg}
		\big(\lambda_{\mu}^{\delta}(\cdot, u), \lambda^{\delta}_{\sigma}(\cdot, u) \big) 
		\longrightarrow
		\big(\lambda_{\mu}(dr;u), \lambda_{\sigma}(dr;u) \big)
		:=
		\big(\lambda_{\mu}(dr;u, X), \lambda_{\sigma}(dr;u, X) \big).
	\end{equation}
	Applying It\^o formula on $\big( \nabla^{t, \delta}_{\xr} X_s  - \nabla^{t}_{\xr} X_s \big)^2$ and then taking expectation,
	it follows by standard arguments that, for some constant $C > 0$ independent of $\delta$, and a constant $C_{\delta} > 0$,
	$$
		\E \Big[ \big\| \nabla^{t,\delta}_{\xr} X_{s \wedge} - \nabla^{t}_{\xr} X_{s \wedge} \big\|^2 \Big]
		~\le~
		C \int_t^s \E \Big[ \big\| \nabla^{t,\delta}_{\xr} X_{r \wedge} - \nabla^{t}_{\xr} X_{r \wedge} \big\|^2 \Big] dr + C_{\delta},
	$$
	where, by the moment estimation in \eqref{eq:EstimationXYZ} and the convergence in \eqref{eq:lambda_delta_cvg},
	\begin{align*}
		C_{\delta} 
		~\le~&
		C \int_t^T  \E \Big[ \big|\nabla_{\xr}^{t, \delta} X_u - \nabla_{\xr}^{t} X_u \big|
						\Big| \int_t^T |\nabla_{\xr}^{t} X_r | \big( \lambda^{\delta}_{\mu} -\lambda_{\mu} \big) \Big| (dr, u) 
			\Big] du \\
		&+~
		C \int_t^T \E \Big[ 
				\Big( \int_t^T \nabla^t_{\xr} X_r  \big( \lambda^{\delta}_{\mu} -\lambda_{\mu} \big)(dr, u) \Big)^2
			\Big] du
		~\longrightarrow~ 0.
	\end{align*}
	By Gronwall's lemma, it follows that the first convergence result in \eqref{eq:nabla_delta_XY_cvg} holds true.
	
	\vspace{0.5em}
	
	\noindent $\mathrm{(ii)}$ We notice that $\big(\nabla^{t, \delta}_{\xr} Y, \nabla^{t, \delta}_{\xr} Z \big)$ satisfies
	\begin{align*}
		\nabla^{t, \delta}_{\xr} Y_s 
		=~&
		g ( X^{\delta}) - g(X)  + \int_{s}^{T}\!\! \Big( f(u, X^{\delta}, Y^{\delta}_{u}, Z^{\delta}_{u} ) - f(u, X, Y_{u}, Z_{u} ) \Big) du 
		- 
		\int_{s}^{T}\!\! \nabla^{t, \delta}_{\xr} Z_{u}dW_{u} \\
		=~&
		\int_{t}^{T} \!\! \big( \nabla^{t,\delta}_{\xr} X_{r} \big) ~\lambda^{\delta}_{g}(dr)
		+
		\int_{s}^{T} \!\!
			\Big(\!
				\int_{t}^{u} \!\! \big( \nabla^{t,\delta}_{\xr} X_r \big)  \lambda^\delta_{f}(dr; {u}) 
				+ A^{\delta}_u \cdot \big( \nabla^{t,\delta}_{\xr} Y_{u}, \nabla^{t,\delta}_{\xr} Z_{u} \big)
			\Big) du \\
		& -
		\int_{s}^{T} \nabla^{t,\delta}_{\xr} Z_{u}dW_{u},
	\end{align*}
	where 
	$$
		\big(\lambda_{f}^{\delta}(\cdot, u), \lambda^{\delta}_{g}(\cdot) \big) 
		\longrightarrow
		\big(\lambda_{f}(dr;u), \lambda_{g}(dr) \big)
		:=
		\big(\lambda_{f}(dr;u, X, Y, Z), \lambda_{g}(dr; X) \big),
		~\mbox{as}~ \delta \searrow 0,
	$$
	and
	$$
		 A^{\delta}_u \cdot \big( \nabla^{t,\delta}_{\xr} Y_{u}, \nabla^{t,\delta}_{\xr} Z_{u} \big)
		~:=~
		\delta^{-1} \Big( f\big(u, X^{\delta}, Y^{\delta}_{u}, Z^{\delta}_{u} \big) - f \big(u, X^{\delta}, Y, Z \big)  \Big).
	$$
	Recalling \eqref{eq:defA}, it follows that, for all $p \ge 1$,
	$$
		\E \Big[ \int_t^T | A^{\delta}_u - A_u |^p du  \Big]
		~\longrightarrow~
		0,
		~\mbox{as}~ \delta \searrow 0.
	$$
	One can then conclude by the stability result for BSDEs (see e.g.   \cite[Proposition 2.1]{el1997backward}
	or  \cite[Theorem 4.1 and Remark 4.1]{bouchard2018unified}).
 	\qed

	\begin{proof}[Proof of Theorem \ref{thm : regul BSDE}]
	$\mathrm{(i)}$ Let us first fix $\hat \xr := \mathbf{1}_{\{[t,T]\}}$.
	By \eqref{eq:vr2Y} {and \eqref{eq:EstimationContinuityXYZ}}, we observe that {$ \vr$} is locally $1/2$-H\"older in time and Lipschitz in space.
	{Moreover}, by Lemma \ref{lem:XY_tangent}, $\nabla_{\xr} \vr(t, \xr) = \nabla^{t}_{\xr} Y^{t,\xr}_t$.
	
	\vspace{0.5em}

	Thus, it is enough to study the regularity of $\nabla^{t}_{\xr} Y^{t,\xr}_t$ in $(t, \xr)$.
	By direct computation, one has {(using the notation \eqref{eq:defA})}
	\begin{align*}
		\Delta(t,\xr,\xr')
		~:=~&
		\big| \nabla^{t}_{\xr} Y^{t,\xr}_{t}- \nabla^{t}_{\xr} Y^{t,\xr'}_{t} \big|^{2} \\
		~\le~& 
		\E \Big [ \Big|\int_{t}^{T} \nabla^{t}_{\xr} X^{t,\xr}_{r}\lambda_{g}(dr;  X^{t,\xr}) -\int_{t}^{T} \nabla^{t}_{\xr} X^{t,\xr'}_{r}\lambda_{g}(dr;  X^{t,\xr'}) \Big|^{2} \Big]\\
		&+~
		\E \Big [\int_{t}^{T} \Big|\int_{t}^{s} \nabla^{t}_{\xr} X^{t,\xr}_{r}  \lambda_{f}(dr;\Theta^{t,\xr}_{s})-\int_{t}^{s} \nabla^{t}_{\xr} X^{t,\xr'}_{r}  \lambda_{f}(dr;\Theta^{t,\xr'}_{s}) \Big|^{2} ds \Big]\\
		&+ ~
		\E \Big[\int_{t}^{T} \Big|(A^{t,\xr}_{s}- A^{t,\xr'}_{s})\cdot(\nabla^{t}_{\xr} Y^{t,\xr}_{s},\nabla^{t}_{\xr} Z^{t,\xr}_{s})   \Big|^{2} ds \Big].
	 \end{align*}	
	Further, using Assumption \ref{eq:BSDE_FrechetCond} and   Grownwall's lemma, one can find a constant $C >0$ such that
	\begin{align*}
		\E \Big[
			\big\| \nabla^{t}_{\xr} X^{t,\xr}-\nabla^{t}_{\xr} X^{t,\xr'} \big\|^{2} 
		\Big]^{\frac12}
		+
		\Delta(t,\xr,\xr')^{\frac12}
		~\le~
		C \|\xr-\xr'\|^{\alpha}.
	\end{align*} 
	It follows that $\nabla_{\xr}\vr$ is $\alpha$-H\"older in space.
	
	\vspace{0.5em}
	
	We now repeat the argument used at the end of the proof of Theorem \ref{thm: C1alpha avec pji}. In view of \eqref{eq:EstimationContinuityXYZ}, we can find $C, C'$ such that 
	\begin{align*}
		& \big| \nabla_{\xr}\vr(t+h^{2+2\alpha},   \xr_{t \wedge } ) - \nabla_{\xr}\vr(t,  \xr, \xr_{t}) \big| \\
		\le~& h^{-1} \big|\vr(t+h^{2+2\alpha}, \xr_{t \wedge }+\1_{\{t+h^{2+2\alpha}\}}h) - \vr(t+h^{2+2\alpha}, \xr_{t \wedge }  ) - \vr(t, \xr  \oplus_{t} h)+ \vr(t, \xr ) \big|
			+2Ch^{\alpha}\\
		\le~&  h^{-1} \big| \vr(t+h^{2+2\alpha},( \xr + \1_{\{t\}} h)_{t \wedge })- \vr(t, \xr \oplus_{t} h) \big|
		+  h^{-1} \big| \vr(t, \xr)- \vr(t+h^{2+2\alpha}, \xr_{t \wedge }  ) \big|\\
 		&\;\;+ h^{-1} \big| \vr(t+h^{2+2\alpha}, \xr_{t \wedge }+\1_{\{t+h^{2+2\alpha}\}}h)- \vr(t+h^{2+2\alpha},( \xr  +\1_{\{t\}} h)_{t \wedge }) \big|
		+2Ch^{\alpha}\\
		\le~& C'\left(h^{\alpha}(1+\|\xr\|+h)+\lambda([t,t+h)) \right),
	\end{align*} 
	for all $\xr \in D([0,T])$ and $t< t+h\le T$. 

	\vspace{0.5em}

	\noindent $\mathrm{(ii)}$ Assume now that $\lambda$ has finitely many atoms at times $(s_{i})_{i\le I}\subset [0,T]$ with $s_{i}<s_{i+1}$ for $i+1\le I$. Then,  \cite[Theorem 2.5]{bouchard2021c} can be applied on each time interval $[s_{i},s_{i+1})$. Since $\vr(\cdot,X^{t,\xr})$ and $\int_{t}^{\cdot} [\nabla_{\xr} \vr\sigma](s,X^{t,\xr})dW_{s}$ are continuous, this implies that we can find a continuous  (and therefore predictable) orthogonal process $A$, see \cite[Definition 2.3]{bouchard2021c} such that 
$$
\vr(\cdot,X^{t,\xr})=\vr(t,X^{t,\xr})+\int_{t}^{\cdot} [\nabla_{\xr} \vr\sigma](s,X^{t,\xr})dW_{s}+A\;.
$$
In particular $[A,W]=0$.   Thus, the fact that $Y^{t,\xr}=\vr(\cdot,X^{t,\xr})$ implies that $(Z^{t,\xr})^{\top}= [\nabla_{\xr} \vr\sigma](\cdot,X^{t,\xr})$ $dt\times d\P$ on $[t,T]$ by uniqueness of the co-quadratic variation of   semimartingales.
 	\end{proof}

\bibliographystyle{plain}
\def\cprime{$'$} \def\cprime{$'$}

\end{document}